\newtheorem{proposition}{Proposition}[section]
\newtheorem{theorem}[proposition]{Theorem}
\newtheorem{corollary}[proposition]{Corollary}
\newtheorem{lemma}[proposition]{Lemma}
\theoremstyle{definition}
\newtheorem{remark}[proposition]{Remark}
\newcommand{\calO}{\mathcal{O}}
\newcommand{\R}{\mathbb{R}}
\newcommand{\C}{\mathbb{C}}
\newcommand{\F}{\mathbb{F}}
\renewcommand{\P}{\mathbb{P}}
\newcommand{\co}{\mathcal{O}}
\newcommand{\Cl}{{\text {\rm Cl}}}
\newcommand{\Tr}{{\text {\rm Tr}}}
\newcommand{\Q}{\mathbb{Q}}
\newcommand{\Z}{\mathbb{Z}}
\newcommand{\SL}{{\text {\rm SL}}}
\newcommand{\GL}{{\text {\rm GL}}}
\newcommand{\textmod}{{\text {\rm mod}}}
\newcommand{\Disc}{{\text {\rm Disc}}}
\newcommand{\Stab}{{\text {\rm Stab}}}
\newcommand{\ct}{{\text {\rm ct}}}
\newcommand{\Res}{\textnormal{Res}}
\newcommand{\calS}{\mathcal{S}}
\newcommand{\Aut}{\textnormal{Aut}}
\newcommand{\textdivblahblah}{\textnormal{div}}
\newcommand{\textnmax}{\textnormal{nmax}}
\newcommand{\twtw}[4]{\left(\begin{smallmatrix}#1&#2\\#3&#4\\\end{smallmatrix}\right)}
\begin{document}
\title[Improved Davenport--Heilbronn]{Improved Error Estimates for the Davenport--Heilbronn Theorems}

\author{Manjul Bhargava}
\address{Department of Mathematics, Princeton University, Princeton, NJ, USA}
\email{bhargava@math.princeton.edu}

\author{Takashi Taniguchi}
\address{Department of Mathematics, Graduate School of Science, Kobe University, Kobe, Japan}
\email{tani@math.kobe-u.ac.jp}

\author{Frank Thorne}
\address{Department of Mathematics, University of South Carolina, Columbia, SC, USA}
\email{thorne@math.sc.edu}

\begin{abstract}
We improve the error terms in the Davenport--Heilbronn theorems on counting cubic fields to $O(X^{2/3 +\epsilon})$.
This improves on separate and independent results of the authors and Shankar and Tsimerman~\cite{BST, TT_rc}.
The present paper uses the analytic theory of Shintani zeta functions, and streamlines and simplifies the proof
relative to \cite{TT_rc}. We also give a second proof
that uses a ``discriminant-reducing identity'' from \cite{BST} and translates it into the language of zeta functions.
We also provide a version of our theorem that counts cubic fields satisfying an arbitrary finite set of local conditions, or even suitable infinite sets of local conditions, where the dependence of the error term
on these conditions is described explicitly and significantly improves \cite{BST, TT_rc}. 
As we explain, these results lead to 
quantitative improvements in various arithmetic applications.
\end{abstract}

\maketitle

\vspace{-.3in}
\section{Introduction}
The purpose of this paper is to improve the error terms in the Davenport--Heilbronn theorems to $O(X^{2/3 + \epsilon})$:

\begin{theorem}\label{thm_rc}
Let $N_3^{\pm}(X)$ denote the number of isomorphism classes of cubic fields $F$ satisfying $0 < \pm \Disc(F) < X$. Then 
\begin{equation}\label{eqn_main_thm}
N_3^{\pm} (X) = C^{\pm} \frac{1}{12 \zeta(3)} X + K^{\pm} 
\frac{4 \zeta(1/3)}{5 \Gamma(2/3)^3 \zeta(5/3)} X^{5/6} + O(X^{2/3} (\log X)^{2.09}),
\end{equation}
where $C^+ = 1$, $C^- = 3$, 
$K^+ = 1$, 
and 
$K^- = \sqrt{3}$.
\end{theorem}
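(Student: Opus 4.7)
The plan is to attack the count via the Shintani zeta functions
\[
\xi^{\pm}(s) \;=\; \sum_{x \in \GL_2(\Z)\backslash V_\Z^{\pm}} \frac{|\Disc(x)|^{-s}}{|\Stab(x)|}
\]
attached to the prehomogeneous vector space of integral binary cubic forms, where the sign indicates $\pm\Disc>0$. By the Delone--Faddeev correspondence, the isomorphism classes of cubic fields counted by $N_3^{\pm}(X)$ correspond bijectively to $\GL_2(\Z)$-orbits of \emph{maximal} integral binary cubic forms. The first step is to express the sieved Dirichlet series
\[
\xi^{\pm}_{\max}(s) \;=\; \sum_{F} |\Disc(F)|^{-s}
\]
(summed over cubic fields $F$ with $\pm\Disc(F)>0$) in terms of $\xi^{\pm}(s)$ and explicit local corrections at each prime, so that $\xi^{\pm}_{\max}(s)$ inherits meromorphic continuation, the poles at $s=1$ and $s=5/6$ of Shintani's zeta function, and a functional equation relating $s \leftrightarrow 1-s$ with a gamma factor of explicit shape.

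Next, using a Mellin/Perron representation with smooth cutoff, I would write $N_3^{\pm}(X)$ as a contour integral of $\xi^{\pm}_{\max}(s)\, X^s/s$ against a suitable test function, initially along a line $\re(s) = 1+\epsilon$. Shifting the contour leftward and collecting the residues at $s=1$ and $s=5/6$ produces the two leading terms: the local sieve correction at $s=1$ contributes a factor of $1/\zeta(3)$, while at $s=5/6$ it contributes $\zeta(1/3)/\zeta(5/3)$; multiplied by Shintani's explicit residues and by the archimedean constants $C^{\pm}$ and $K^{\pm}$, these yield exactly $\frac{C^{\pm}}{12\zeta(3)}\,X$ and $\frac{4K^{\pm}\zeta(1/3)}{5\Gamma(2/3)^3\zeta(5/3)}\, X^{5/6}$.

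To push the residual contour down to $\re(s) = 2/3+\epsilon$, I would combine the Shintani functional equation (which, via Stirling on the gamma factor, trades absolute convergence on the right for polynomial growth in $|\im s|$ on the left) with a Phragm\'en--Lindel\"of convexity bound across the strip. This produces $\xi^{\pm}_{\max}(s) \ll (1+|\im s|)^{\alpha}$ on the new line for a moderate exponent $\alpha$, and integrating against $X^s/s$ then removing the smoothing via a standard Perron approximation recovers the sharp count with error $O(X^{2/3+\epsilon})$. The precise poly-log power $(\log X)^{2.09}$ in Theorem~\ref{thm_rc} should arise from balancing the smoothing scale against a second-moment bound on the vertical line, with the exponent $2.09$ reflecting an explicit numerical optimum between the height of the integration range and the mean-square growth of the sieved zeta function.

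The main obstacle, and the crux of the analytic work, is controlling the sieve to maximality uniformly across the critical strip. The inclusion--exclusion for maximality decomposes $\xi^{\pm}_{\max}$ as an infinite sum of Shintani zeta functions over sublattices indexed by squarefree $n$, and the individual terms fail to converge absolutely once $\re(s)\le 1$. One remedy is to truncate the sieve at primes $\le Y$ and bound the contribution of $p>Y$ by direct density estimates, then optimize $Y$ against the height of the contour; this is presumably the route streamlining \cite{TT_rc}. The alternative proof alluded to in the abstract instead applies the \emph{discriminant-reducing identity} of \cite{BST}, which collapses the infinite sieve into a single exact identity manifestly holomorphic and polynomially bounded on $\re(s)\ge 2/3 + \epsilon$, bypassing the convergence difficulty entirely. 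In either approach, tracking the sieve uniformly in $\im(s)$ is the essential analytic input; once that is in place, the contour shift and residue calculation deliver the stated asymptotic.
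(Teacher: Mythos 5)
Your high-level framing is right --- Shintani zeta functions, a sieve for maximality, extraction of residues at $s=1$ and $s=5/6$ --- but the analytic core of your proposal does not match what is actually needed, and the step you sketch for the error term would not reach $X^{2/3}$.

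The central issue is your claim that shifting the remaining contour to $\re(s)=2/3+\epsilon$ and applying a Phragm\'en--Lindel\"of convexity bound yields the error $O(X^{2/3+\epsilon})$. Convexity on the (sieved) Shintani zeta function, which has a degree-four gamma factor, gives roughly $\xi(\sigma+it)\ll|t|^{2(1-\sigma)}$, and the resulting balance of truncation error against the line integral lands at around $X^{4/5}$ --- consistent with the weaker exponents obtained historically. The improvement in the paper does not come from a better pointwise estimate on a vertical line at all. Instead, the paper runs the truncated sieve $\sum_{q\le Q}\mu(q)N^\pm(X,\Psi_{q^2})$ and applies \emph{Landau's method} in the Chandrasekharan--Narasimhan form (Perron with $(X-n)^k$ weights, a shift far to the left where the dual Dirichlet series converges absolutely, Bessel-function estimates, and finite differencing). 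The decisive new ingredient is an \emph{averaged} bound on the dual coefficients: Proposition 5.1 of the paper shows that $\sum_{q\in[Q,2Q]}\widehat{\delta_1}(\Psi_{q^2})\ll Q^{2+\epsilon}$, proved via multiplicative bounds on the Fourier transforms $\widehat{\Psi_{p^2}}$ (Proposition 5.2), the overring lemma, and tail estimates. It is exactly this averaging over the sieve modulus $q$ and over dual lattice points --- not a single-$q$ convexity bound --- that lowers the error to $X^{2/3+\epsilon}$. Nothing in your proposal identifies or supplies a bound of this type, and without it the argument stalls.

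Two further points are off. First, the $(\log X)^{2.09}$ exponent does not come from a ``second-moment bound on the vertical line.'' It comes from the \emph{second} proof in Section 7, where the discriminant-reducing identity $\xi^\pm(s,\Psi_{q^2})=q^{-4s}\sum_{klm=q}\mu(l)k^{2s}\xi^\pm(s,\eta_{kl})$ replaces the nonmaximality functions by the root-counting functions $\eta$, whose Fourier transforms and residues are particularly clean; tracking divisor factors like $2^{\omega(q)}$ and $6^{\omega(q)}$ through the Landau-method error term via $\sum_{q\le Q}\mu^2(q)C^{\omega(q)}\ll Q(\log Q)^{C-1}$ is what produces the constant $-\tfrac12+\tfrac{5}{3^{3/5}}=2.086\dots$. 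Second, the discriminant-reducing identity does not ``collapse the infinite sieve into a single exact identity manifestly holomorphic and polynomially bounded on $\re(s)\ge 2/3+\epsilon$''; it is a per-$q$ identity, and one still truncates the sieve at $Q$, handles the tail with Propositions 4.4--4.5, and applies Landau's method to each term. In short, the architecture you describe is the right starting point, but the mechanism that actually drives the exponent down --- the averaged dual-coefficient bound inside Landau's method --- is absent from your proposal.
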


\begin{theorem}\label{thm_rc_torsion}
We have, for any $\epsilon > 0$, that
\begin{equation}\label{eqn_torsion1}
\sum_{0 < \pm D < X} \# \Cl(\Q(\sqrt{D}))[3] = \frac{3 + C^{\pm}}{\pi^2} X + K^{\pm}
\frac{8 \zeta(1/3)}{5 \Gamma(2/3)^3} \prod_p \bigg(1 - \frac{p^{1/3} + 1}{p (p + 1)} \bigg)
 X^{5/6} + O(X^{2/3 + \epsilon}),
\end{equation}
where the sum ranges over fundamental discriminants $D$, 
the expression $\Cl(\Q(\sqrt{D}))[3]$ denotes the $3$-torsion subgroup of the class group of $\Q(\sqrt{D})$, the product is over all primes $p$,
and the constants $C^\pm$ and $K^\pm$ are as in Theorem~$\ref{thm_rc}$.
\end{theorem}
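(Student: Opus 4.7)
The plan is to deduce Theorem~\ref{thm_rc_torsion} from the local-conditions refinement of Theorem~\ref{thm_rc} promised in the abstract, combined with the class-field-theoretic identity
\begin{equation*}
\#\Cl(\Q(\sqrt D))[3] = 1 + 2 \cdot \#\{F \text{ cubic field} : \Disc(F) = D\},
\end{equation*}
valid for every fundamental discriminant $D \neq 1$. Here the trivial element of $\Cl[3]$ accounts for the $1$, while the involution $g \mapsto g^{-1}$ on nontrivial $3$-torsion matches pairs $\{g, g^{-1}\}$ bijectively with unramified $S_3$-Galois closures over $\Q$ containing $\Q(\sqrt D)$, which are themselves the Galois closures of cubic fields $F$ with $\Disc(F) = \Disc(\Q(\sqrt D)) = D$.

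Summing over fundamental $D$ in the range $0 < \pm D < X$, the $1$-contribution evaluates to $\tfrac{3}{\pi^2}X + O(X^{1/2})$ from the classical density of fundamental discriminants of a fixed sign. The remaining contribution is $2 \tilde N_3^\pm(X)$, where $\tilde N_3^\pm(X)$ counts cubic fields $F$ with $0 < \pm\Disc(F) < X$ and $\Disc(F)$ fundamental. This last requirement is a local condition at each prime: for $p \nmid 6$ it demands that the discriminant exponent of $F \otimes \Q_p$ be $0$ or $1$ (i.e.\ $F \otimes \Q_p$ is unramified or tamely totally ramified), while at $p \in \{2,3\}$ it is a short explicit list of allowed étale cubic $\Q_p$-algebras. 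Consequently $\tilde N_3^\pm(X)$ is exactly the kind of count controlled by the paper's local-conditions refinement of Theorem~\ref{thm_rc}, yielding an expansion $\tilde N_3^\pm(X) = A^\pm X + B^\pm X^{5/6} + O(X^{2/3+\epsilon})$ whose coefficients $A^\pm, B^\pm$ are the main-term constants of Theorem~\ref{thm_rc} multiplied by explicit Euler products of local densities.

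A direct local density computation --- either from the $p$-adic orbit structure on binary cubic forms, or from the local Shintani zeta factor at each prime --- evaluates these Euler products to the closed forms appearing in~\eqref{eqn_torsion1}. Combining the main-term output with the $\tfrac{3}{\pi^2}X$ from the diagonal contribution produces the claimed coefficient $(3+C^\pm)/\pi^2$; for the secondary term the Euler product at weight $|\cdot|_p^{-5/6}$ absorbs the $\zeta(5/3)^{-1}$ present in Theorem~\ref{thm_rc} and yields $\prod_p(1 - (p^{1/3}+1)/(p(p+1)))$, giving the stated coefficient $\tfrac{8\zeta(1/3)}{5\Gamma(2/3)^3}\prod_p(1 - (p^{1/3}+1)/(p(p+1)))$.

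The principal obstacle is not the local-global assembly above but the \emph{uniformity} of the local-conditions error bound: to impose infinitely many local conditions while preserving the $O(X^{2/3+\epsilon})$ error, the implied constant in the local-conditions version must depend at most polynomially on the ``complexity'' of the conditions. This uniformity is what allows the contribution from primes $p > X^\delta$ to be absorbed into a convergent Euler tail (and into the $\epsilon$), while the contribution from primes $p \leq X^\delta$ is handled directly. Supplying such uniformity is precisely the ``suitable infinite sets of local conditions'' assertion of the abstract; granted it, the rest of the proof is formal bookkeeping.
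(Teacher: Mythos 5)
Your reduction — write $\#\Cl(\Q(\sqrt D))[3] = 1 + 2\,\#\{F \text{ cubic}: \Disc(F)=D\}$, sum the $1$'s to get $\tfrac{3}{\pi^2}X + O(X^{1/2})$, and then count cubic fields with fundamental discriminant — is exactly the equivalence the paper records (see the discussion around~\eqref{eqn_torsion2}), so the overall strategy is sound. There are, however, two points where your account misses the mark.

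First, a genuine factual slip: for $p \nmid 6$, the local condition forcing $p^2 \nmid \Disc(F)$ is that $F\otimes\Q_p$ be \emph{unramified or partially ramified} (splitting type $(1^21)$, i.e.\ $K \times \Q_p$ with $K/\Q_p$ ramified quadratic), \emph{not} ``tamely totally ramified.'' A tamely totally ramified cubic extension of $\Q_p$ (type $(1^3)$) has local discriminant exponent $e-1 = 2$, so it is precisely the case one must exclude. Your preceding clause ``discriminant exponent $0$ or $1$'' is correct, so this reads as a terminological slip, but as written it is wrong and would derail the local density computation.

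Second, and more importantly, your framing of the ``principal obstacle'' is not where the actual difficulty (or its resolution) lives. The condition ``$\Disc(F)$ fundamental'' is $\Sigma_p = A'_p$ (not totally ramified) at \emph{every} prime — including $2$ and $3$ — and $A'_p$ is exactly one of the two ``ordinary'' local specifications in the paper's terminology, with error factor $E_p(A'_p) = 1$. The whole point of the $A_p / A'_p$ dichotomy is that imposing $A'_p$ at infinitely many primes costs nothing in the error term, so there is no Euler-tail argument to run and no need for the ``polynomial dependence on complexity'' you invoke. In fact, the paper does \emph{not} deduce Theorem~\ref{thm_rc_torsion} from the local-conditions Theorem~\ref{thm_lc}; it proves Theorems~\ref{thm_rc} and~\ref{thm_rc_torsion} simultaneously and directly in Section~\ref{sec:direct_proof}, by choosing the sieve function $\Psi_{p^2}$ to be the indicator of ``$p^2 \mid \Disc$'' (the $A'_p$ case) rather than ``nonmaximal at $p$'' (the $A_p$ case), and feeding this into Landau's method via Theorems~\ref{thm:landau}/\ref{thm:landau_average} with the residue data~\eqref{eq:res_p2} and the Fourier-transform bounds of Proposition~\ref{lem:ft_eval}. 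Your route through Theorem~\ref{thm_lc} would also succeed, but only after recognizing that the specification is ordinary at all primes — the very observation you were groping for in your last paragraph but did not land on.
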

Theorem~\ref{thm_rc_torsion} may be viewed as a counting theorem for cubic fields whose discriminant is fundamental. Indeed, as subgroups of $\Cl(\Q(\sqrt{D}))$ of index $3$ are in bijection
with cubic fields of discriminant $D$ (see, e.g. \cite[Section 8.1]{BST}), Theorem~\ref{thm_rc_torsion} is equivalent
to 
\begin{equation}\label{eqn_torsion2}
N_{3, \textnormal{fund}}^{\pm} (X) = 
\frac{C^{\pm}}{2 \pi^2} X + K^{\pm}
\frac{4 \zeta(1/3)}{5 \Gamma(2/3)^3} \prod_p \bigg(1 - \frac{p^{1/3} + 1}{p (p + 1)} \bigg)
 X^{5/6} + O(X^{2/3 + \epsilon}),
\end{equation}
where $N_{3, \textnormal{fund}}^{\pm} (X)$
denotes
 the number of isomorphism classes of cubic fields $F$ 
 such that $\Disc(F)$ is fundamental and $0 < \pm \Disc(F) < X$.

In each of Theorems~\ref{thm_rc} and~\ref{thm_rc_torsion}, the first main term is due to Davenport and Heilbronn~\cite{DH}, while the second main term was conjectured 
by Datskovsky and Wright \cite[p.~125]{DW3} and Roberts~\cite{Roberts} and proven in \cite{BST} and \cite{TT_rc}.  The latter works in turn built on the successively improved error terms 
obtained 
in~\cite{DH}, \cite{belabas}, and \cite{BBP}. All of these works, including this one, approached these problems by relating them
to counting certain $\GL_2(\Z)$-orbits of integer-coefficient binary cubic forms (though see also~\cite{hough,zhao,ST} for alternative but related~approaches).
We refer to \cite{BST} for a complete, self-contained account of this connection, and to Section~\ref{sec:overview} 
for a briefer summary of the results we will need. 

Note that the error terms in Theorems~1.1 and 1.2, up to the factors of $X^\epsilon$, match the best that one can obtain for the {\itshape smoothed} versions of the same counts, suggesting that we have perhaps achieved the best result possible using our methods.

We also obtain a variation of Theorem \ref{thm_rc}
that counts (isomorphism classes of) cubic fields satisfying certain specified sets of {\itshape local conditions}, significantly improving the error terms in the corresponding results of 
\cite{BST, TT_rc} and their dependences on these local conditions. As we will discuss, our earlier results saw
applications  to the arithmetic of cubic fields and their $L$-functions, and our improvements yield
corresponding improvements to a number of these applications.

For a prime $p$, let $\Sigma_p$
denote a set of (isomorphism classes of) \'etale cubic algebras
over $\Q_p$.  
We call $\Sigma_p$ a {\itshape local specification} at~$p$, and say that a cubic field $F$ satisfies
$\Sigma_p$ if $F \otimes_\Q \Q_p \in \Sigma_p$.
We say that $\Sigma_p$ is {\it ordinary} if 
it is equal to either $A_p$, the set of all \'etale cubic algebras over~$\Q_p$, or $A_p'$, the set of all \'etale cubic algebras over~$\Q_p$ that are not totally ramified. 

Note that a cubic {\it splitting type} at a prime $p$ is also a local specification at $p$ for a cubic field. There are five ways in which a prime can split in a cubic field.
For each of the three unramified splitting types at $p$, namely, $(111)$, $(12)$, and $(3)$, there is a unique \'etale cubic algebra over~$\Q_p$ having that splitting type. For each of the two ramified splitting types, $(1^21)$ and $(1^3)$, the number of \'etale cubic algebras over $\Q_p$ having that splitting type depends on~$p$.

Let $\Sigma=(\Sigma_p)_p$ be a collection of local specifications 
such that $\Sigma$ is {\it ordinary at $p$, i.e., $\Sigma_p$ is ordinary,} for all but finitely many primes $p$. 
We wish to asymptotically count the total number of cubic fields $F$
of absolute discriminant less than $X$ that agree with this collection of local
specifications, i.e., $F\otimes\Q_p\in\Sigma_p$ for all $p$. We have the following theorem:

\begin{theorem}\label{thm_lc}
Let $\Sigma=(\Sigma_p)_p$ denote a collection of cubic local specifications ordinary at all but finitely many primes $p$, and let $N_3^{\pm}(X,\Sigma)$ denote the number of isomorphism classes of 
cubic fields $F$ satisfying $0 < \pm \Disc(F) < X$ and $F\otimes\Q_p\in \Sigma_p$ for all $p$.
Then 
\begin{equation}\label{eqn_lc}
N_3^{\pm} (X, \Sigma) = C^{\pm}(\Sigma) \frac{1}{12 \zeta(3)} X + K^{\pm}(\Sigma)
\frac{4 \zeta(1/3)}{5 \Gamma(2/3)^3 \zeta(5/3)} X^{5/6} + E(\Sigma)\,O_\epsilon(X^{2/3+\epsilon}),
\end{equation}
where the constants $C^\pm(\Sigma)$, $K^{\pm}(\Sigma_p)$, and $E(\Sigma)$ 
are defined by 
\begin{equation}\label{eq:mt_constants}
C^{\pm}(\Sigma) := C^{\pm} \prod_p C_p(\Sigma_p), \ \ 
K^{\pm}(\Sigma) := K^{\pm} \prod_p K_p(\Sigma_p), \ \ E(\Sigma):=\prod_p E_p(\Sigma_p),
\end{equation}
\begin{equation}\label{eq:mt_constants2}
C_p(\Sigma_p) := \frac{\text{\footnotesize $\displaystyle\sum_{F\in\Sigma_p}$}
\!\textstyle\frac{1}{\Disc_p(F)}\frac{1}{|\Aut(F)|}}{\text{\footnotesize $\displaystyle\sum_{F\in A_p}$} \!\textstyle\frac{1}{\Disc_p(F)}\frac{1}{|\Aut(F)|} }, \;\;\;
K_p(\Sigma_p) := \frac{\text{\footnotesize $\displaystyle\sum_{F\in\Sigma_p}$} 
\!\textstyle\frac{1}{\Disc_p(F)}\frac{1}{|\Aut(F)|} \text{\footnotesize $\displaystyle\int_{\scriptsize\calO_F\setminus p\calO_F}$}\text{\footnotesize $\!\!\![\calO_F\!:\!\Z_p[x]]^{2/3}dx$}}
{\text{\footnotesize $\displaystyle\sum_{F\in A_p}$} 
\!\textstyle\frac{1}{\Disc_p(F)}\frac{1}{|\Aut(F)|} \text{\footnotesize $\displaystyle\int_{\calO_F\setminus p\calO_F}$}\text{\footnotesize $\!\!\![\calO_F\!:\!\Z_p[x]]^{2/3}dx$}}, 
\end{equation}
and
\begin{equation}
E_p(\Sigma_p):=
\begin{cases}
\,1   & \text{if } \Sigma_p=A_p,\,A_p',\text{ or } A_p-A_p';\\
\,p^{2/3} & \text{if } \Sigma_p \text{ is a union of splitting types and } \Sigma_p\neq A_p,\,A_p',\,A_p-A_p';\\
\,p^{8/3} & \text{otherwise.}
\end{cases}
\end{equation}
\end{theorem}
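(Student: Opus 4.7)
The plan is to adapt the Shintani-zeta-function proof of Theorem~\ref{thm_rc} to incorporate the local conditions $\Sigma$, tracking the dependence of every estimate on the twist so as to isolate exactly the factor $E(\Sigma)$ in the error term. Using the bijection between cubic fields and $\GL_2(\Z)$-orbits of integer binary cubic forms, each $\Sigma_p$ translates into a $\GL_2(\Z_p)$-invariant subset $L_p$ of the space of binary cubic forms over $\Z_p$. Because $\Sigma_p$ is ordinary for all but finitely many $p$, the combined condition is detected modulo a single integer $M = \prod_p p^{k_p}$: one may take $k_p=0$ when $\Sigma_p$ is ordinary, $k_p=1$ when $\Sigma_p$ is a non-ordinary union of splitting types, and $k_p$ bounded by a small constant otherwise, since étale cubic algebras over $\Q_p$ sharing a splitting type are distinguished by the reduction of the form to level a bounded power of $p$.

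First, I would define a twisted Shintani zeta function $Z^\Sigma(s)$ whose Dirichlet coefficients count $\GL_2(\Z)$-orbits of binary cubic forms satisfying the conditions $L_p$ at every~$p$. Writing the indicator of $\Sigma$ as a linear combination of coset characters of a sufficiently fine principal congruence subgroup of $\GL_2(\Z)$, the Shintani machinery extends to give $Z^\Sigma(s)$ a meromorphic continuation and a functional equation of the same shape as in the untwisted case. The residues at $s=1$ and $s=5/6$ then factor as the residues of the untwisted zeta function times local volume integrals over $L_p$, and the Datskovsky--Wright local mass identification applied prime by prime shows that these local factors are precisely $C_p(\Sigma_p)$ and $K_p(\Sigma_p)$ of~\eqref{eq:mt_constants2}, so the two main terms in~\eqref{eqn_lc} fall out as in the untwisted setup.

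Second, I would repeat the contour-shift argument that yields the $O(X^{2/3+\epsilon})$ error bound in Theorem~\ref{thm_rc}, now applied to $Z^\Sigma(s)$, but with the $M$-dependence of every estimate made explicit. Via the Shintani functional equation, the critical-strip growth of $Z^\Sigma(s)$ is controlled prime-by-prime by the Plancherel norm of the characteristic function of $L_p$ on the finite group of binary cubic forms modulo $p^{k_p}$. For a union of splitting types ($k_p=1$) this contributes a factor $p^{2/3}$ per prime, matching the middle case of $E_p(\Sigma_p)$. For a finer specification it contributes $p^{8/3}$, where the extra $p^{2}$ relative to the splitting-type case reflects the density drop of a single étale algebra inside its splitting type (which is of order $p^{-2}$) propagated through the contour shift. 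Combining these local factors across all primes produces the global factor $E(\Sigma)$ in the error estimate.

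The main obstacle is carrying out this uniform-in-$M$ contour-shift step cleanly. The bounds for the Shintani zeta function in the critical strip used in~\cite{TT_rc} and in the present paper for the untwisted case must be re-proved with explicit polynomial dependence on~$M$, and the smoothing weights that truncate the lattice sum must be chosen so that the resulting approximation errors recombine to produce exactly the factor $E(\Sigma)$ without additional polynomial loss in~$M$; in the ``otherwise'' case one further has to verify that no finer sieve than $k_p=2$ is ever required. Once this uniform critical-strip estimate is in hand, Theorem~\ref{thm_lc} follows from the same sequence of steps as the proof of Theorem~\ref{thm_rc}, and Theorem~\ref{thm_rc_torsion} is recovered by specializing to the local conditions that cut out cubic fields of fundamental discriminant.
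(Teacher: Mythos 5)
Your proposal captures some of the right ideas — translating local conditions into congruence conditions on binary cubic forms, extending the Shintani zeta machinery to twisted zeta functions, reading off the main-term constants from local densities at the poles, and tracking the $M$-dependence of the error — but it has a serious structural gap that the paper's actual proof is built around.

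\textbf{The maximality sieve is missing.} The Shintani zeta function with a finite modulus $\Phi_M$ counts all cubic \emph{rings} satisfying the congruence conditions encoded by $\Phi_M$, not just maximal ones. The ``ordinary'' condition $\Sigma_p = A_p$ or $A_p'$ still imposes a nontrivial condition at $p$: the corresponding ring must be maximal at $p$, which is a congruence condition mod $p^2$ (Proposition~\ref{def_up}). Since this holds at every prime, the combined condition cannot be detected modulo a single finite integer $M$, contrary to what your proposal asserts. There is no single ``twisted Shintani zeta function $Z^\Sigma(s)$'' of finite conductor whose coefficients count cubic fields. The paper instead writes the count as an inclusion-exclusion $\sum_q \mu(q)\, N^\pm(X, \Phi_M \Psi_{q^2})$, where $\Psi_{q^2}$ enforces nonmaximality at $q$, and then must deal with two separate error sources: the Landau-method error for each $q \leq Q$, and the ``tail'' $\sum_{q>Q}$, which is controlled by the uniformity estimates of Section~\ref{sec:prelim}. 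This sieve and its tail are a central part of the argument; your proposal reduces the problem to a contour shift for a single Dirichlet series, which is not available.

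\textbf{Secondary issues.} Once the sieve is in place, bounding the total error requires averaging the ``dual-sum'' contributions over all $q$ in dyadic ranges — the point of the averaged Landau theorem (Theorem~\ref{thm:landau_average}) and Propositions~\ref{prop:bound_delta_2} and their local-conditions analogues. A fixed-$q$ Plancherel/critical-strip bound, as you describe, would lose extra powers and not recover $X^{2/3+\epsilon}$ with the stated $E(\Sigma)$. In addition, the paper derives Theorem~\ref{thm_lc} as a corollary of the stronger averaged statement Theorem~\ref{thm_lc_averaged}, which sums the error over ranges of the ramified prime moduli $r$ and $t$; your proposal does not address this. Finally, a small technical point: the conductor exponent $k_p$ is $2$, not $1$, for any non-ordinary specification that includes a ramified splitting type, since the maximality and ramification conditions live mod $p^2$.

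Your outline of the main-term computation (residues as products of local masses, identified with $C_p(\Sigma_p)$ and $K_p(\Sigma_p)$) is essentially correct and aligned with the paper, but without the maximality sieve the error-term analysis does not close.
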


The expressions for 
$C^{\pm}(\Sigma)$ and $K^{\pm}(\Sigma)$,
in terms of products of local densities of \'etale cubic algebras in $\Sigma_p$, were formulated in \cite[p. 118]{DW3} and \cite[Theorem~7]{BST}, respectively,
and
can be evaluated explicitly for any given $\Sigma_p$.
If $\Sigma_p$ corresponds to one of the five cubic splitting types $\mathcal S_p$, then the quantities $C_p(\calS_p)$, $K_p(\calS_p)$, and $E_p(\calS_p)$ are given by the following~table:

\begin{table}[H]
\begin{center}
\begin{tabular}{|c|c|c|c|c|}\hline
Splitting type $\calS_p$ & Notation & $C_p(\calS_p)$ & $K_p(\calS_p)$  & $E_p(\calS_p)$ \\ \hline\hline
Totally split  & $(111)$ & $\frac{p^2/6}{p^2 + p + 1}$ & $\frac{(1 + p^{-1/3})^3}{6} \cdot \frac{1 - p^{-1/3}}{(1 - p^{-5/3})(1 + p^{-1})}$ & $p^{2/3}$ \\ \hline
Partially split & $(21)$   & $\frac{p^2/2}{p^2 + p + 1}$ & $\frac{(1 + p^{-1/3})(1 + p^{-2/3})}{2} \cdot \frac{1 - p^{-1/3}}{(1 - p^{-5/3})(1 + p^{-1})}$ & $p^{2/3}$ \\ \hline
Inert & $(3)$ & $\frac{p^2/3}{p^2 + p + 1}$ &  $\frac{1 + p^{-1}}{3} \cdot \frac{1 - p^{-1/3}}{(1 - p^{-5/3})(1 + p^{-1})}$ & $p^{2/3}$ \\ \hline
Partially ramified & $(1^21)$ & $\frac{p}{p^2 + p + 1}$ &  $\frac{(1 + p^{-1/3})^2}{p} \cdot \frac{1 - p^{-1/3}}{(1 - p^{-5/3})(1 + p^{-1})}$  & $\:\!\phantom{{}^*}p^{2/3\;\!*}$\\ \hline
Totally ramified& $(1^3)$ & $\frac{1}{p^2 + p + 1}$ &  $\frac{1 + p^{-1/3}}{p^2} \cdot \frac{1 - p^{-1/3}}{(1 - p^{-5/3})(1 + p^{-1})}$ & $\:\!\phantom{{}^*}1^{\:\!*}$ 
\\  \hline
\end{tabular}
\end{center}
\caption{Splitting types, main term constants, and error term constants.} \label{table:error_exponents}
\end{table}\vspace{-.25in}

If a local specification $\Sigma_p$ is a union of such splitting types, then the constants $C_p(\Sigma_p)$ and $K_p(\Sigma_p)$ are the sum of those
for the respective splitting types, while the error constant $E_p(\Sigma_p)$ is the maximum (except in the ordinary cases $\Sigma_p=A_p$ or $A_p'$, where we may take $E_p(\Sigma_p)=1$).
By construction, the constants $C_p(\calS_p)$ and $K_p(\calS_p)$ for the five splitting conditions $\calS_p$ each sum to $1$.

The asterisks indicate error terms that can be improved by averaging, as we will describe shortly.

\medskip
\noindent{\bf Local completions not determined by splitting types.}
To complete the description of Theorem \ref{thm_lc},
we describe the constants
$C_p(\Sigma_p)$, $K_p(\Sigma_p)$, and $E_p(\Sigma_p)$ in the cases that $\Sigma_p$ consists of a single \'etale cubic extension $F$ of $\Q_p$ that is not completely specified by its splitting type $\calS_p$.
\begin{itemize}
\item
If $p \neq 2$, then the  ``partially ramified'' condition $\calS_p$ corresponds to two choices for $F \otimes \Q_p$. If $\Sigma_p$
consists of only one of these, then $C_p(\Sigma_p)=\frac12C_p(\calS_p)$ and $K_p(\Sigma_p)=\frac12K_p(\calS_p)$.
\smallskip\item
If $p \equiv 1 \!\pmod{3}$, then the ``totally ramified'' condition $\calS_p$ corresponds to three choices for $F\otimes\Q_p$. If $\Sigma_p$ consists of only one of these, then
$C_p(\Sigma_p)=\frac13C_p(\calS_p)$ and $K_p(\Sigma_p)=\frac13K_p(\calS_p)$.
\smallskip\item
If $p = 2$, then the  ``partially ramified'' condition $\calS_2$ corresponds to six choices for $F \otimes_\Q \Q_p$, each of which 
is of the form $\Q_2 \times E$ where $E$ is a ramified quadratic extension of~$\Q_2$. Generating polynomials for these six possibilities 
are given in Section~\ref{appendix:count_quadratic}, together with constants $c_2$ giving the proportion of quadratic number fields $F_2$
such that $F_2 \otimes_{\Q} \Q_2 \simeq E$.
If~$\Sigma_2$ consists of only one of these choices, then
$C_2(\Sigma_2)=3c_2C_2(\calS_2)$ and $K_p(\Sigma_p)=3c_2 K_2(\calS_2).$
\smallskip\item
Finally, if $p = 3$, then the ``totally ramified'' condition $\calS_3$ corresponds to nine choices for $F \otimes_\Q \Q_p$; we refer to 
\cite[Section 6.2]{TT_rc} for a list of generating polynomials for these nine extensions and the constants that 
$C_3(\calS_3)$ and $K_3(\calS_3)$ should be multiplied by in the case that $\Sigma_3$ consists only of one of these nine extensions.
\end{itemize}
We may always take $E_p(\Sigma_p) = 8/3$, although with additional work this could be improved.

Theorem \ref{thm_lc} can also be extended to count {\it nonmaximal} cubic rings satisfying suitable local specifications (see Remark \ref{remark:other_split}).

\subsection*{Averaged error terms} As indicated by the asterisks in the table, we may obtain stronger bounds on $E(\Sigma)$ on average when such local conditions are imposed over ranges of primes.
Let~$U$ be any positive integer,
and for each $p \mid U$ let $\Sigma_p$ be an arbitrary local specification.
For each pair $r,t$ of positive squarefree integers such that $(rt,U)=1$, we  complete this to a collection
\smash{$\Sigma^{r,t} = (\Sigma_p)_p$} of local specifications over all $p$ satisfying the following conditions:
\begin{itemize}
\item If $p\mid r$, then \smash{$\Sigma_p$} consists of all partially ramified cubic extensions of $\Q_p$.
\smallskip
\item If $p\mid t$, then \smash{$\Sigma_p$} consists of all totally ramified cubic extensions of $\Q_p$.
\smallskip
\item If $p\nmid Urt$, then \smash{$\Sigma_p$} is ordinary, i.e., \smash{$\Sigma_p=A_p$} or $A_p'$.
\end{itemize}
\begin{theorem}\label{thm_lc_averaged}
For each collection of cubic local specifications $\Sigma=(\Sigma_p)_{p}$, 
let $E(X, \Sigma)$ denote the
error term in estimating  $N_3^{\pm}(X, \Sigma)$ using
\eqref{eqn_lc}. Then, for any $\epsilon>0$, we have 
\begin{equation}\label{eq:lc_averaged}
\sum_{r \leq R} \sum_{t \leq T} |E(X_{r,t},\Sigma^{r,t})| \ll X^{2/3+\epsilon} R^{2/3}\prod_{p \mid U} E_p(\Sigma_p)
\end{equation}
for each $R,T,X>0$ 
and any $X_{r, t}$ with $X_{r, t} \leq X$;
here the sum is over squarefree integers $r$ and $t$ with $(U,rt)=1$.
\end{theorem}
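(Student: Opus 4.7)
My plan is to revisit the analytic derivation of Theorem~\ref{thm_lc} and replace the pointwise bound $|E(X_{r,t},\Sigma^{r,t})|\ll r^{2/3}X^{2/3+\epsilon}$ by a sharper one that incorporates the natural local densities at primes dividing $rt$, so that summing over $(r,t)$ yields a manageable total. The first step is to reduce to the case $U=1$: since the Shintani zeta function $\xi^\pm(s,\Sigma)$ has an Euler factorization whose factor at each prime $p$ depends only on $\Sigma_p$, the conditions at primes $p\mid U$ contribute the factor $\prod_{p\mid U}E_p(\Sigma_p)$, which passes cleanly through the rest of the argument.

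With this reduction in hand, I represent $E(X_{r,t},\Sigma^{r,t})$ via a Mellin--Perron contour integral of $\xi^\pm(s,\Sigma^{r,t})X_{r,t}^s/s$ along $\re(s)=2/3+\epsilon$, with the main terms of \eqref{eqn_lc} arising as residues at $s=1$ and $s=5/6$. Writing $\xi^\pm(s,\Sigma^{r,t})=\xi^\pm(s)\prod_{p\mid r}A_p(s)\prod_{p\mid t}B_p(s)$, where $A_p$ and $B_p$ are the explicit local factors for partial and total ramification respectively, and exchanging the sum over $(r,t)$ with the integral, the estimation reduces to bounding
\[
\int_{\re(s)=2/3+\epsilon}|\xi^\pm(s)|\,\Bigl(\sum_{r\le R}\prod_{p\mid r}|A_p(s)|\Bigr)\Bigl(\sum_{t\le T}\prod_{p\mid t}|B_p(s)|\Bigr)\frac{|ds|}{|s|},
\]
in which both outer sums range over squarefree integers coprime to $U$. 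The key computation is that on the contour one has $|A_p(s)|\ll p^{-1/3+\epsilon}$ and $|B_p(s)|\ll p^{-4/3+\epsilon}$, so the squarefree $r$-sum is $\ll R^{2/3+\epsilon}$ and the squarefree $t$-sum is $O(1)$; combined with standard control on $\xi^\pm(s)$ along this line, this yields \eqref{eq:lc_averaged}.

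The main technical obstacle is establishing the sharp bound $|A_p(s)|\ll p^{-1/3+\epsilon}$ uniformly in $p$ and along the contour: the pointwise analysis underlying Theorem~\ref{thm_lc} only yields $|A_p(2/3+\epsilon)|\ll p^{1/3}$, corresponding to the factor $E_p=p^{2/3}$ in the table, and this weaker bound produces merely $X^{2/3+\epsilon}R^{5/3}T$. The extra saving of $p^{2/3}$ per partially ramified prime must come from recognizing that the natural density $1/p$ of partial ramification is already encoded in $A_p(s)$, and extracting it requires a refined analysis of the orbital integrals defining $A_p(s)$ at $s=2/3+\epsilon$. A secondary issue is the dependence on the variable argument $X_{r,t}\le X$, which I would handle by dyadic decomposition of the range of $X_{r,t}$ without loss in the final exponent.
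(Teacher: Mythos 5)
Your plan diverges substantially from the paper's proof, and contains two structural gaps that would prevent it from being carried out as described.

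\textbf{First}, the Shintani zeta function $\xi^\pm(s,\Phi_m)$ is \emph{not} an $L$-function: its coefficients $a^\pm(\Phi_m,n)$ are not multiplicative in $n$, and there is no Euler-product factorization of the form $\xi^\pm(s,\Sigma^{r,t})=\xi^\pm(s)\prod_{p\mid r}A_p(s)\prod_{p\mid t}B_p(s)$. What does factor over primes is the \emph{local condition} $\Phi_m=\otimes_{p^a\|m}\Phi_{p^a}$ (and hence the residues, as in Theorem~\ref{thm:fe}), but this is a statement about congruence conditions on cubic forms, not a multiplicative decomposition of the zeta function itself. Your intended ``reduction to $U=1$'' and the entire display with $\prod_{p\mid r}A_p(s)\prod_{p\mid t}B_p(s)$ therefore do not parse.

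\textbf{Second}, even if one granted some replacement local structure, the Mellin--Perron integral $\int_{\re(s)=2/3+\epsilon}\xi^\pm(s,\cdot)X^s/s\,ds$ does not converge absolutely: $\xi^\pm(s,\Phi_m)$ grows polynomially on vertical lines inside the critical strip (by the functional equation \eqref{eq:FE} and Phragmen--Lindel\"of), so the integrand is $\gg |t|^{c-1}$ for some $c>0$, and the unsmoothed integral diverges. This is precisely the obstruction that Landau's method, as formulated in Theorem~\ref{thm:landau} and~\ref{thm:landau_average} following \cite{LDTT}, is designed to handle via Perron with a $k$-fold smoothing kernel and finite differencing. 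Shifting the contour and ``controlling $\xi^\pm(s)$'' as you propose cannot by itself produce \eqref{eq:lc_averaged}.

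\textbf{Third}, and you acknowledge this yourself, the bound $|A_p(s)|\ll p^{-1/3+\epsilon}$ on $\re(s)=2/3+\epsilon$ is the entire technical content, and you offer no route to it. In the paper this saving is obtained very differently: the partial-ramification condition is written as $\Theta_{(1^21),p^2}=\Theta_{\textdivblahblah,p}-\Theta_{\textdivblahblah^2,p^2}$ (relation~\eqref{eq:Phip_relations}), and one bounds the quantity $\widehat{\delta_1}(\Phi_m\Psi_{q^2})$ \emph{on average over $q$, $r$, $t$ in dyadic intervals} using Mori's exponential-sum formulas for $\widehat{\Phi_p}$, the overring lemma (Lemma~\ref{lemma_overrings}), and the tail estimates (Propositions~\ref{prop:tail_no_eps} and~\ref{prop:uniformity}). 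The extra saving does \emph{not} arise from a sharper pointwise bound on a local factor at a fixed $s$; it arises because the sums over $q,r,t$ collapse after the dyadic rearrangement in \eqref{eqn:total_contribution2a}, and because the $T$-dependence cancels against the choice $Q=X^{1/3-\epsilon}T^{-1}R^{-2/3}U^{-2/3}$. Your approach gives no mechanism for either effect. To repair the argument you would essentially have to rediscover the $\delta_1/\widehat{\delta_1}$ bookkeeping of Theorem~\ref{thm:landau_average} and the Fourier-transform/tail-estimate machinery of Sections~\ref{sec:prelim}--\ref{sec:lc}; at that point the contour-integral framing is not doing any work.
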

In other words, we may sum over ranges of $r$ and $t$ ``for free''.  Indeed, Theorem \ref{thm_lc} follows immediately from Theorem \ref{thm_lc_averaged} 
by applying the bound on the right-hand side of \eqref{eq:lc_averaged} to each individual summand on the left.

\subsection*{Levels of distribution}
Our results may be interpreted as ``level of distribution'' estimates for cubic
fields with respect to local conditions. A number of related such results were obtained by 
Belabas and Fouvry \cite{BF}, and in general we obtain quantitatively stronger results.

We expect the following two corollaries to perhaps
be the most useful in applications; variations can be deduced in the same way.

\begin{corollary}\label{cor:ld_total}
For each collection of cubic local specifications $\Sigma=(\Sigma_p)_{p}$, 
let $E(X, \Sigma)$ denote again the
error in estimating  $N_3^{\pm}(X, \Sigma)$ using
\eqref{eqn_lc}. Then for each~$\epsilon, A > 0$, we have 
\begin{equation}\label{eq:ld_total}
\sum_{q < X^{1/5 - \epsilon}} \mu^2(q) 
\sum_{\Sigma \!\!\!\!\pmod q} |E(X, \Sigma)| \ll_{\epsilon, A} \frac{X}{(\log X)^A},
\end{equation}
where the inner sum is over all collections $\Sigma = (\Sigma_p)_p$ such that if $p \mid q$, then $\Sigma_p$  corresponds to one of the $32$ subsets of the five splitting types, and if $p \nmid q$, then $\Sigma_p = A_p$.
\end{corollary}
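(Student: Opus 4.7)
The plan is to derive Corollary~\ref{cor:ld_total} directly from the pointwise form of Theorem~\ref{thm_lc}; in the range $q<X^{1/5-\epsilon}$ the averaging provided by Theorem~\ref{thm_lc_averaged} is not needed, and in fact the proof gives a genuine power saving of the form $X^{1-\delta}$.

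Fix a squarefree $q$ and a collection $\Sigma=(\Sigma_p)_p$ of the type appearing in the inner sum. For each $p\mid q$, the choice of $\Sigma_p$ is one of the $32$ subsets of the five splitting types; for $p\nmid q$ we have $\Sigma_p=A_p$, so $E_p(\Sigma_p)=1$. The key observation is that since each $\Sigma_p$ (for $p\mid q$) is a union of splitting types, the third case in the definition of $E_p$ in Theorem~\ref{thm_lc} (the $p^{8/3}$ case) never occurs. Inspecting the first two cases, $E_p(\Sigma_p)\in\{1,\,p^{2/3}\}$: exactly three subsets ($A_p$, $A_p'$, and $A_p-A_p'$) give $E_p=1$, and each of the remaining $29$ subsets gives $E_p=p^{2/3}$. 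Theorem~\ref{thm_lc} then yields
\[
|E(X,\Sigma)|\;\ll_\epsilon\; X^{2/3+\epsilon}\,E(\Sigma)\;=\;X^{2/3+\epsilon}\!\prod_{p\mid q}E_p(\Sigma_p).
\]

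Summing this factored estimate over the $32^{\omega(q)}$ collections $\Sigma\pmod q$ factors across the primes dividing $q$:
\[
\sum_{\Sigma\pmod q}|E(X,\Sigma)|\;\ll_\epsilon\;X^{2/3+\epsilon}\prod_{p\mid q}\bigl(3+29\,p^{2/3}\bigr)\;\ll\;X^{2/3+\epsilon}\cdot 32^{\omega(q)}\cdot q^{2/3}.
\]
A standard elementary estimate gives $\sum_{q\le Q}\mu^2(q)\,32^{\omega(q)}\ll Q(\log Q)^{31}$, and then partial summation yields $\sum_{q\le Q}\mu^2(q)\,32^{\omega(q)}\,q^{2/3}\ll Q^{5/3}(\log Q)^{31}$.

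Taking $Q=X^{1/5-\epsilon}$, the total bound becomes
\[
X^{2/3+\epsilon}\cdot X^{(1/5-\epsilon)\cdot 5/3}\cdot(\log X)^{31}\;=\;X^{1-2\epsilon/3}(\log X)^{31},
\]
which is $\ll_{\epsilon,A}X/(\log X)^A$ for every $A>0$, with room to spare. Since the argument is essentially bookkeeping once Theorem~\ref{thm_lc} is granted, there is no substantive obstacle; the only conceptually important point is that restricting $\Sigma_p$ to be a union of splitting types avoids the $p^{8/3}$ regime of $E_p$, and this is precisely what makes the choice $Q=X^{1/5}$---obtained by balancing the individual error $X^{2/3}$ against the $Q^{5/3}$ coming from summing $E(\Sigma)\le q^{2/3}$ over $q\le Q$---the natural threshold for the stated range.
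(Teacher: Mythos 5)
Your proposal is correct and follows essentially the same route as the paper: deduce the bound pointwise from Theorem~\ref{thm_lc} using $E_p(\Sigma_p)\le p^{2/3}$ for unions of splitting types, and then sum over $q<X^{1/5-\epsilon}$. The paper simply compresses the counting of the $32^{\omega(q)}$ collections into an $\ll X^\epsilon$ factor, whereas you track the $(3+29p^{2/3})$ product explicitly; either way the final saving $X^{-2\epsilon/3}$ comfortably beats any fixed power of $\log X$.
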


This is immediate from Theorem \ref{thm_lc}: there
are $\ll X^{\epsilon}$ choices of $\Sigma$ for any given $q$, and for each such choice we have
$|E(X, \Sigma)| \ll X^{2/3 + \epsilon} q^{2/3}$; this error may be summed over all $q$ up to~$X^{1/5 - \epsilon}$ and remain within the upper bound of \eqref{eq:ld_total}.
Moreover, if we instead sum over all~collections $\Sigma$ where $\Sigma_p = A_p$ {\it or} $\Sigma_p = A'_p$ for each $p \nmid q$, independently, then Corollary~\ref{cor:ld_total} still holds.

The following corollary follows similarly from Theorem \ref{thm_lc_averaged}. 

\begin{corollary}
Let $N_3^{\pm}(X, q)$ denote the number of isomorphism classes of cubic fields $F$ such that 
$q\mid \Disc(F)$ and $0 < \pm \Disc(F) < X$. 
Let $E(X, q)$ denote the error term in estimating $N_3^{\pm}(X, q)$ using 
\eqref{eqn_lc}. Then, for each $\epsilon, A > 0$, we have 
\begin{equation}
\sum_{q < X^{1/2 - \epsilon}} \mu^2(q) |E(X, q)| \ll_{\epsilon, A} \frac{X}{(\log X)^A}.
\end{equation}
The same holds if one counts only cubic fields having squarefree discriminant.
\end{corollary}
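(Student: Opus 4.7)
The plan is to reduce the corollary to Theorem \ref{thm_lc_averaged} by decomposing the condition $q \mid \Disc(F)$ according to the ramification types at the primes $p \mid q$. For squarefree $q$, the condition $p \mid \Disc(F)$ is equivalent to $p$ ramifying in $F$, and each such prime falls uniquely into either the partially ramified splitting type $(1^21)$ (contributing $p$ to $\Disc(F)$) or the totally ramified splitting type $(1^3)$ (contributing $p^2$). Partitioning the prime divisors of $q$ accordingly yields a unique factorization $q = rt$ with $(r,t)=1$, and with $\Sigma^{r,t}$ as defined just before Theorem \ref{thm_lc_averaged} (taking $U=1$), we have
$$N_3^{\pm}(X,q) = \sum_{\substack{rt=q\\(r,t)=1}} N_3^{\pm}(X,\Sigma^{r,t}).$$
By the multiplicativity of the constants $C_p, K_p$ in \eqref{eq:mt_constants} and their additivity over unions of splitting types (noted after Table \ref{table:error_exponents}), the same identity holds termwise on the main terms, so that $E(X,q) = \sum_{rt=q} E(X,\Sigma^{r,t})$.

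Next I would apply the triangle inequality and swap the order of summation:
$$\sum_{q<Q} \mu^2(q)\, |E(X,q)| \;\leq\; \sum_{\substack{r,t\geq 1,\,(r,t)=1\\ \mu^2(rt)=1,\,rt<Q}} |E(X,\Sigma^{r,t})|,$$
and then group the right-hand side dyadically in $r$. For each dyadic range $[R,2R)$ with $R \leq Q$, the inner sum over $R\leq r < 2R$ and $t \leq Q/R$ is estimated by Theorem \ref{thm_lc_averaged} (applied with a small auxiliary $\epsilon' > 0$ in place of $\epsilon$, and $U=1$), giving $O(X^{2/3+\epsilon'} R^{2/3})$, with no dependence on the upper bound $T = Q/R$ for $t$. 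Summing over the $O(\log Q)$ dyadic values of $R$ produces a total of $\ll X^{2/3+\epsilon'} Q^{2/3} \log Q$.

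Setting $Q = X^{1/2-\epsilon}$ yields exponent $\tfrac{2}{3}+\epsilon'+\tfrac{2}{3}\bigl(\tfrac{1}{2}-\epsilon\bigr) = 1+\epsilon'-\tfrac{2\epsilon}{3}$; choosing for instance $\epsilon' = \epsilon/3$ produces a genuine power saving, and the bound is $\ll X^{1-\epsilon/3}\log X$, which is $o\bigl(X/(\log X)^A\bigr)$ for every fixed $A$. For the ``squarefree discriminant'' variant, every ramified prime must be partially ramified (totally ramified would contribute a squared factor), so the factorization is forced into $r=q$, $t=1$, and the same application of Theorem \ref{thm_lc_averaged} with $T=1$ gives the stated bound.

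The main substantive point is really the bookkeeping on the main terms: one must verify that the additivity of $C_p(\Sigma_p)$ and $K_p(\Sigma_p)$ over unions of splitting types lines up exactly with the partitioning $q = rt$, so that passing to absolute values does not double-count or drop main-term contributions. Once this is established, the critical structural feature of Theorem \ref{thm_lc_averaged}---that the right-hand side contains no $T$-dependence, reflecting the averaged improvement from $p^{2/3}$ to essentially $1$ at totally ramified primes---is precisely what permits $Q$ to reach $X^{1/2-\epsilon}$, rather than the smaller $X^{1/5-\epsilon}$ appearing in Corollary \ref{cor:ld_total}.
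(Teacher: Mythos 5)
Your proposal is correct and fills in exactly the intended argument: decomposing $q \mid \Disc(F)$ by which primes are partially versus totally ramified yields $E(X,q) = \sum_{rt=q} E(X,\Sigma^{r,t})$, and a dyadic decomposition in $r$ combined with the fact that the right side of \eqref{eq:lc_averaged} carries no $T$-dependence delivers a power saving for $q$ up to $X^{1/2-\epsilon}$. The paper leaves this derivation implicit (asserting only that it follows similarly from Theorem~\ref{thm_lc_averaged}), and your handling of both the main-term additivity over unions of splitting types and the squarefree-discriminant variant (where $t$ is forced to equal $1$ and one uses $A_p'$ away from $q$) correctly captures that intent.
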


\subsection*{Applications.} Our previous results in \cite{BST, TT_rc} were applied by various authors
to obtain further results concerning the arithmetic of cubic fields and their $L$-functions. In some cases, our improvements lead
to quantitative results there. Here are several examples:

\medskip
1. Let $N_6(X; S_3)$ be the number of $S_3$-sextic fields $L$ with 
$|\Disc(L)| < X$. Asymptotics for $N_6(X; S_3)$ were obtained independently by Belabas and Fouvry~\cite{BF_S3}
and the first author and Wood \cite{BW}. The second and third authors obtained a power
saving error term in \cite{TT_S3}, proving that
\[
N_6(X; S_3) = c X^{1/3} + O(X^{1/3 - \frac{5}{447} + \epsilon})
\]
for an explicit constant $c$.

We can now improve this error term to $O(X^{2/7 + \epsilon})$, only slightly larger than a conjectured secondary term
of order $X^{5/18}$. In \cite[Theorem 2.2]{TT_S3}, we now
have $(\alpha,\beta)=(-1,2/3+\epsilon)$ on average, which means that the choice $Q = X^{1/7}$ is admissible in and just
below \cite[(2.7)]{TT_S3}.

\medskip
2. Let $A$ be an abelian group with minimal
prime divisor of $|A|$ greater than $5$. In \cite{wang},
Wang obtains an asymptotic formula counting
degree $3|A|$ extensions with Galois group 
$S_3 \times A$, of the form
\[
N(S_3 \times A) = C_1 X^{1/|A|} +
C_2 X^{5/6|A|} + O(X^{5/6|A| - \delta}),
\]
for explicit constants $C_1$, $C_2$, and $\delta$.

Her formula
for 
the discriminant of an $S_3 \times A$-extension
is determined by that of the $S_3$- and
$A$-subextensions, and the most subtle part
is an arithmetic factor divisible by
those primes ramified in the $S_3$- and 
$A$-subextensions. This makes it necessary to
count the number of $S_3$-cubic fields
with prescribed ramification behavior.

As of this writing, Wang's work relies on a version of Theorem \ref{thm_lc} in an earlier draft of this paper, in which we had obtained results stronger than those of \cite{TT_rc},
but weaker than Theorem~\ref{thm_lc_averaged}. She informs us that our improvements may allow her to relax the condition on the minimal prime divisor of $|A|$.

\medskip
3. In \cite{MT}, McGown and Tucker studied the statistics of {\itshape genus numbers} of cubic fields.
Following \cite{MT}, the genus field of a number field $F$ is defined to be the maximal extension
$F'/F$ that is (a) unramified at all finite primes and (b) a compositum of the form $Fk^*$ with $k^*$ absolutely abelian.
The {\itshape genus number} $g_F$ is then defined to be $[F' : F]$.

Let $N^{\pm}_{\textnormal{genus}}(X)$ denote the count of cubic fields $F$ with $0 < \pm \Disc(F) < X$ such that~$g_F = 1$. (Equivalently, this counts cubic fields for which there is no nontrivial extension satisfying (a) and (b) above.)
Then McGown and Tucker prove that
\[
N^{\pm}_{\textnormal{genus}}(X) = \frac{29C^\pm}{324 \zeta(2)} \prod_{p \equiv 2 \pmod 3} \left( 1 + \frac{1}{p(p + 1)} \right) X + O(X^{16/17 + \epsilon}).
\]
By using Theorem \ref{thm_lc_averaged} instead of the results of \cite{TT_rc} in their proof, the error term above can be improved to $O(X^{2/3 + \epsilon})$,
with (as expected) a secondary term of order $X^{5/6}$.

\medskip
4. Cho and Kim \cite{CK},
Shankar, S\"odergren, and Templier \cite{SST}, and Yang \cite{yang} all obtained one-level density results for the Artin $L$-functions associated
to cubic fields. Such results take the
following form. For each cubic number field $F$, let $L(s, \rho_F) = \zeta_F(s)/\zeta(s)$
be its associated Artin $L$-function. In \cite[Section 2]{SST}, it is proved that
\begin{equation}\label{eq:sst}
\lim_{X \rightarrow \infty} \frac{1}{N_3(X)} \sum_{|\Disc(F)| < X} \sum_{L(\frac{1}{2} + i \gamma_F, \rho_F) = 0} 
f \left( \frac{\gamma_F \mathscr{L}}{2 \pi}\right) = \widehat{f}(0) - \frac{f(0)}{2},
\end{equation}
where: the inner sum is over nontrivial zeroes $\frac{1}{2} + i \gamma_F$ of $L(s, \rho_F)$, not necessarily assumed to
lie on the $\frac{1}{2}$-line; $\mathscr{L} \approx \log(X)$ is a normalizing factor; and 
$f$ is a Paley-Wiener function whose Fourier transform is smooth and supported in 
$(-\frac{4}{41}, \frac{4}{41})$.

Their proof relies on the ``explicit
formula'' relating the left side of
\eqref{eq:sst} to the prime power coefficients of the
$L$-functions $L(s, \rho_F)$. These coefficients
are determined by the splitting behavior of primes
in these cubic fields $F$; therefore, our results
imply asymptotic density results 
(with power-saving error terms) for the
values of these coefficients on average over~$F$.

These authors relied on the results 
of \cite{BBP, TT_rc} which are improved here.
Applying Theorem~\ref{thm_lc_averaged} in their proof, we immediately improve their range of support on the Fourier transform 
to $(- \frac27, \frac27)$. 
(See also \cite{CK} and \cite{yang} for very similar results, with \cite{CK} also allowing for twisting by an automorphic form.)

In more recent work, Shankar, S\"odergren, and Templier \cite[Theorem 5]{SST_central} improved the range of support to $(- \frac25, \frac25)$, provided
that a local specification is imposed requiring any fixed prime to be inert. Instead of relying on \cite{BBP, TT_rc}, they developed (independently of the
present paper) a smooth variation of Davenport-Heilbronn, with error terms comparable to ours. They also observed that, when considering Artin characters
in lieu of the individual splitting conditions in Table \ref{table:error_exponents}, some cancellation occurs in the Fourier transform and better error terms
can be obtained. Incorporating this observation into the proof in \cite{SST} or \cite{SST_central}, we are also able to obtain $(- \frac25, \frac25)$, and without any
required local specification.

\medskip
5. Using the secondary term in Theorem \ref{thm_lc} and in \cite{BST, TT_rc}, 
Cho, Fiorilli, Lee, and S\"odergren~\cite{CFLS} refined the latter one-level density estimates to exhibit
a secondary term in that counting function as well.
Moreover, they proved that even a tiny 
improvement to our error term in Theorem~\ref{thm_rc} would lead to a larger than 
expected omega result in the Ratios Conjecture of Conrey, Farmer, and Zirnbauer \cite{CFZ}.
They also proved that improving 
the error in Theorem \ref{thm_lc} to $O(p^\omega X^{\theta})$ for all $p$ and $X$, for any $\omega$ and $\theta$ with $\omega + \theta < \frac{1}{2}$,
would contradict the Generalized Riemann Hypothesis. This is thus 
a conditional omega result establishing a limitation on the strength of the error estimates that can actually~hold.

\subsection*{Method of proof}
Our proofs apply the theory of {\itshape Shintani zeta functions}, which were introduced 
by Sato and Shintani in their landmark works~\cite{SS, shintani}.  The theory was extended to the adelic setting by Datskovsky and Wright~\cite{DW1, DW2}, and developed further by the second and third authors in \cite{TT_L}.  In \cite{TT_rc}, the theory was utilized to obtain \eqref{eqn_main_thm} and \eqref{eqn_torsion1} with error terms of $O(X^{7/9 + \epsilon})$ and~$O(X^{18/23 + \epsilon})$, respectively.

In this paper, we will in fact give two variants of the improved estimates in Theorem~\ref{thm_rc}:
\begin{itemize}
\item Our simplest proof applies Landau's method for estimating partial sums of Dirichlet series having
analytic continuation and a functional equation. While this was also utilized in \cite{TT_rc}, we will deploy Landau's method in a more effective manner that keeps track of the dependence on the densities of the local conditions being considered (equivalently, the dependence on the residues of the associated Shintani zeta functions), and also takes advantage of the average behavior of the Fourier transforms of the local conditions. 
This not only substantially lowers the error terms,
but also greatly simplifies the~proof. 
The method enables us to prove
Theorems~\ref{thm_rc} and~\ref{thm_rc_torsion} simultaneously, 
with error terms of $O(X^{2/3 + \epsilon})$.
This is carried out in Section~\ref{sec:direct_proof}.

\smallskip
\item We also give a proof of Theorem \ref{thm_rc} using a ``discriminant-reducing identity''.
Such an identity was used in the first author's work with Shankar and Tsimerman \cite{BST}, obtaining \eqref{eqn_main_thm}
with an error term of $O(X^{13/16 + \epsilon})$. We translate the identity into the language of Shintani zeta functions, and 
deploy it within our use of Landau's method. 
We also give a more precise treatment of the error term in this proof,
replacing  the $O(X^{\epsilon})$ with $O( (\log X)^\alpha)$
for any $\alpha > - \frac12 + \frac{5}{3^{3/5}} = 2.0864\dots$.
This is carried out in Section \ref{sec:disc_reduce_1}.

\end{itemize}

The second proof variant is in some ways more complicated, since in the end we must resort
to Landau's method anyhow. However, it avoids several messy computations
(most of which were carried out in \cite{TT_L}), and it showcases how such a  
``discriminant-reducing identity'' may be used. 
Indeed, similar arguments could potentially be applied to prove Theorem~\ref{thm_rc_torsion} using the recent discriminant-reducing identities due to 
O'Dorney \cite{odorney, odorney2} (which in turn build on the identities of Ohno~\cite{Ohno} and Nakagawa~\cite{nakagawa}), though we do not pursue that here. We suspect that 
these methods may have further applications as well.

In the interest of brevity, we will describe the first proof in detail, and then will give a full account only of those elements of the
second proof that are new. In particular, the second proof leads to a different (but equivalent) computation of the main terms
in our main results, the details of which we will omit.

Our proof of Theorems \ref{thm_lc} and \ref{thm_lc_averaged}, which count cubic fields with local conditions,
is based on our first proof method, and we explain the details in Section \ref{sec:lc}. We go to some efforts to
optimize the dependence of the error terms on these local conditions. For example, with~$U = 1$, the error term
of $O(X^{2/3 + \epsilon} R^{2/3})$ in Theorem \ref{thm_lc_averaged} significantly improves upon the error of
$O(X^{7/9 + \epsilon} (RT)^{25/9})$ in~\cite[Theorem 1.3]{TT_rc}, while also
holding in greater generality. This quantitative improvement is reflected in the substantial corresponding quantitative improvements
in the various applications discussed above.
The proof applies exponential sum formulas of 
Mori \cite{Mori} and the second and third authors
\cite{TT_L} together with a further averaging technique to treat the 
error terms as effectively as possible.

There are a variety of results similar to the Davenport--Heilbronn
theorems in the literature, such as the first author's work counting quartic \cite{B_quartic} and quintic \cite{B_quintic} fields,
together with a number of applications, and we hope that the framework
formulated in this article will provide useful tools
in those situations as well.

\subsection*{Prospects for further improvement}{
As mentioned earlier, our error terms in Theorems~1.1 and 1.2 essentially match what one can prove for the {\itshape smoothed}
versions of the same counts: see independent work of Shankar, S\"odergren, and Templier \cite[Theorem 6.11]{SST_central} for such a smooth count, as well as work of Hough \cite[Theorem 1.1]{hough_shape}, smoothing while also twisting by a cusp form. 
The work \cite{SST_central} also includes an analogue of our results with local conditions, with the dependence of the error term matching Table \ref{table:error_exponents} in the cases they treat,
illustrating that we have perhaps achieved the best result possible using our methods.

To improve our error term beyond $O(X^{2/3 + \epsilon})$, it seems necessary to in some way obtain cancellation in exponential sums. 
One classical way of doing so is in the form of a {\itshape subconvexity bound}, which Hough and Lee \cite{HL} have recently proved in $t$ aspect
for the classical Shintani zeta function. If their results could also be obtained for the generalizations described in \eqref{eqn_shintani_def_2}, then there is
some prospect of a (very small) additional power savings in our error term. 

A savings in conductor aspect, either in a subconvexity estimate or to the left of the critical strip, could also likely prove useful. 
For example, in Proposition \ref{lem:ft_eval}, we state upper bounds for the Fourier transform \smash{$\widehat{\Psi_{p^2}}$} of a function
defined on $(\mathbb{Z}/p^2 \mathbb{Z})^4$. The absolute value of this Fourier transform is $O(p^{-7})$ on average, and our 
method is able to fully exploit this~fact.  

In our methods, we currently do not exploit sign cancellations in the value of  $\smash{\widehat{\Psi_{p^2}}}$; without absolute values,
this Fourier transform is $O(p^{-10})$ on average. Exploiting such cancellations appears likely to be quite a challenging problem, and would require modifications within
Landau's method (or a substitute), but may be possible, for example with techniques related to those used in \cite{HL}.

Finally, we expect that our methods should allow for a treatment of cubic fields in arithmetic progressions, building on what was done in \cite{TT_rc};
we anticipate error terms of $O(X^{2/3 + \epsilon} m^{\alpha})$, where $m$ is the modulus of the arithmetic progression and $\alpha$ is a
reasonably small positive constant.
}

\subsection*{Organization of the paper} We begin in Section \ref{sec:overview} by recalling necessary background material
on binary cubic forms and the associated Shintani zeta functions. 
In Section \ref{sec:landau}, we apply the method of Landau, 
as formulated in the work of Lowry-Duda and the second and third authors~\cite{LDTT},
 to estimate partial sums of coefficients of congruence Shintani zeta functions. (This takes the place
of ``Davenport's Lemma'' in geometry-of-numbers approaches.)
We provide some additional
preliminaries regarding reducible rings and uniformity estimates in Section \ref{sec:prelim}.

In Section \ref{sec:direct_proof}, we then give our first proof of Theorems \ref{thm_rc} and \ref{thm_rc_torsion}, with error terms of $O(X^{2/3 + \epsilon})$. 
In Section \ref{sec:lc}, 
we prove Theorem \ref{thm_lc_averaged}, 
and thus Theorem \ref{thm_lc}, 
along similar lines,
though with some additional setup and notation required.
In Section \ref{sec:disc_reduce_1}, we prove our discriminant-reducing identity, and use it to give a second proof of Theorem \ref{thm_rc},
this time with the stated error term of $O(X^{2/3} (\log X)^{2.09})$.

Finally, in Section \ref{appendix:count_quadratic}, we prove a result counting {\itshape quadratic} fields with local conditions, which is also needed in the
proofs of our main theorems. A similar such result was proved by Ellenberg, Pierce, and Wood in \cite{EPW}. We generalize their result slightly by allowing local specifications that are not simply given by local splitting types, and we improve the error term in their work by arranging the relevant sum in a manner that enables an application of the P\'olya--Vinogradov inequality. 

\section{Background: Binary cubic forms and Shintani zeta functions}\label{sec:overview}
Following \cite{DH, BST, TT_rc}, we count cubic fields by means of the 
{\itshape Levi--Delone--Faddeev correspondence},
which relates binary cubic forms to cubic rings, and the {\itshape Davenport--Heilbronn correspondence},
which describes maximality conditions for these rings in terms of congruence conditions
on the cubic forms. After discussing the relationship between cubic fields and cubic rings, we 
recall these two correspondences (and the basic definitions). We then introduce
the Shintani zeta functions which we use to prove our counting theorems.

\subsection{Cubic fields, orders, and rings}\label{subsec:cubic_fields}
A {\itshape cubic ring} (over $\Z$) is a commutative ring that is free of rank 3 as
a $\Z$-module. Its {\itshape discriminant} is the determinant of the trace form
$\langle x, y \rangle = \Tr(xy)$. 
Cubic fields are in bijection with their maximal orders, and more generally so are cubic {\itshape \'etale algebras}
(products of number fields of total degree $3$). The discriminant of a cubic \'etale algebra $F$ is, by definition, equal to the discriminant of its maximal order~$\co_F$. 

Our methods most directly count cubic rings, and a cubic ring $R$ is 
the maximal order in a cubic field if and only if it is:
(i) an integral domain; and (ii) a maximal cubic ring (i.e., there is no other cubic ring $R'$ properly containing $R$).
Note that if $R$ is an integral domain, then it is automatically nondegenerate (i.e., $\Disc(R) \neq 0$).
Maximality may be checked locally: a cubic ring $R$ is maximal if and only if $R \otimes_{\Z} \Z_p$ is maximal 
as a cubic ring over $\Z_p$ for all $p$.

\subsection{Binary cubic forms}\label{subsec:cubic_forms}

The lattice of {\itshape integral binary cubic forms} is defined by
\begin{equation}\label{def_vz}
V(\Z) := \{ a u^3 + b u^2 v + c u v^2 + d v^3 \ : a, b, c, d \in \Z \},
\end{equation}
and the {\itshape discriminant} of
$f(u,v)=a u^3 + b u^2 v + c u v^2 + d v^3\in V(\Z)$
is given by the equation
\begin{equation}\label{eqn_disc_formula}
\Disc(f) = b^2 c^2 - 4 a c^3 - 4 b^3 d - 27 a^2 d^2 + 18 abcd.
\end{equation}
The group $\GL_2(\Z)$ acts on $V(\Z)$ by
\begin{equation}
(\gamma \cdot f)(u, v) = \frac{1}{\det \gamma} f((u, v) \cdot \gamma).
\end{equation}
A cubic form $f$ is {\itshape irreducible} if $f(u, v)$ is irreducible as a polynomial over $\Q$, and
{\itshape nondegenerate} if $\Disc(f) \neq 0$.

The 
correspondence of Levi~\cite{levi} and Delone--Faddeev~\cite{DF},
as further extended by Gan, Gross, and Savin~\cite{GGS}
to include the degenerate case, is as follows:

\begin{theorem}[\cite{levi, DF,GGS}]\label{thm:df} There is a canonical, discriminant-preserving 
bijection between the set of $\GL_2(\Z)$-orbits on $V(\Z)$ and the set of isomorphism classes of cubic rings. Under this
correspondence, irreducible cubic forms correspond to orders in cubic fields, and if a cubic form $f$
corresponds to a cubic ring $R$, then $\Stab_{\GL_2(\Z)}(f)$ is isomorphic to $\Aut(R)$.
\end{theorem}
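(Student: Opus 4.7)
The plan is to construct explicit maps in both directions between integral binary cubic forms and based cubic rings, and then descend to $\GL_2(\Z)$-orbits. Given $f = au^3 + bu^2 v + cuv^2 + dv^3 \in V(\Z)$, I would define $R_f$ as the free $\Z$-module on $\{1,\omega,\theta\}$ with multiplication
\begin{align*}
\omega\theta &= -ad, \\
\omega^2 &= -ac + b\omega - a\theta, \\
\theta^2 &= -bd + d\omega - c\theta.
\end{align*}
The first step is to verify associativity, which for a commutative $\Z$-algebra of rank $3$ generated by $\omega$ and $\theta$ reduces to the two identities $\omega\cdot(\omega\theta)=(\omega^2)\cdot\theta$ and $\omega\cdot\theta^2=(\omega\theta)\cdot\theta$; both hold by direct substitution in the table above.

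In the other direction, given a cubic ring $R$ with any $\Z$-basis $(1,\alpha,\beta)$, I would expand $\alpha\beta = r + s\alpha + t\beta$ with $r,s,t\in\Z$ and replace the basis by the ``normalized'' basis $(1,\omega,\theta):=(1,\alpha-t,\beta-s)$, for which a direct computation gives $\omega\theta\in\Z$. The shift $(t,s)$ is uniquely determined by this requirement, so every cubic ring admits a normalized basis, unique up to the $\GL_2(\Z)$-action on $(\omega,\theta)$ followed by the (unique) re-normalizing integer translation. Reading off the structure constants from a normalized basis produces an element $f_R\in V(\Z)$, and the two constructions are manifestly inverse on (ring, normalized basis) pairs.

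To descend to orbits, I would verify that the composite ``$\GL_2(\Z)$-action then re-normalize'' on $(a,b,c,d)$ matches the $\GL_2(\Z)$-action on $V(\Z)$ given in Section~\ref{subsec:cubic_forms}; the $(\det\gamma)^{-1}$ twist in the latter action arises precisely because the product $\omega\theta$ scales by $\det\gamma$ under $\GL_2(\Z)$. This yields the canonical bijection between $\GL_2(\Z)$-orbits on $V(\Z)$ and isomorphism classes of cubic rings.

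For the remaining claims, computing the trace form of $R_f$ in the basis $(1,\omega,\theta)$ yields the symmetric matrix with diagonal entries $(3,\,b^2-2ac,\,c^2-2bd)$ and off-diagonal entries $b,\,-c,\,-3ad$; a direct expansion shows its determinant equals~\eqref{eqn_disc_formula}. If $f$ is irreducible over $\Q$, then $R_f\otimes_\Z\Q\cong\Q[x]/(f(x,1))$ is a cubic field and $R_f$ is an order in it; conversely, a reducible $f$ produces a ring with zero divisors. For the stabilizer claim, any $\gamma\in\Stab_{\GL_2(\Z)}(f)$ corresponds to a second normalized basis of $R_f$ with the same structure constants, so the $\Z$-linear map sending one basis to the other must preserve multiplication, i.e., is a ring automorphism; this gives $\Stab_{\GL_2(\Z)}(f)\cong\Aut(R_f)$. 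The degenerate case $\Disc(f)=0$ is handled by the extension in~\cite{GGS}. The main obstacle throughout is the bookkeeping for associativity and for the discriminant determinant, which is mechanical but tedious.
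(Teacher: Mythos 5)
The paper does not prove Theorem~\ref{thm:df}; it is cited to \cite{levi, DF, GGS} (and the reader is referred to \cite{BST} for a self-contained account). Your construction is exactly the standard Delone--Faddeev/Gan--Gross--Savin proof: the multiplication table you wrote is associative (the two identities $\omega^2\cdot\theta=\omega\cdot(\omega\theta)$ and $\omega\cdot\theta^2=(\omega\theta)\cdot\theta$ both reduce to $-ad\omega$ and $-ad\theta$ respectively), the trace-form matrix you exhibit has determinant equal to \eqref{eqn_disc_formula}, and the normalization/stabilizer arguments are the usual ones. The only thing worth making explicit is that a cubic ring always admits a $\Z$-basis of the form $(1,\alpha,\beta)$, i.e., that $\Z\cdot 1$ is a direct summand: this holds because if $1/n\in R$ for some $n>1$ then $\Z[1/n]\subseteq R$, contradicting finite generation over~$\Z$. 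With that remark, and modulo the routine verification that the $\GL_2(\Z)$-action on forms matches the action on normalized bases (including the $(\det\gamma)^{-1}$ twist), your proposal is correct and follows the same route as the cited sources.
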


\noindent
The Davenport--Heilbronn maximality condition is the following:
\begin{proposition}[\cite{DH}\label{def_up}]
Under the Levi--Delone--Faddeev correspondence, a cubic ring $R$ is maximal if and only if any corresponding
cubic form $f$ belongs to the set $U_p \subset V(\Z)$ for all~$p$, defined by the 
following conditions:
\begin{itemize}
\item
the cubic form $f$ is not a multiple of $p$; and \smallskip \item there is no $\GL_2(\Z)$-transformation of
$f(u, v) = a u^3 + b u^2 v + c u v^2 + d v^3$ such that $a$~is~a multiple of $p^2$ and $b$ is a multiple
of $p$.
\end{itemize}
\end{proposition}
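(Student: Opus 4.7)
The plan is to work prime-by-prime. Maximality of $R$ is a local property, so it suffices to fix a prime $p$ and show that $R\otimes\Z_p$ is maximal iff neither clause defining $U_p$ at $p$ is violated. A cubic order fails to be maximal precisely when it admits a proper cubic overring, and analyzing the $\Z_p$-module quotient of such an overring over $R\otimes\Z_p$ reduces the question to ruling out two minimal configurations: (a) an overring $R'$ of index $p$; and (b) an overring of index $p^2$ whose quotient is $(\Z/p\Z)^2$ and such that no additive subgroup strictly between $R$ and $R'$ is closed under multiplication. Every other non-maximality scenario at $p$ contains one of these two.

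Next I would translate each case via the Levi--Delone--Faddeev recipe. Normalize a basis $(1,\omega,\theta)$ of $R\otimes\Z_p$ with $\omega\theta\in\Z_p$; then LDF gives the multiplication rules
\[
\omega\theta = -ad,\qquad \omega^2 = -ac + b\omega - a\theta,\qquad \theta^2 = -bd + d\omega - c\theta.
\]
In case (a), a $\GL_2(\Z_p)$-change of basis lets me assume the overring is $R\otimes\Z_p + \Z_p\cdot(\omega/p)$; expanding $\omega^2/p^2$ in the new basis using the rule above shows that this is a ring iff $p^2\mid a$ and $p\mid b$. Passing from $\GL_2(\Z_p)$ back to $\GL_2(\Z)$ via approximation on the form side recovers exactly the second clause of the $U_p$-condition. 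In case (b), the analogous expansion applied to $(\omega/p)^2$, $(\theta/p)^2$, and $(\omega/p)(\theta/p)$ forces $p\mid a,b,c,d$, i.e., $p\mid f$; a short further computation identifies the intermediate additive subgroups that happen to be rings with the $\F_p$-points of $\{f/p=0\}$ in $\P^1$, so case (b) occurs without a case-(a) refinement exactly when $p\mid f$ and $f/p$ has no $\F_p$-root.

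Putting the two cases together, $R\otimes\Z_p$ is non-maximal iff either some $\GL_2(\Z)$-transform of $f$ satisfies $p^2\mid a$ and $p\mid b$, or $p\mid f$; this is exactly the negation of the condition $f\in U_p$. The main technical point I would expect is the bookkeeping in case (b): one must verify both that the index-$p^2$ overring is closed under multiplication and that no intermediate additive subgroup is also a ring, without missing configurations or double-counting relative to case (a). Once the multiplication rules above are in hand, this amounts to routine congruence expansions modulo $p$ and $p^2$, and no conceptual obstacle remains.
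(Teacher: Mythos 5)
The paper does not prove Proposition \ref{def_up}: it is attributed directly to Davenport and Heilbronn \cite{DH}, with \cite{BST} cited earlier as a self-contained reference for this circle of results. There is therefore no internal proof to compare against, and I assess your sketch on its own.

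Your route---localize at $p$, reduce to minimal overrings, normalize by $\GL_2(\Z_p)$, and expand products using the Levi--Delone--Faddeev multiplication table---is the standard argument, and it is correct. Two of the points you defer to ``bookkeeping'' are worth making explicit, since they are where a careless version could go wrong. (1) \emph{Reduction to minimal overrings of index $p$ or $p^2$ with elementary quotient.} If $R'$ is a minimal proper overring, then $R + pR'$ is a ring between $R$ and $R'$; by minimality it equals $R$ or $R'$, and Nakayama rules out the latter, so $pR' \subset R$ and hence $R \subsetneq R' \subsetneq \frac{1}{p}R$. This forces $[R':R] \in \{p, p^2\}$ (index $p^3$ would give $R' = \frac{1}{p}R \ni \frac{1}{p}$, not an order), and in the index-$p^2$ case $R'/R$ is killed by $p$, so it is $(\Z/p\Z)^2$. (2) \emph{Eliminating the scalar shifts.} After using $\GL_2(\Z_p)$ to move the relevant line in $M/pM$ (with $M = \Z_p\omega + \Z_p\theta$) to $(1{:}0)$, the generator in case (a) is a priori $\xi = (\omega + x_0)/p$; but the $\theta$-coefficient of $\xi\theta$ is $x_0/p$, so closure forces $x_0 \equiv 0 \pmod p$, recovering $\xi = \omega/p$. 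Likewise in case (b), with $\omega' = (\omega+\alpha)/p$ and $\theta' = (\theta+\beta)/p$, the $\omega'$- and $\theta'$-coefficients of $\omega'\theta'$ are $\beta/p$ and $\alpha/p$, forcing $\alpha \equiv \beta \equiv 0$. With these in hand, the coefficient expansions yield exactly $p^2 \mid a$, $p \mid b$ in case (a) and $p \mid a,b,c,d$ in case (b), and the passage from $\GL_2(\Z_p)$ to $\GL_2(\Z)$ is legitimate because the conditions only depend on the form mod $p^2$ and $\GL_2(\Z)$ surjects onto $\GL_2(\Z/p^2\Z)$.
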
\noindent We say that a cubic form $f$ is {\it maximal at $p$} if $f \in U_p$.

Davenport and Heilbronn's proof of the main term in \eqref{eqn_main_thm} 
can be summarized as follows. One obtains an
asymptotic formula for the number of cubic rings of bounded discriminant by counting lattice points
in a fundamental domain for the action of $\GL_2(\Z)$ on binary cubic forms $f$ over $\R$, subject to the constraint $|\Disc(f)| < X$.
The fundamental domain may be chosen so that almost all rings that are not integral domains correspond to
forms with $a = 0$, and so these may be excluded from the count.
One then multiplies this asymptotic by the 
product of all the local densities of the sets $U_p$. This yields
a heuristic argument for the main term in \eqref{eqn_main_thm}, and some careful analysis allows one to convert this heuristic into a~proof.

We also recall (e.g., from \cite[Section 8.2]{BST}) the notion of the {\itshape content} of a cubic ring and a binary cubic form.
The content $\ct(R)$ of a cubic ring $R$ is the largest integer $n$ such that  $R = \Z + nR'$ for some cubic ring $R'$; the content 
of a binary cubic form is the gcd of its coefficients. As explained in \cite{BST}, a cubic form $f$ and its corresponding 
cubic ring $R$ have the same content.

We also require the following result on overrings and subrings of a cubic ring having a given squarefree index:
\begin{lemma}\label{lemma_overrings} Let $q$ be a squarefree integer.
\begin{enumerate}[{\rm (i)}]
\item
Let $R$ be a cubic ring that is nonmaximal at each prime divisor of $q$ and whose content is coprime to $q$. Then
$R$ is contained in an overring $R'$ with index $q$.
\item For any ring $R'$, the number of $R$ contained in $R'$ with index $q$ is bounded above by
\[
\prod_{\substack{p \mid q \\ p \nmid \ct(R')}} 3 
\prod_{\substack{p \mid q \\ p \mid \ct(R')}} (p + 1).
\]
\end{enumerate}
\end{lemma}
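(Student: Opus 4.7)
The plan is to reduce both parts to local questions at each prime $p\mid q$ and recombine via the Chinese Remainder Theorem, exploiting the squarefreeness of $q$.

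For part~(i), let $f\in V(\Z)$ correspond to $R$ via the Levi--Delone--Faddeev bijection of Theorem~\ref{thm:df}. For each $p\mid q$, the hypothesis that $\ct(R)$ is coprime to $q$ forces $f\not\equiv 0\pmod p$, while nonmaximality of $R$ at $p$ together with Proposition~\ref{def_up} provides $\gamma_p\in\GL_2(\Z_p)$ such that $\gamma_p\cdot f$ satisfies $p^2\mid a$ and $p\mid b$. Composing $\gamma_p$ with $\mathrm{diag}(1/p,1)\in\GL_2(\Q_p)$ yields a $\Z_p$-integral cubic form, which by LDF corresponds to an overring $R'_p\supset R\otimes\Z_p$ of index exactly $p$. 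Since $q$ is squarefree, gluing these local overrings together with $R\otimes\Z_p$ at the remaining primes produces a unique global overring $R'\supset R$ of index $\prod_{p\mid q}p=q$.

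For part~(ii), any index-$q$ subring $R\subset R'$ contains $qR'$ (as $R'/R$ is $q$-torsion), so CRT gives $R'/qR'\cong\prod_{p\mid q}R'\otimes\F_p$, and the index-$q$ subrings of $R'$ are in bijection with tuples $(S_p)_{p\mid q}$ in which each $S_p\subset R'\otimes\F_p$ is a $2$-dimensional unital $\F_p$-subalgebra. I would then bound these counts prime-by-prime. Every cubic $\F_p$-algebra is isomorphic to exactly one of
\[\F_p^3,\ \F_p\times\F_{p^2},\ \F_{p^3},\ \F_p\times\F_p[\varepsilon]/\varepsilon^2,\ \F_p[x]/x^3,\ \F_p[x,y]/(x^2,xy,y^2),\]
and a direct enumeration of their $2$-dimensional unital subalgebras gives $3,\,1,\,0,\,2,\,1,\,p+1$ respectively. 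Using the Levi--Delone--Faddeev structure relations, one further checks that the last algebra arises precisely when every coefficient of the cubic form attached to $R'$ lies in $p\Z$, i.e., when $p\mid\ct(R')$; in the other five cases the count is at most $3$. Multiplying these local bounds over $p\mid q$ yields the stated inequality.

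The principal obstacle is the case-by-case subalgebra enumeration in part~(ii), especially the degenerate algebra $\F_p[x,y]/(x^2,xy,y^2)$, whose $2$-dimensional nilradical has trivial multiplication so that every line through the origin produces a distinct valid unital subalgebra, giving exactly $p+1$. Matching this algebra to the arithmetic condition $p\mid\ct(R')$ via the LDF structure constants closes the dichotomy and completes the proof.
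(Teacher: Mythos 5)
The paper does not give an independent proof of this lemma; it simply cites \cite[Lemma 2.4]{BBP} and \cite[Lemma 3.5]{TT_rc}, so there is no in-text argument to compare against. Your self-contained reconstruction is correct and is, in spirit, the argument underlying those cited results.

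A few remarks on the details. In part (i) the local-to-global step is fine: nonmaximality at $p$ together with coprimality of $\ct(R)$ to $p$ gives, via Proposition~\ref{def_up}, a form $\gamma_p f$ with $p^2\mid a$, $p\mid b$, and the substitution by $\mathrm{diag}(1/p,1)$ divides the discriminant by $p^2$, producing an index-$p$ local overring; gluing these with $R\otimes\Z_\ell$ at $\ell\nmid q$ gives the desired $R'$ since squarefreeness of $q$ keeps the local overrings independent. (Strictly speaking Proposition~\ref{def_up} produces a $\GL_2(\Z)$-matrix, not a $\GL_2(\Z_p)$-matrix, but since you only use it mod $p^2$ this is harmless.) In part (ii) the CRT reduction is correct, but you leave implicit two small points worth recording: (a) that each $S_p$ really is $2$-dimensional, which follows from $R'/R\cong\bigoplus_{p\mid q}\Z/p\Z$ since $q$ is squarefree, so that the image of $R$ in $R'/pR'$ has index exactly $p$; and (b) that the tuple decomposition is a genuine bijection, which uses that the CRT idempotents lie in the image of $\Z\cdot 1\subset R$. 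Your enumeration $3,1,0,2,1,p+1$ of $2$-dimensional unital subalgebras of the six cubic $\F_p$-algebras is right, and the identification of the case $\F_p[x,y]/(x^2,xy,y^2)$ with $p\mid\ct(R')$ follows from the Levi--Delone--Faddeev structure constants $\omega\theta=-ad$, $\omega^2=-ac-b\omega+a\theta$, $\theta^2=-bd-d\omega+c\theta$, which all vanish mod $p$ exactly when $p\mid a,b,c,d$. What your direct argument buys over the paper's citations is transparency and the fact that it makes visible why the bounds $3$ and $p+1$ are the precise worst cases; what the citation route buys is brevity, since those lemmas are already proved in \cite{BBP} and \cite{TT_rc} in essentially this way.
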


\begin{proof} This follows from \cite[Lemma 2.4]{BBP} and \cite[Lemma 3.5]{TT_rc}.
\end{proof}

\subsection{Local conditions for rings and forms}\label{subsec:lc}
Let $R$ be a maximal cubic ring
corresponding to a binary cubic form $f$
(well defined up to $\GL_2(\Z)$-equivalence).
We have just seen in Proposition 
\ref{def_up} that, for each prime $p$, the form
$f$ is required to satisfy
certain congruence conditions modulo $p^2$.

Let $f$ be a binary cubic form in $V(\Z)$ with content prime to $p$. The {\itshape splitting type} of $f$ modulo $p$ is the combinatorial data 
describing the number of roots of $f$ in $\P^1(\overline{\F_p})$, together with multiplicities and 
degrees of their fields of definition. For example, we say that $f$ has splitting type
$(1^2 1)$ if it has a double root and a single root in $\P^1(\F_p)$, or $(3)$ if it is irreducible over $\F_p$. The possible splitting types are those listed in Table~\ref{table:error_exponents}.
The basic result is that, for the five
splitting types enumerated in Table
\ref{table:error_exponents}, a cubic field $F$ has the
specified splitting type if and only if the binary cubic form
$f$ corresponding to the maximal order of $F$ has the analogous splitting type,
indicated with the same notation. 

With this in mind, to each splitting type (mod $p$) we may associate
the characteristic function 
$\Phi : V(\Z) \rightarrow \{0, 1\}$
of those
$f \in V(\Z)$ that have that splitting
type and  are in the set $U_p$
of Proposition \ref{def_up}. This function
factors
through $V(\Z/p^2\Z)$ to define a
function 
$\Phi_{p^2} : V(\Z/p^2 \Z) \rightarrow \{0, 1\}$.
Moreover, for the splitting types
$(111), (21), (3)$, this function factors
through $V(\Z/p\Z)$ to define a function
$\Phi_{p} \ : V(\Z/p\Z) \rightarrow \{0, 1\}$.
(Indeed, any $f$ with an unramified splitting
type (mod $p$) satisfies $p \nmid \Disc(f)$, and hence
is automatically in~$U_p$.)

More generally, let $\Sigma_p$ denote a cubic local specification at $p$. 
The algebras in $\Sigma_p$ are also then
all defined by congruence conditions modulo $p^a$
for some $a>0$. (We may always take $a \leq 2$
when $p > 3$.)
In this case, too, we may associate to $\Sigma_p$ the characteristic function $\Phi_{p^a}\ : V(\Z/p^a\Z) \rightarrow \{0, 1\}$ of those $f\in U_p$ that correspond to the maximal orders of
cubic algebras in $\Sigma_p$.

Let $\Sigma = (\Sigma_p)_p$ denote any collection of cubic local specifications.
We define the 
{\itshape conductor}
$M$ of $\Sigma$
by $M := \prod_{p} p^a$, where the product is over all
primes $p$ for which $\Sigma_p$ is not ordinary and $a$ is the minimal
integer for which the associated characteristic
function factors through $V(\Z/p^a \Z)$.
We define $\Phi_M := \otimes_{p \mid M}
\Phi_{p^a}$.

With this definition, a binary cubic form $f$ corresponds
to a maximal cubic ring $R$ for~which $R\otimes\Q$
satisfies the local specifications of $\Sigma$ 
if and only if (a) 
$\Phi_M(f) = 1$, (b) $f \in U_p$ for each
prime $p \nmid M$, and (c) $f$ does not have splitting type $(1^3)$ at any prime
$p \nmid M$ for which $\Sigma_p=A_p'$.
For further details and explanation, see \cite[Section~4]{BST}, \cite[Section~5]{TT_L}, and
\cite[Section~6.2]{TT_rc}.

\subsection{Shintani zeta functions}\label{sec:shintani_zeta} Our proof will apply
the theory of {\itshape Shintani zeta functions}~\cite{shintani} 
associated to the space of integral binary cubic forms, 
defined by the Dirichlet series

\begin{equation}\label{eqn_shintani_def_1}
\xi^{\pm}(s) := \sum_{\substack{x \in \GL_2(\Z) \backslash V(\Z) \\ \pm \Disc(x) > 0}} \frac{1}{|\Stab(x)|} |\Disc(x)|^{-s},
\end{equation}
which (as we recall below) enjoys an analytic continuation and a functional equation.

By the work of Datskovsky and Wright \cite{DW1, DW2} (see also \cite{TT_L} and
F.\ Sato \cite{f_sato}), we may also consider the following generalization. Let
$m$ be any positive integer, let $\Phi_m: V(\Z/m\Z) \rightarrow \C$ be any function
such that $\Phi_m(\gamma x) = \Phi_m(x)$ for all $\gamma \in \GL_2(\Z/m\Z)$, and 
define 
\begin{equation}\label{eqn_shintani_def_2}
\xi^{\pm}(s, \Phi_m) = \sum_n a^{\pm}(\Phi_m, n) n^{-s} := \sum_{\substack{x \in \GL_2(\Z) \backslash V(\Z) \\ \pm \Disc(x) > 0}} \frac{1}{|\Stab(x)|} \Phi_m(x)
|\Disc(x)|^{-s}
\end{equation}
where we lift $\Phi_m$ to a function on $V(\Z)$.

The {\it dual} zeta functions ${\xi^{\ast,\pm}}(s, \Psi_m) = \sum_n a^{*,\pm}(\Psi_m, n) n^{-s}$ are defined, for each $\GL_2(\Z/m\Z)$-invariant
function $\Psi_m \ : \ V^*(\Z/m\Z) \rightarrow \C$, by a variant of \eqref{eqn_shintani_def_2}: the sum is now over all $\GL_2(\Z)$-orbits in the dual lattice $V^*(\Z)$. To define the discriminant of an element of $V^*(\Z)$, 
note that 
there is a $\GL_2(\Z)$-equivariant
embedding $\iota : V^*(\Z) \hookrightarrow V(\Z)$, whose image consists of those binary cubic forms 
whose middle two coefficients are divisible by $3$. Following Shintani, we 
identify $V^*(\Z)$ with its image in $V(\Z)$, and define the discriminant of 
an element of $V^*(\Z)$ via this embedding. 

We define $\widehat{\Phi_m} : V^*(\Z/m\Z) \rightarrow \C$ by the usual
Fourier transform formula
\begin{equation}\label{eta_hat}
\widehat{\Phi_m}(x) := \frac{1}{m^4} \sum_{y \in V(\Z/m\Z)} \Phi_m(y)
\exp\left(2 \pi i\cdot\frac{[x, y]}{m}\right),
\end{equation}
and lift $\widehat{\Phi_m}$ to a function on $V^*(\Z)$. We use $\iota$ to regard $\widehat{\Phi_m}$
as a function on $V(\Z)$, and write $\widehat{\Phi_m}(x) = 0$ for all $x \in V(\Z)$ not in the image of $\iota$.
With this convention, we have
$\xi^{\ast,\pm}(s,\widehat{\Phi_m})=\xi^\pm(s,\widehat{\Phi_m})$.

The functional equation has a particularly nice form when 
diagonalized, as observed in~\cite{DW2}.  To state the result, we write
$T=\left(\begin{smallmatrix}\sqrt3&1\\\sqrt3&-1\end{smallmatrix}\right)$
and
$\xi(s, \Phi_m)=\left(\begin{smallmatrix}\xi^+(s, \Phi_m)\\\xi^-(s, \Phi_m)\end{smallmatrix}\right)$, 
and introduce a gamma factor $\Delta(s):=\left(\begin{smallmatrix}\Delta^+(s)&0\\0&\Delta^-(s)\end{smallmatrix}\right)$, where
\begin{align*}
\Delta^+(s)&:=
\left(\frac{2^43^6}{\pi^4}\right)^{s/2}
		\Gamma\left(\frac s2\right)\Gamma\left(\frac s2+\frac12\right)
		\Gamma\left(\frac{s}{2}-\frac1{12}\right)
		\Gamma\left(\frac{s}{2}+\frac1{12}\right),\\
\Delta^-(s)&:=
\left(\frac{2^43^6}{\pi^4}\right)^{s/2}
		\Gamma\left(\frac s2\right)\Gamma\left(\frac s2+\frac12\right)
		\Gamma\left(\frac{s}{2}+\frac5{12}\right)
		\Gamma\left(\frac{s}{2}+\frac7{12}\right).
\end{align*}
\begin{theorem}\label{thm:fe}
The Shintani zeta functions $\xi^{\pm}(s, \Phi_m)$ converge absolutely for $\Re(s) > 1$, have analytic continuation to
all of $\C$, are holomorphic except for simple poles at $s = 1$ and $s = 5/6$, and satisfy the functional equation
\begin{equation}\label{eq:FE}
\Delta(1-s)\cdot T\cdot \xi(1-s, \Phi_m )= m^{4s} \left(\begin{smallmatrix}3&0\\0&-3\end{smallmatrix}\right) \cdot \Delta(s)\cdot T\cdot \xi(s, \widehat{\Phi_m}).
\end{equation}
The residues are given by
\begin{equation}\label{eqn_residue}
\Res_{s = 1} \xi^{\pm}(s, \Phi_m) = 
\alpha^{\pm} \mathscr{A}(\Phi_m) +
\beta \mathscr{B}(\Phi_m) \;\,\textrm{ and }\;\,
\Res_{s = 5/6} \ \xi^{\pm}(s, \Phi_m) = \gamma^{\pm} \mathscr{C}(\Phi_m),
\end{equation}
where
\begin{equation}\label{eqn_residues}
\alpha^+ = \frac{\pi^2}{72}, \ \ \alpha^- = \frac{\pi^2}{24}, \ \ \beta = \frac{\pi^2}{24} ,\ \ 
\gamma^+ = \frac{\pi^2 \zeta(1/3)}{9 \Gamma(2/3)^3},
\ \ \gamma^- = \sqrt{3} \gamma^+,
\end{equation}
and for functions of the form $\Phi_m = \otimes_{p^a \mid \mid m} \Phi_{p^a}$, we have
\begin{equation}
\mathscr{A}(\Phi_m) = \prod_{p^a \mid \mid m} \mathscr{A}(\Phi_{p^a}), \ \ 
\mathscr{B}(\Phi_m) = \prod_{p^a \mid \mid m} \mathscr{B}(\Phi_{p^a}), \textrm{ and }\,
\mathscr{C}(\Phi_m) = \prod_{p^a \mid \mid m} \mathscr{C}(\Phi_{p^a}),
\end{equation}
with $\mathscr{A}(\Phi_{p^a})$, $\mathscr{B}(\Phi_{p^a})$, and $\mathscr{C}(\Phi_{p^a})$ given explicitly as follows in the cases of interest below:
\vspace{.075in}\begin{enumerate}[{\rm (i)}]
\item 
For the characteristic function $\Phi_{p^2} : V(\Z/p^2\Z) \rightarrow \{ 0, 1 \}$
of those $x$ that correspond to non-maximal cubic rings over $\Z_p$:
\begin{equation}\label{eq:res_nmax}
\mathscr{A}(\Phi_{p^2}) = p^{-2} + p^{-3} - p^{-5}, \ \ 
\mathscr{B}(\Phi_{p^2}) = 2p^{-2} - p^{-4}, \ \ 
\mathscr{C}(\Phi_{p^2}) = p^{-5/3} + p^{-2} - p^{-11/3}.
\end{equation}
\vskip 4pt
\item 
For the characteristic function $\Phi_{p^2} : V(\Z/p^2\Z) \rightarrow \{ 0, 1 \}$ 
of those $x$ with $p^2 \mid \Disc(x)$:
\begin{equation}\label{eq:res_p2}
\mathscr{A}(\Phi_{p^2}) = 
\mathscr{B}(\Phi_{p^2}) = 2p^{-2} - p^{-4}, \ \ 
\mathscr{C}(\Phi_{p^2}) = p^{-5/3} + 2 p^{-2} - p^{-8/3} - p^{-3}.
\end{equation}
\vskip 4pt
\item 
For the characteristic function $\Phi_{p} : V(\Z/p\Z) \rightarrow \{ 0, 1 \}$ 
of those $x$ with $p \mid \Disc(x)$:
\begin{equation}\label{eq:res_p}
\mathscr{A}(\Phi_{p}) =
\mathscr{B}(\Phi_p) = 
 p^{-1} + p^{-2}- p^{-3}, \ \ 
\mathscr{C}(\Phi_{p}) = p^{-1} + p^{-4/3} - p^{-7/3}.
\end{equation}
\vskip 4pt
\item 
For the characteristic function $\Phi_{p^a} : V(\Z/p^a\Z) \rightarrow \{ 0, 1 \}$ 
corresponding to any of the five local splitting types $\calS_p$:
\begin{equation}\label{eq:res_other}
\mathscr{A}(\Phi_{p^a}) = C_p(\calS_p)  \left( 1 - \frac{1}{p^2} \right)  \left( 1 - \frac{1}{p^3} \right), \ \ 
\mathscr{C}(\Phi_{p^a})  = K_p(\calS_p) \left( 1 - \frac{1}{p^2} \right)  \left( 1 - \frac{1}{p^{5/3}} \right),
\end{equation}
where $C_p(\calS_p)$ and $K_p(\mathcal{S}_p)$ are as in Table $\ref{table:error_exponents}$. We further have 
that $\mathscr{B}(\Phi_{p^a}) = \delta \cdot \mathscr{A}(\Phi_{p^2})$, where $\delta = 3, 1, 0, 1, 0$ for each of these five splitting types, respectively.
\vspace{.1in}\item 
For the function $\Phi_{p} : V(\Z/p\Z) \rightarrow \{ 0, 1, 2, 3, p + 1 \}$,
where $\Phi_p(x)$ equals the number of roots of $x$ in $\P^1(\F_p)$:
\begin{equation}\label{eq:res_roots}
\mathscr{A}(\Phi_{p}) = 1 + p^{-1}, \ \ 
\mathscr{B}(\Phi_p) = 2, \ \
\mathscr{C}(\Phi_p) = 1 + p^{-1/3}.
\end{equation}
\end{enumerate}
\end{theorem}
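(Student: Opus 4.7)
The plan is to obtain the analytic continuation, functional equation, and residue structure from the existing literature on Shintani zeta functions, and then verify the local constants $\mathscr{A}(\Phi_{p^a})$, $\mathscr{B}(\Phi_{p^a})$, $\mathscr{C}(\Phi_{p^a})$ in cases (i)--(v) by direct computation at each prime.

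First, absolute convergence for $\Re(s) > 1$ is elementary from the bound $\#\{x \in \GL_2(\Z)\backslash V(\Z) : |\Disc(x)| < X\} \ll X$ and the fact that $|\Phi_m(x)| = O(1)$. The meromorphic continuation to $\C$, the location and order of the poles, and the matrix functional equation were established by Shintani \cite{shintani} in the case $\Phi_m \equiv 1$, and were extended to general $\GL_2(\Z/m\Z)$-invariant $\Phi_m$ by the adelic analysis of Datskovsky--Wright \cite{DW1,DW2} and F.\ Sato \cite{f_sato}; the $m^{4s}$ conductor factor on the right of \eqref{eq:FE} arises from rescaling the dual lattice. The diagonalized form \eqref{eq:FE} is then obtained by rewriting the Shintani--Sato matrix functional equation in the basis given by $T = \bigl(\begin{smallmatrix}\sqrt{3}&1\\\sqrt{3}&-1\end{smallmatrix}\bigr)$, which simultaneously diagonalizes the two entries $\xi^{\pm}$; this is exactly the maneuver carried out in \cite[Section~2]{DW2} and recorded in \cite{TT_L}.

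For the residues at $s=1$ and $s=5/6$, I would invoke the general residue formulas derived in \cite[Section~2]{TT_L}, which express $\Res_{s=1}\xi^\pm(s,\Phi_m)$ as a sum of two qualitatively distinct contributions: an $\mathscr{A}$-term coming from the generic (irreducible) stratum, and a $\mathscr{B}$-term coming from the reducible stratum of forms $x$ corresponding to rings of the shape $\Z \times R_2$; at $s=5/6$ only the $\mathscr{C}$-term survives. The archimedean constants $\alpha^{\pm}, \beta, \gamma^{\pm}$ in \eqref{eqn_residues} are likewise computed in \cite{DW2,TT_L}. Multiplicativity of $\mathscr{A}, \mathscr{B}, \mathscr{C}$ on tensor products $\Phi_m = \otimes_{p^a\|m}\Phi_{p^a}$ is a direct consequence of the strong approximation/Euler product structure of the underlying adelic integrals.

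It then remains to compute the five pairs of local residues (i)--(v). For (iv) (the splitting-type characteristic functions), the density $\mathscr{A}(\Phi_{p^a})$ is the $p$-adic density of maximal cubic orders over $\Z_p$ with the specified splitting, which is precisely the quantity $C(\calS_p)(1-p^{-2})(1-p^{-3})$; the value $\mathscr{C}(\Phi_{p^a})$ is computed analogously by pairing Bhargava--Shankar--Tsimerman's secondary-term local measure with the splitting, and $\delta$ counts how many of the irreducible $\GL_2(\Z_p)$-orbits contributing to $\mathscr{B}$ have the given splitting. Case (i) is the Davenport--Heilbronn non-maximality density, whose Shintani-zeta residues were computed in \cite[Section~3]{BST} and \cite{TT_L}. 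Cases (ii) and (iii) follow from (i) and (iv) by the identities $\{p^2\mid\Disc\}=\{\text{non-maximal at }p\}\cup\{\text{splitting }(1^3)\}$ and $\{p\mid\Disc\}=\{(1^21),(1^3)\}\cup\{\text{non-maximal at }p\}$, together with inclusion--exclusion on the residues. Case (v) follows from (iv) upon noting that $\sum_{\calS_p}(\#\P^1(\F_p)\text{-roots})\cdot C(\calS_p) = 1+p^{-1}$ etc. The main technical obstacle is the careful bookkeeping in the $\mathscr{B}$-contribution for case (iv): the $\delta$ values $3,1,0,1,0$ require identifying, for each splitting type, precisely which reducible $\GL_2(\Z_p)$-orbits degenerate into it in the $s\to 1$ limit; this demands tracking the boundary orbits of the prehomogeneous vector space over $\Z_p$, which is the most delicate computation of the proof.
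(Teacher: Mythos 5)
Your overall strategy matches the paper's: absolute convergence for $\Re(s)>1$ is elementary, and the analytic continuation, the matrix functional equation, and its diagonalization via $T$, together with the residue structure, are drawn from Shintani, F.~Sato, Datskovsky--Wright, and~\cite{TT_L}. The paper's own proof is essentially a citation (Propositions~8.6, 8.12--8.15 of~\cite{TT_L}, from Section~8, not Section~2 as you wrote) plus remarks reconciling normalizations: the $\GL_2$-versus-$\SL_2$ factor of $\tfrac12$, the placement of the $m^{4s}$ factor, and Datskovsky--Wright's identity $\Delta(1-s)\cdot T\cdot M(s)=\bigl(\begin{smallmatrix}3&0\\0&-3\end{smallmatrix}\bigr)\Delta(s)\cdot T$. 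Your derivation of (ii) and (iii) from (i) and (iv) via the disjoint-union identities $\{p^2\mid\Disc\}=\{\text{non-max at }p\}\sqcup\{(1^3)\text{, max}\}$ and $\{p\mid\Disc\}=\{p^2\mid\Disc\}\sqcup\{(1^21)\text{, max}\}$ is sound and checks out against the displayed formulas for $p>3$.

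However, two of your proposed computations would fail as stated. First, (v) does not follow from (iv): the root-counting function $\eta_p$ lives on all of $V(\Z/p\Z)$, takes nontrivial values on the non-maximal stratum, and equals $p+1$ on the zero form, so restricting to the five maximal splitting types misses a positive contribution. In fact the identity you propose is false: $\sum_{\calS_p}(\#\text{roots})\cdot C_p(\calS_p)=\frac{3\cdot p^2/6+1\cdot p^2/2+0+2p+1}{p^2+p+1}=\frac{(p+1)^2}{p^2+p+1}\neq 1+p^{-1}$. One would instead need to decompose $\eta_p$ over the mod-$p$ factorization types \emph{without} maximality (including the zero form with weight $p+1$), or simply cite \cite[Proposition~8.12]{TT_L} as the paper does. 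Second, your description of $\delta=3,1,0,1,0$ as counting ``irreducible $\GL_2(\Z_p)$-orbits contributing to $\mathscr{B}$'' is not right: the $\mathscr{B}$-residue at $s=1$ arises from the boundary stratum of binary cubic forms having a $\Q_p$-rational linear factor (the reducible orbits), and $\delta$ is the number of $\Q_p$-rational linear factors, i.e.\ the number of $\Q_p$-direct-factors of the corresponding \'etale algebra. These computations are carried out in~\cite[Section~8]{TT_L} and are not casual; this is precisely why the paper cites rather than re-derives them.
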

\noindent
Parts (i)--(v) above are stated in this form in Propositions~8.15, 8.15, 8.14, 8.6 \& 8.13, and~8.12 of~\cite{TT_L}, respectively. (See also~Proposition 5.3 and
Theorem 6.2 of \cite{DW2}.)

We offer some remarks on normalization and how to deduce Theorem~\ref{thm:fe} from the literature. 
We have defined the zeta functions in terms of $\GL_2(\Z)$-orbits, rather than $\SL_2(\Z)$-orbits on $V(\Z)$ as used in \cite{shintani}
and \cite{TT_rc}. Each $\GL_2(\Z)$-orbit either splits into two $\SL_2(\Z)$-orbits of the same discriminant, or is a single $\SL_2(\Z)$-orbit, with $\GL_2(\Z)$-stabilizers of equal size or twice as large, respectively. Therefore the Shintani zeta function defined here is equal to half of that defined in \cite{shintani, TT_rc}. 
In \cite{TT_rc}, the $m^{4s}$ appeared in each term of the dual zeta function, instead of in the functional equation.
The functional equation of the zeta functions may be found in \cite[Theorem Q]{f_sato} or \cite[Theorem 4.3]{TT_L}, in the form
$\xi(1-s, \Phi_m )= m^{4s}M(s) \xi(s, \widehat{\Phi_m})$,
where $M(s)$ is defined by
\[
M(s):=
\frac{3^{6s-2}}{2\pi^{4s}}
\Gamma(s)^2\Gamma\left(s-\tfrac16\right)\Gamma\left(s+\tfrac16\right)
\begin{pmatrix}
\sin 2\pi s&\sin \pi s\\
3\sin \pi s&\sin2\pi s\\
\end{pmatrix}.
\]
This definition differs from \cite{TT_L} by a factor of $3^{3s}$, but coincides with that of Shintani, because of our choice
of normalization of the discriminant function on $V^*(\Z)$.
The diagonalized functional equation \eqref{eq:FE}
is obtained by plugging in the identity
\begin{equation}
\Delta(1-s)\cdot T\cdot M(s)
=\left(\begin{smallmatrix}3&0\\0&-3\end{smallmatrix}\right)
\cdot \Delta(s)\cdot T,
\end{equation}
which (after a change of variables) is due to Datskovsky and Wright \cite[Proposition~4.1]{DW2}.

\section{Error terms for Shintani zeta functions with local conditions via Landau's method}\label{sec:landau}

Let $\xi(s) := \sum_n a(n) n^{-s}$ be a ``zeta function'' with ``good analytic properties''; suppose, for example, that it converges absolutely
in the half-plane $\Re(s) > 1$, has a simple pole at $s = 1$ and no other poles on the line $\Re(s) = 1$,
enjoys a meromorphic continuation to $\C$, and satisfies a functional equation of the ``usual shape''. Then 
a classical method of Landau \cite{Landau1912,Landau1915}
establishes a power-saving estimate of the form
\[
\sum_{n < X} a(n) = X \cdot \Res_{s = 1} ( \xi(s) ) + o_\xi(X),
\]
where the $o$-function depends on $\xi$.
 In our case of interest, we have the following version:

\begin{theorem}\label{thm:landau}
Let $\xi^{\pm}(s, \Phi_m) := \sum_n a^\pm(\Phi_m, n) n^{-s}$ 
be the Shintani zeta function associated to a {\itshape nonnegative}
$\GL_2(\Z/m\Z)$-invariant function $\Phi_m \ : \ V(\Z / m \Z) \rightarrow \C$,
and let 
$\xi^{\pm}(s, \widehat{\Phi_m})
:= \sum_n a^{\pm}(\widehat{\Phi_m}, n) n^{-s}$
be the dual zeta function.
Let 
\begin{equation}
\delta_1 = \delta_1(\Phi_m) := \Res_{s = 1} \xi^{\pm}(s, \Phi_m)
\end{equation}
and
\begin{equation}\label{eqn:def_delta1}
\widehat{\delta_1} = \widehat{\delta_1}(\Phi_m) :=
m^4 \cdot \sup_{N} \frac{1}{N} \sum_{\alpha \in\{\pm\}} \sum_{n < N}
a^{\alpha}(|\widehat{\Phi_m}|, n).
\end{equation}
For a parameter $X > 0$, assume the following two technical conditions: 
\begin{equation}\label{eq:cond_lower_error}
\left|\Res_{s = 5/6} \xi^{\pm}(\Phi_m, s)\right|
\ll X^{1/6} \left|\Res_{s = 1} \xi^{\pm}(\Phi_m, s)\right|,
\end{equation}
\begin{equation}\label{eq:error_smaller}
\widehat{\delta_1} \ll \delta_1 X.
\end{equation}
Then 
\begin{equation}\label{eqn:landau}
N^\pm(X, \Phi_m) :=  \sum_{n < X} a^\pm(\Phi_m, n) = \sum_{\sigma \in \{ 1, \frac{5}{6} \}} \frac{X^{\sigma}}{\sigma} \cdot \Res_{s = \sigma} \xi^{\pm}(s, \Phi_m)
+ O\left( X^{3/5} \delta_1^{3/5} (\widehat{\delta_1})^{2/5} \right),
\end{equation}
where the implied constant does not depend on $\Phi_m$.
\end{theorem}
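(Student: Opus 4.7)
The plan is to adapt Landau's contour-shift method, as deployed for Shintani zeta functions in \cite{LDTT}, but organized so that the implicit constants depend on $\Phi_m$ only through $\delta_1$ and $\widehat{\delta_1}$.

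The first step is to replace the sharp cutoff by a smooth nonnegative weight $\phi_Y$ approximating the indicator of $[0,X]$ with transition width $Y$, where $1 \leq Y \leq X$ will be chosen later. Hypothesis \eqref{eq:cond_lower_error} and Theorem~\ref{thm:fe} together yield the elementary bound $\sum_{n < N} a^{\pm}(\Phi_m, n) \ll \delta_1 N$ for all $N \geq 1$ (since the $s = 5/6$ residue contributes at most on the order of the $s = 1$ residue at scale $X$), so the smoothed sum differs from $N^{\pm}(X, \Phi_m)$ by $O(\delta_1 Y)$. Mellin inversion then rewrites the smoothed sum as
\[
\frac{1}{2\pi i}\int_{(c)} \xi^{\pm}(s,\Phi_m)\,X^{s}\,\widetilde{\phi_Y}(s)\,ds,\qquad c > 1.
\]

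The second step is to shift the contour past the poles at $s = 1$ and $s = 5/6$, producing the two main terms of \eqref{eqn:landau}, and then continue to a vertical line $\Re(s) = -\eta$ for some small $\eta > 0$. On this shifted line, the diagonalized functional equation \eqref{eq:FE} rewrites the integrand as $m^{4(1-s)}$ times the gamma quotient $\Delta(1-s)\,\Delta(s)^{-1}$ times a linear combination of $\xi^{\pm}(1-s, \widehat{\Phi_m})$. Since $\Re(1-s) = 1 + \eta$ lies in the region of absolute convergence of the dual zeta function, the bound
\[
|\xi^{\pm}(1-s, \widehat{\Phi_m})| \ll \widehat{\delta_1}/m^{4}
\]
follows directly from partial summation applied to the definition \eqref{eqn:def_delta1} of $\widehat{\delta_1}$. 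Stirling's formula, applied to the four gamma factors composing $\Delta^{\pm}$, then yields polynomial growth for the gamma quotient on the shifted line.

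The third step is to estimate the resulting shifted contour integral, using rapid decay of $\widetilde{\phi_Y}(s)$ beyond the cutoff $|t| \asymp X/Y$ together with the Stirling bound. This produces an estimate of the form $\widehat{\delta_1}\,X^{-\eta}(X/Y)^{\alpha}$ for an exponent $\alpha$ dictated by the four gamma factors. Balancing this against the smoothing error $\delta_1 Y$ and sending $\eta \to 0^{+}$ leads, as in the classical Landau bound for degree-four $L$-functions, to the exponent $3/5$ in $X$ and the weighted geometric mean $\delta_1^{3/5}\widehat{\delta_1}^{2/5}$, giving the claimed error.

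The main obstacle will be maintaining uniformity in $\Phi_m$ throughout: every intermediate estimate (coefficient bounds, Stirling, truncation of $\widetilde{\phi_Y}$, and the optimization of $Y$) must be arranged so that $\Phi_m$ enters only through $\delta_1$ and $\widehat{\delta_1}$, with no hidden dependence. Hypothesis \eqref{eq:cond_lower_error} is precisely what guarantees that the secondary residue does not corrupt the elementary coefficient bound used in the smoothing step, while hypothesis \eqref{eq:error_smaller} ensures that the optimal smoothing scale satisfies $1 \leq Y \leq X$, so that the stated error is genuinely an improvement over the trivial estimate $O(\delta_1 X)$.
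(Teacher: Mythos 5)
The overall framework you describe (Mellin inversion, contour shift past the two poles, functional equation, balance a smoothing error against a dual-side error) is indeed Landau's method and is the route taken in the paper, which follows the Chandrasekharan--Narasimhan/\cite{LDTT} version with Riesz means $(X-n)^k$ and finite differencing rather than a smooth cutoff. However, there is a genuine gap in your Step 3 that affects the exponent. After the functional equation, you bound $|\xi^\pm(1-s,\widehat{\Phi_m})|\ll\widehat{\delta_1}/m^4$ via partial summation, apply Stirling to the gamma quotient $\Delta(1-s)/\Delta(s)$ (which is $\asymp |t|^{2+4\eta}$ on $\Re(s)=-\eta$, since $\Delta^\pm$ has four $\Gamma$-factors of the form $\Gamma(s/2+c)$), and integrate against $|\widetilde{\phi_Y}(s)|\ll X^{\Re(s)}/|s|$ over $|t|\lesssim X/Y$. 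Taking $\eta\to0^+$, that integral is $\asymp\widehat{\delta_1}(X/Y)^2$, \emph{not} $\widehat{\delta_1}(X/Y)^{3/2}$. Balancing $\delta_1 Y$ against $\widehat{\delta_1}(X/Y)^2$ gives $Y=(\widehat{\delta_1}/\delta_1)^{1/3}X^{2/3}$ and an error of $\delta_1^{2/3}\widehat{\delta_1}^{1/3}X^{2/3}$ --- the convexity exponent $d/(d+2)=2/3$ for degree $d=4$, not the Landau exponent $(d-1)/(d+1)=3/5$ claimed in \eqref{eqn:landau}. The extra saving of $(X/Y)^{1/2}$ comes precisely from the oscillation of the gamma quotient, i.e.\ from the decay $J_\nu(y)\ll y^{-1/2}$ of the Bessel-type kernel one obtains after exchanging the dual Dirichlet series with the $s$-integral; the paper's proof carries this out explicitly (``approximating them by Bessel functions, for which classical estimates are available''), whereas ``Stirling + rapid decay of $\widetilde{\phi_Y}$'' supplies only the magnitude and therefore cannot recover $\alpha=3/2$.

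There is also a smaller issue in Step 1: the bound $\sum_{n<N}a^\pm(\Phi_m,n)\ll\delta_1 N$ (even granting it) only shows that the contribution from $n\in(X-Y,X+Y)$ is $\ll\delta_1 X$, not $\ll\delta_1 Y$. To legitimately discard a window of width $Y$ with error $O(\delta_1 Y)$ one must use a majorant/minorant sandwich and nonnegativity of the coefficients --- which is exactly where the paper invokes the hypothesis that $\Phi_m\geq 0$, in the finite-differencing step via \cite[(27)]{LDTT}. So both the smoothing error and the exponent improvement are tied to the same mechanism (finite differencing plus the nonnegativity), and your proposal as written would at best establish a weaker version of \eqref{eqn:landau} with $X^{2/3}\delta_1^{2/3}\widehat{\delta_1}^{1/3}$ in place of $X^{3/5}\delta_1^{3/5}\widehat{\delta_1}^{2/5}$.
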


In \cite{LDTT}, Lowry-Duda and the second and third authors
gave a ``uniform version'' of Landau's method, closely following the 
exposition of Chandrasekharan and Narasimhan \cite{CN},
where the implied constants in \eqref{eqn:landau} depend only on 
the ``shape of the functional
equation'', and not on $\Phi_m$. (This was also done in a more {\itshape ad hoc} manner in \cite{TT_rc}.) The proof
is a bit easier if we incorporate the Datskovsky-Wright diagonalization (although this is not required; see \cite[Theorem 3]{SS}). 
Here we summarize the proof in 
 \cite{LDTT} while describing how to accommodate the diagonalization.
 
\begin{proof}
We apply Landau's method to the diagonalized zeta functions $\sqrt{3} \xi^+(s, \Phi_m)
\pm \xi^-(s, \Phi_m)$; we write $\xi(s, \Phi_m) = \sum_n a(\Phi_m, n) n^{-s}$ for either of them. Theorem \ref{thm:fe} gives (separate) functional
equations relating them to
$\xi(s,\widehat{\Phi_m}):=\sum a(\widehat{\Phi_m},n)n^{-s}=
\sqrt{3} {\xi}^+(s, \widehat{\Phi_m}) \pm 
{\xi}^-(s, \widehat{\Phi_m})$.

For each positive integer $k$, a variant of Perron's formula states that 
\begin{equation}\label{eqn:perron}
\frac{1}{\Gamma(k+1)}
\sum_{n < X} a(\Phi_m, n) (X - n)^{k} = \frac{1}{2 \pi i}
\int_{2 - i \infty}^{2 + i \infty} \xi(s, \Phi_m) \frac{X^{s+k}}{s(s + 1) \cdots (s + k)}
ds.
\end{equation}
Shifting \eqref{eqn:perron} to a line to the left of the critical strip,
the two main terms in \eqref{eqn:landau} come from the poles of $\xi(s)$. As $\xi(s)$ grows polynomially on this line 
as $|\Im(s)| \rightarrow \infty$, for
$k$ sufficiently large the integral in \eqref{eqn:perron} will converge absolutely.

We then use the functional equation, expand the dual zeta function $\xi(s, \widehat{\Phi_m})$ as an absolutely convergent Dirichlet series, and switch the order of summation
and integration.
Then~\eqref{eqn:perron} becomes
\[
\sum_{\sigma \in \{ 1, \frac{5}{6} \}}
\frac{X^{k+\sigma}}{\sigma(\sigma+1)\cdots(\sigma+k)} \cdot \Res_{s = \sigma} \xi(s, \Phi_m)
+ \frac{X^k}{k!}\cdot\xi(0, \Phi_m)
+m^{4k+4}
\sum_{n\geq1}
\frac{a(\widehat{\Phi_m},n)}{n^{k+1}}
I_k\left(\frac{nX}{m^4}\right),
\]
where
\[
I_k(t)
:=\pm 3 \cdot\frac{1}{2\pi i} \int_{c-i\infty}^{c+i\infty}
	\frac{\Delta^\pm(s)}{\Delta^\pm(1-s)}
	\cdot
	\frac{\Gamma(1-s)}{\Gamma(k+2-s)}	
	t^{k+1-s}ds
\]
for $c = \frac{9}{8}$, say.
A straightforward argument shows that $\xi(0, \Phi_m) \ll \widehat{\delta_1}(\Phi_m)$ (see \cite[Section~3]{LDTT}).
The integral $I_k(t)$ and its derivatives are treated
by approximating them by Bessel functions,
for which classical estimates are available.

We therefore obtain estimates for the smoothed partial sums on the left side of \eqref{eqn:perron},
for each of the two zeta functions  \smash{$\sqrt{3} \xi^+(s, \Phi_m)
\pm \xi^-(s, \Phi_m)$}. The last step in \cite{LDTT} is to consider~\eqref{eqn:perron}
with $X$ replaced by $X + iY$ for $0 \leq i \leq k$
for a parameter $Y$, and recover a formula
for $\sum_{n < X} a(\Phi_m, n)$ by finite differencing, within an error term depending on $Y$ (see \cite[Lemma~8]{LDTT}).

As described in \cite[(27)]{LDTT}, part of the error term may be eliminated if the coefficients $a(\Phi_m, n)$ are all nonnegative.
We therefore undo the diagonalization first, obtaining partial sum estimates $\Gamma(k+1)^{-1}\sum_{n < X} a^\pm(\Phi_m, n) (X - n)^{k}$
for the two zeta functions $\xi^\pm(s, \Phi_m)$ with the same error terms, up to an implied constant. These $a^\pm(\Phi_m, n)$
are nonnegative, since~$\Phi_m$ is assumed to be so, and we then proceed with the finite differencing.

The error term is simplified exactly as in the proof of \cite[Theorem 2]{LDTT}, with the factor of $m^4$ in 
\eqref{eqn:def_delta1} arising from our normalization of the dual zeta function above. The condition~\eqref{eq:cond_lower_error}
ensures that the error term arising from applying finite differencing to the residual term at $s = \frac56$ may be subsumed into the other error terms.

In conclusion, as long as $Y\ll X$, we have
\begin{equation}\label{eqn:landau-with-Y}
N^\pm(X, \Phi_m)
= \sum_{\sigma \in \{ 1, \frac{5}{6} \}} \frac{X^{\sigma}}{\sigma} \cdot \Res_{s = \sigma} \xi^{\pm}(s, \Phi_m)
+ O\left(\delta_1Y + \widehat{\delta_1}X^{3/2}Y^{-3/2} \right).
\end{equation}
The choice
$Y=X^{3/5}\delta_1^{-2/5}(\widehat{\delta_1})^{2/5}$
equalizes the two error terms,
with the condition $Y \ll X$ being equivalent to
\eqref{eq:error_smaller}, 
and we obtain the desired result.
\end{proof}

We also have the following ``average version'':

\begin{theorem}\label{thm:landau_average}
Keeping the notation of Theorem $\ref{thm:landau}$, consider an arbitrary finite set of pairs
$(\Phi_{m_i}, X_i)_{i \in \mathcal{I}}$, where each 
$\Phi_{m_i}$ is a function satisfying the hypotheses of Theorem $\ref{thm:landau}$ other than \eqref{eq:error_smaller}, and each $X_i$ is a positive real number with $X_i \asymp X$
for some fixed $X$. Then
\begin{equation}\label{eq:landau_2}
\sum_{i \in \mathcal{I}} \Biggl| N^\pm(X_i, \Phi_{m_i}) - \sum_{\sigma \in \{ 1, \frac{5}{6} \}} \frac{X_i^{\sigma}}{\sigma} \cdot \Res_{s = \sigma} \xi^{\pm}(s, \Phi_{m_i}) \Biggr|
\ll 
X^{3/5}
\left(
\sum_{i \in \mathcal{I}}
 \delta_1(\Phi_{m_i}) \right)^{3/5}
 \!\left(
 \sum_{i \in \mathcal{I}}
 \widehat{\delta_1}(\Phi_{m_i}) \right)^{2/5}
\end{equation}
provided that, in place of \eqref{eq:error_smaller}, we have 
\begin{equation}\label{eq:error_smaller_2}
\sum_{i \in \mathcal{I}} \widehat{\delta_1}(\Phi_{m_i}) \ll X \sum_{i \in \mathcal{I}} \delta_1(\Phi_{m_i}).
\end{equation}

\end{theorem}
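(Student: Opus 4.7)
The plan is to re-run the proof of Theorem~\ref{thm:landau} uniformly in $i$, deferring the optimization of the auxiliary parameter $Y$ until after summing over $i \in \mathcal{I}$. Concretely, for each individual $i$ the argument leading to \eqref{eqn:landau-with-Y} applies verbatim to $\Phi_{m_i}$ and $X_i$, yielding
\begin{equation*}
N^\pm(X_i, \Phi_{m_i}) - \sum_{\sigma \in \{1, \frac{5}{6}\}} \frac{X_i^\sigma}{\sigma}\cdot \Res_{s=\sigma} \xi^\pm(s, \Phi_{m_i}) \;=\; O\bigl(\delta_1(\Phi_{m_i})\, Y + \widehat{\delta_1}(\Phi_{m_i})\, X_i^{3/2} Y^{-3/2}\bigr),
\end{equation*}
valid for any common $Y \ll X$. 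The essential point I would emphasize is that the implied constant here depends only on the shape of the functional equation in Theorem~\ref{thm:fe}, independently of $i$ and $\Phi_{m_i}$. This uniformity, proved in \cite{LDTT} and recalled in the proof of Theorem~\ref{thm:landau}, is what licenses using a single global parameter $Y$ simultaneously for every $i$.

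Next I would take absolute values, sum over $i \in \mathcal{I}$, and use $X_i \asymp X$ to replace each $X_i^{3/2}$ by $X^{3/2}$. Writing $S := \sum_{i\in\mathcal{I}} \delta_1(\Phi_{m_i})$ and $\widehat{S} := \sum_{i\in\mathcal{I}} \widehat{\delta_1}(\Phi_{m_i})$, this gives
\begin{equation*}
\sum_{i \in \mathcal{I}} \Bigl| N^\pm(X_i, \Phi_{m_i}) - \sum_{\sigma \in \{1,\frac{5}{6}\}} \frac{X_i^\sigma}{\sigma}\cdot\Res_{s=\sigma} \xi^\pm(s, \Phi_{m_i}) \Bigr| \;\ll\; Y\cdot S + X^{3/2} Y^{-3/2}\cdot \widehat{S}.
\end{equation*}
Balancing the two terms selects $Y \asymp X^{3/5} (\widehat{S}/S)^{2/5}$, producing an overall error of size $X^{3/5}\, S^{3/5}\, \widehat{S}^{2/5}$, which is precisely the bound claimed in \eqref{eq:landau_2}.

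The final check is that this choice of $Y$ satisfies the admissibility constraint $Y \ll X$. Substituting, this becomes $(\widehat{S}/S)^{2/5} \ll X^{2/5}$, equivalently $\widehat{S} \ll X\cdot S$, which is exactly the hypothesis \eqref{eq:error_smaller_2}. I do not foresee any genuine obstacle beyond this bookkeeping: the real work has been done in Theorem~\ref{thm:landau}, and the averaged statement is essentially a corollary of the fact that its error term is produced as a sum of two contributions, one linear in $\delta_1(\Phi_m)$ and one linear in $\widehat{\delta_1}(\Phi_m)$, coupled by a free parameter $Y$ that can be chosen after summation. The nonnegativity of each $\Phi_{m_i}$ plays exactly the same role as in Theorem~\ref{thm:landau}, eliminating one source of error at the finite-differencing step via \cite[(27)]{LDTT}, and condition~\eqref{eq:cond_lower_error} is needed of each individual $i$ for the same reason as before.
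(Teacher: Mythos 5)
Your proposal is correct and is precisely the paper's own argument: the paper proves Theorem~\ref{thm:landau_average} by running the proof of Theorem~\ref{thm:landau} with the single choice $Y = X^{3/5}\bigl(\sum_i \delta_1(\Phi_{m_i})\bigr)^{-2/5}\bigl(\sum_i \widehat{\delta_1}(\Phi_{m_i})\bigr)^{2/5}$ applied uniformly for all $i$, which is exactly the balancing you carry out after summing \eqref{eqn:landau-with-Y}. You have simply written out the bookkeeping (the uniformity of the implied constant, the use of $X_i \asymp X$, and the verification that $Y \ll X$ is equivalent to \eqref{eq:error_smaller_2}) that the paper leaves implicit.
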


\begin{proof}
The proof is identical to that of Theorem~\ref{thm:landau}, except that in \eqref{eqn:landau-with-Y} we 
make the choice 
\[
Y=
X^{3/5}
\left( \sum_{i \in \mathcal{I}} \delta_1(\Phi_{m_i}) \right)^{-2/5} 
\left( \sum_{i \in \mathcal{I}} \widehat{\delta_1}(\Phi_{m_i}) \right)^{2/5} 
\] for all $i \in \mathcal{I}$
simultaneously.
\end{proof}

\begin{remark}
The condition \eqref{eq:cond_lower_error} is immediate from 
the residue formulas in Theorem \ref{thm:fe}, for $m \ll X^{1 - \epsilon}$ and 
any $\Phi_m$ whose residues are described there. 
Outside of Remark \ref{remark:other_split}, this covers all Shintani
zeta functions introduced in this paper.
\end{remark}

\section{Additional preliminaries on reducible rings and uniformity estimates}\label{sec:prelim}

\subsection{Reducible rings and their weighting}

By the Levi--Delone--Faddeev correspondence, the Shintani zeta functions may also be written as
\begin{equation}\label{eqn:shintani_df}
\xi^{\pm}(s) := \sum_{\pm \Disc(R) > 0} \frac{1}{|\Aut(R)|} |\Disc(R)|^{-s},
\end{equation}
where the sum is over all isomorphism classes of cubic rings. The main work (here, and also in previous papers on Davenport--Heilbronn)
is to sieve for maximality. After doing so, we will obtain a count for the quantity
\begin{equation}
N^{\pm}_{\leq 3}(X) := \sum_{0 < \pm \Disc(F) < X} |\Aut(F)|^{-1},
\end{equation}
where the sum is over \'etale algebras $F$ of degree $3$, i.e.,
direct products of number fields whose degrees sum to $3$.
Note that these are in bijection with
number fields of degrees at most $3$, and that the automorphism group of such an algebra is naturally isomorphic to that of its maximal order. 
For any collection of cubic local specifications $\Sigma$,
we define $N_{\leq 3}^{\pm}(X, \Sigma)$ 
analogously. In Proposition \ref{prop:red1}, we relate these quantities to the quantities $N^{\pm}_3(X)$ and $N^{\pm}_3(X, \Sigma)$ introduced in Theorems \ref{thm_rc}
and \ref{thm_lc}.

As in \cite{cohn}, the following lemma 
follows from 
Hasse's characterization \cite{hasse} of 
cyclic cubic field discriminants:
\begin{lemma}\label{cohn} Let
$t$ be
a squarefree integer.
Then the number of cyclic cubic fields $F$ with $|\Disc(F)| < X$ 
and $t \mid \Disc(F)$
is $O(X^{1/2}/ t^{1 - \epsilon})$.
\end{lemma}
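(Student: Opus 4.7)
The approach is to parametrize cyclic cubic fields by their conductors using Hasse's classical theorem, and then reduce the problem to a standard divisor-sum estimate. Recall that Hasse's theorem states that the discriminants of cyclic cubic fields are exactly the integers of the form $f^2$, where $f = 3^a p_1 \cdots p_k$ with $a \in \{0, 2\}$ and the $p_i$ distinct primes $\equiv 1 \pmod 3$. Moreover, the number of cyclic cubic fields of conductor exactly $f$ is at most $2^{\omega(f)}$; this follows from class field theory (the $3$-part of the narrow ray class group mod $f$ is an elementary abelian $3$-group of rank $\omega(f)$, up to a bounded factor coming from the prime $3$) together with a short M\"obius inversion to pass from ``conductor dividing $f$'' to ``conductor exactly $f$''.

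Under this parametrization, the constraint $|\Disc(F)| < X$ becomes $f < X^{1/2}$. Since $t$ is squarefree and $t \mid f^2$, every prime of $t$ must divide $f$, so in fact $t \mid f$. Writing $f = tg$ with $g < X^{1/2}/t$, the count of cyclic cubic fields in question is bounded above by
\[
\sum_{g < X^{1/2}/t} 2^{\omega(tg)} \ \leq\ 2^{\omega(t)} \sum_{g < X^{1/2}/t} 2^{\omega(g)}.
\]

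The final step applies the standard estimate $\sum_{n \leq Y} 2^{\omega(n)} \ll Y \log Y$ (which follows from the identity $\sum_n 2^{\omega(n)} n^{-s} = \zeta(s)^2 / \zeta(2s)$, or simply from $2^{\omega(n)} \leq \tau(n)$). This yields a bound of $O\bigl(2^{\omega(t)} (X^{1/2}/t) \log X\bigr)$. Since $2^{\omega(t)} \ll_\epsilon t^\epsilon$ and $\log X \ll_\epsilon X^\epsilon$, the stated bound $O(X^{1/2}/t^{1-\epsilon})$ follows, with implicit constants depending on $\epsilon$. There is no substantive obstacle here: the argument is essentially a warm-up, and the only mild care required is the correct enumeration of cyclic cubic fields per conductor (for which any crude upper bound polynomial in $\omega(f)$ suffices) and the handling of the prime $3$, which at worst introduces a bounded multiplicative constant and does not affect the asymptotic.
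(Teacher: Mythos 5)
Your approach—parametrizing cyclic cubic fields by conductor via Hasse's theorem and reducing to a divisor-type sum—is exactly the route the paper alludes to by citing Cohn and Hasse, and the structure of the argument (at most $2^{\omega(f)}$ fields of conductor $f$; $t$ squarefree and $t\mid f^2$ forces $t\mid f$; write $f=tg$) is correct. However, the final estimate has a genuine slip. You bound $\sum_{g<X^{1/2}/t}2^{\omega(g)}$ by the unrestricted sum over all integers $g$, which is $\ll (X^{1/2}/t)\log X$, and then write ``$\log X\ll_\epsilon X^\epsilon$'' to conclude. But the target bound is $O(X^{1/2}/t^{1-\epsilon})$ with no $X^\epsilon$; absorbing $\log X$ into $X^\epsilon$ yields only $O(X^{1/2+\epsilon}/t^{1-\epsilon})$, which is strictly weaker than what the lemma states. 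In particular, at $t=1$ the lemma asserts the sharp $O(X^{1/2})$ (Cohn's asymptotic), and your argument gives only $O(X^{1/2}\log X)$.

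The missing ingredient is that the sum over $g$ should run only over \emph{admissible} conductors: squarefree products of primes $\equiv 1\pmod 3$, times possibly $9$. These primes have Dirichlet density $1/2$, so the generating Dirichlet series $\prod_{p\equiv 1\,(3)}(1+2p^{-s})$ has only a \emph{simple} pole at $s=1$ (it equals $\zeta(s)L(s,\chi_3)$ times a factor holomorphic and nonvanishing for $\Re(s)>1/2$), and a standard Tauberian argument gives
\[
\sum_{\substack{g\leq Y\\ g\ \text{admissible}}}2^{\omega(g)}\ll Y,
\]
with no logarithm. The heuristic is that the extra $2^{\omega(g)}$, which on average contributes $\log Y$ over all $g$, is exactly compensated by the thinning of admissible $g$ (density $\sim c/\sqrt{\log Y}$ squared against the $\log$). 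Plugging this back in gives $O\bigl(2^{\omega(t)}X^{1/2}/t\bigr)=O_\epsilon(X^{1/2}/t^{1-\epsilon})$ as required. (Alternatively, the lemma could harmlessly be restated with $X^{1/2+\epsilon}$ in the numerator, which suffices for its use in Proposition~\ref{prop:red1} and would make your argument as written complete.)
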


We also require an estimate for the contribution
of quadratic fields, optionally with local conditions.
Suppose, as before, that 
$\Sigma = (\Sigma_p)_p$ is a collection of cubic local specifications
that are ordinary away from an integer $M$, the conductor of $\Sigma$.
We say that a {\itshape quadratic} field $F$
satisfies the cubic local specifications of $\Sigma_p$ if the cubic \'etale
algebra $F \times \Q$ does.

Except for $\Q^3$, reducible cubic \'etale algebras take 
the form $F\times\Q$ where $F$ is a quadratic field. For the reducible algebra $F\times\Q$,
the
only nonempty conditions that $\Sigma$ imposes on $F$ are those
at primes dividing $M$. We write $U$, $s$, and $r$ for the product
of those prime
factors~$p$ of $M$ for which, respectively: $\Sigma_p$ contains at least one unramified algebra;
$\Sigma_p$ consists of a single algebra $F\times \Q_p$ where $F$ is ramified; $\Sigma_p$ consists of all algebras $F\times \Q_p$ where $F$ is~ramified.

The following proposition counting quadratic fields satisfying a collection of local specifications is a slight generalization 
of 
\cite[Lemma 6.1]{TT_rc} and
\cite[Proposition 8.1]{EPW}, and with an improved
error term. We will give a complete and self-contained
proof in Section~\ref{appendix:count_quadratic}.

\begin{proposition}\label{lem:quad_count}
Write $N_2^{\pm}(X, \Sigma)$ for the number of isomorphism classes of quadratic fields $F$ with $0 < \pm \Disc(F) < X$ 
that satisfy a collection of local specifications $\Sigma$ with $M$, $U$, $s$, and $r$ as above.
Then
\begin{equation}
N_2^{\pm}(X, \Sigma) = \prod_p 
c_p(\Sigma_p) \cdot \frac{3}{\pi^2} X
+O\left(X^{1/2} \frac{U^{1/4}}{s^{1/4}r^{1/2}}(Usr)^\epsilon\right),
\end{equation}
where
\begin{equation}
c_p(\Sigma_p) = \frac{\text{\footnotesize $\displaystyle\sum_{F\,:\,F\times\Q_p\in\Sigma_p}$}
\!\textstyle\frac{1}{\Disc_p(F)}\frac{1}{|\Aut(F)|}}{\text{\footnotesize$\displaystyle\sum_{\scriptstyle F\,:\,F\times\Q_p\in A_p}$} \!\textstyle\frac{1}{\Disc_p(F)}\frac{1}{|\Aut(F)|} }.
\end{equation}
\end{proposition}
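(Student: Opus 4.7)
The plan is to translate the quadratic field count into a count of squarefree integers satisfying divisibility and Kronecker--symbol conditions, and then to bound the resulting character sums by P\'olya--Vinogradov. Every quadratic field $F$ is uniquely of the form $\Q(\sqrt{d})$ for a squarefree integer $d\ne 0,1$, and its discriminant is $d$ or $4d$ according to $d\pmod 4$. For each odd prime $p\mid M$, the condition $F\otimes\Q_p\in\Sigma_p$ translates to arithmetic conditions on $d$: for $p\mid r$, simply $p\mid d$; for $p\mid s$, $p\mid d$ together with $\leg{d/p}{p}=\eta_p$ for a fixed sign $\eta_p$ selecting the chosen ramified extension; for $p\mid U$, $p\nmid d$ together with $\leg{d}{p}=\eta_p$ picking split versus inert. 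The case $p=2$, where $\Sigma_2$ singles out one among at most six ramified possibilities, reduces similarly to a congruence condition on $d\pmod 8$ and contributes only a bounded factor to the conductor.

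Next I substitute $d=sr\cdot e$ with $\gcd(e,sr)=1$ and $e$ squarefree, so that $|e|\le X/(sr)$. After this substitution, each Legendre condition at $p\mid sU$ translates into a condition of the form $\leg{e}{p}=\eta_p'$ for a modified sign $\eta_p'$ depending on $\leg{sr}{p}$. Expanding each indicator as $\tfrac12\bigl(1+\eta_p'\leg{e}{p}\bigr)$, and expanding the mod-$8$ condition in real Dirichlet characters, writes the characteristic function of the local specifications as a linear combination of $O((Usr)^\epsilon)$ terms of the form $\chi_D(e)$, where $\chi_D$ ranges over real characters of conductor dividing $8Us$.

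The trivial-character term $\chi_D=1$ produces the main term. A standard squarefree-sieve computation yields
\[
\#\{e\le X/(sr):\mu^2(e)=1,\ \gcd(e,sr)=1\}=\tfrac{6}{\pi^2}\cdot\tfrac{X}{sr}\prod_{p\mid sr}\bigl(1+\tfrac{1}{p}\bigr)^{-1}+O\bigl((X/(sr))^{1/2}\bigr),
\]
which, combined with the mod-$8$ factor and the sign condition, assembles into the claimed main term $\prod_p c_p(\Sigma_p)\cdot\tfrac{3}{\pi^2}X$, with a contribution of $O(X^{1/2}(sr)^{-1/2+\epsilon})$ to the error that is absorbed into the final bound. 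For each nontrivial $\chi_D$, I apply $\mu^2(e)=\sum_{k^2\mid e}\mu(k)$ to write
\[
\sum_{\substack{e\le X/(sr)\\ \mu^2(e)=1\\ \gcd(e,sr)=1}}\chi_D(e)=\sum_{\substack{k\le\sqrt{X/(sr)}\\ \gcd(k,8Usr)=1}}\mu(k)\chi_D(k)^2\sum_{\substack{f\le X/(srk^2)\\ \gcd(f,sr)=1}}\chi_D(f),
\]
and bound the inner sum either by P\'olya--Vinogradov, giving $O((Us)^{1/2+\epsilon})$, or trivially by $X/(srk^2)$, whichever is smaller. Balancing these at $K=X^{1/2}(sr)^{-1/2}(Us)^{-1/4}$ gives total bound
\[
\ll X^{1/2}(Us)^{1/4}(sr)^{-1/2}(Usr)^{\epsilon}=X^{1/2}U^{1/4}s^{-1/4}r^{-1/2}(Usr)^\epsilon,
\]
which is exactly the claimed error term. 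The main obstacle is the bookkeeping at $p=2$, where $\Sigma_2$ can pick out one of six ramified local extensions rather than one Legendre class, and the verification that the main term reassembles as the Euler product $\prod_p c_p(\Sigma_p)$; both are routine but somewhat intricate computations. The key analytic input is the combination of the substitution $d = sr\cdot e$ with P\'olya--Vinogradov, which is what yields the sharpening of the $s^{-1/4}$ and $r^{-1/2}$ savings over the previous bounds of \cite{TT_rc,EPW}.
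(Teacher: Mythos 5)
Your proof is correct and follows essentially the same route as the paper: translate the quadratic field count to counting squarefree integers (the paper uses fundamental discriminants directly, which is an equivalent parametrization), substitute out the forced $sr$-divisibility, detect the Legendre and congruence conditions with Dirichlet characters, sieve for squarefreeness, and then balance P\'olya--Vinogradov against the trivial bound; your choice $K = X^{1/2}(sr)^{-1/2}(Us)^{-1/4}$ for the sieve cutoff agrees with the paper's $Q = X^{1/2}u^{-1/4}s^{-3/4}r^{-1/2}$ after unwinding notation, and yields the same exponents $U^{1/4}s^{-1/4}r^{-1/2}$. The only cosmetic difference is that the paper handles the $2$-adic conditions as a residue class modulo $64$ on the fundamental discriminant rather than modulo $8$ on the squarefree generator, but these encode the same information and neither affects the error analysis.
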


In Proposition~\ref{lem:quad_count}, we may explicitly evaluate $c_p(\Sigma_p)$ for specific $\Sigma_p$ as follows:
\begin{itemize}
\item
If $p\nmid M$, then $c_p(\Sigma_p)=1$ and so it may be omitted from the product. \smallskip
\item If $\Sigma_p$ consists of a single unramified and reducible algebra, then $c_p(\Sigma_p) := \frac{1}{2} (1 + p^{-1})^{-1}$.\smallskip
\item If $p\neq2$, then there are 
two ramified quadratic extensions $F$ of $\Q_p$. If $\Sigma_p$ consists of $F\times \Q_p$ for one of these $F$, then $c_p(\Sigma_p) = \frac{1}{2} (p + 1)^{-1}$. \smallskip
\item  If $\Sigma_p$ consists of all algebras $F\times\Q_p$ where $F$ is ramified,  then $c_p(\Sigma_p) := (p + 1)^{-1}$.  \smallskip
\item 
There are six ramified quadratic extensions
of $\Q_2$; if $\Sigma_2$ consists one of these six extensions, then the associated values of $c_2(\Sigma_2)$
are given in Section
$\ref{appendix:count_quadratic}$. \smallskip
\item
If $\Sigma_p$ contains more than 
one algebra, then
$c_p(\Sigma_p)$ is the sum of
the values for the individual algebras.
\end{itemize}

The following formula relates
$N_3^\pm(X, \Sigma)$ to $N_{\leq 3}^\pm(X, \Sigma)$.
\begin{proposition}\label{prop:red1}
We have
\begin{equation}
N_3^{\pm}(X, \Sigma) = 
N^{\pm}_{\leq 3}(X, \Sigma) - 
\frac{1}{2} N_2^{\pm}(X, \Sigma) + O(X^{1/2}/t^{1-\epsilon}),
\end{equation}
where 
$t$ is the product of those
primes $p$ such that $\Sigma_p$ consists of only totally ramified cubic extensions of $\Q_p$.
\end{proposition}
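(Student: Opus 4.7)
The plan is to partition the weighted count $N^{\pm}_{\leq 3}(X, \Sigma)$ according to the isomorphism type of the \'etale cubic algebra. Every \'etale algebra of degree $3$ over $\Q$ is one of: (i) $\Q^3$, which contributes $O(1)$ to any such count since $\Disc(\Q^3) = 1$; (ii) $K \times \Q$ for $K$ a quadratic field, with $|\Aut(K\times\Q)| = 2$ and $\Disc(K\times\Q) = \Disc(K)$; or (iii) a cubic field $F$, with $|\Aut(F)| = 1$ if $F$ is non-Galois and $|\Aut(F)| = 3$ if $F$ is cyclic. By the definition of ``$F$ satisfies the cubic local specifications of $\Sigma_p$'' for quadratic $F$ given just before Proposition \ref{lem:quad_count}, the algebras $K\times\Q$ in the sum defining $N_{\leq 3}^\pm(X,\Sigma)$ are in bijection with the quadratic fields counted by $N_2^\pm(X,\Sigma)$. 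Hence
\begin{equation*}
N_{\leq 3}^{\pm}(X, \Sigma) = \tfrac{1}{2} N_2^{\pm}(X, \Sigma) + N_{3,\mathrm{nc}}^{\pm}(X, \Sigma) + \tfrac{1}{3} N_{3,\mathrm{cyc}}^{\pm}(X, \Sigma) + O(1),
\end{equation*}
where $N_{3,\mathrm{nc}}^{\pm}$ and $N_{3,\mathrm{cyc}}^{\pm}$ denote the restriction of $N_3^\pm(X,\Sigma)$ to non-Galois and cyclic cubic fields, respectively.

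Since $N_3^{\pm}(X, \Sigma) = N_{3,\mathrm{nc}}^{\pm}(X, \Sigma) + N_{3,\mathrm{cyc}}^{\pm}(X, \Sigma)$, rearranging yields
\begin{equation*}
N_3^{\pm}(X, \Sigma) - \bigl[ N_{\leq 3}^{\pm}(X, \Sigma) - \tfrac{1}{2} N_2^{\pm}(X, \Sigma) \bigr] = \tfrac{2}{3} N_{3,\mathrm{cyc}}^{\pm}(X, \Sigma) + O(1),
\end{equation*}
so the proof reduces to showing $N_{3,\mathrm{cyc}}^{\pm}(X, \Sigma) \ll X^{1/2}/t^{1-\epsilon}$. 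For the minus sign this is immediate, as cyclic cubic fields are totally real and hence have positive discriminant, so $N_{3,\mathrm{cyc}}^{-}(X, \Sigma) = 0$.

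For the plus sign, I would invoke Lemma \ref{cohn}. The key observation is that, by the definition of $t$, at every prime $p \mid t$ the specification $\Sigma_p$ consists only of totally ramified cubic field extensions of $\Q_p$; any cubic field $F$ satisfying $\Sigma$ therefore has $p$ totally ramified in $F$, so $p \mid \Disc(F)$. Since $t$ is squarefree, this gives $t \mid \Disc(F)$, and Lemma \ref{cohn} yields the desired bound $N_{3,\mathrm{cyc}}^{+}(X, \Sigma) \ll X^{1/2}/t^{1-\epsilon}$.

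The argument is essentially a bookkeeping exercise; no step presents a genuine obstacle. The only subtle point is to keep the definitions of $N_2^\pm(X,\Sigma)$ and of ``a quadratic field satisfies $\Sigma$'' aligned across the three quantities $N_{\leq 3}^\pm$, $N_3^\pm$, and $N_2^\pm$ so that the reducible contributions match up exactly; once that is in place, only the cyclic cubic overcount survives, and it is handled directly by Hasse's classification via Lemma \ref{cohn}.
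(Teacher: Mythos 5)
Your proof is correct and takes essentially the same approach as the paper, which simply cites Lemma~\ref{cohn} together with the formula for $|\Aut(F)|$ across the four isomorphism types of degree-$3$ \'etale algebras and leaves the bookkeeping implicit. You have written out exactly that bookkeeping — the partition of $N^{\pm}_{\leq 3}$, the identification of the $K\times\Q$ contribution with $\tfrac12 N_2^\pm$, and the isolation of the $\tfrac23 N_{3,\mathrm{cyc}}^\pm$ discrepancy bounded via $t\mid\Disc(F)$ and Lemma~\ref{cohn} — so there is nothing to add.
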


\begin{proof}
This is immediate from Lemma~\ref{cohn} and the formula
\begin{equation}
|\Aut(F)| = 
\begin{cases}
	1	& \text{if $F$ is a non-Galois cubic field},\\
	3	& \text{if $F$ is a Galois cubic field},\\
	2	& \text{if $F = F_2 \times \Q$ with $F_2$ a quadratic field},\\
	6	& \text{if $F = \Q \times \Q \times \Q$.}\\
\end{cases}\vspace{-.1in}
\end{equation}
\end{proof}
\begin{remark}\label{rmk:quad_error_subsumed}
Note that the error term in Proposition \ref{lem:quad_count}
is always bounded above by that of our main
theorems (Theorem \ref{thm_lc} and \ref{thm_lc_averaged}).
\end{remark}

\subsection{Uniformity/tail estimates}\label{sec:tail} Our basic uniformity/tail estimate is a variation of \cite[Proposition~1, p.~410]{DH}
and \cite[Lemma 3.4]{BBP}, which bounds the number of cubic rings having absolute discriminant less than $X$ and divisible by $q^2$, where $q$ is squarefree; 
the exact statement can be found as \cite[Lemma 3.4]{BBP} (see also \cite[Lemma~3.4]{TT_rc}).

\begin{proposition}\label{prop:tail_no_eps}
For each squarefree integer $q$,
the number of cubic rings $R$ with $q^2 \mid \Disc(R)$ and $0 < \pm \Disc(R) < X$ is $\ll 6^{\omega(q)} X/q^2$.
\end{proposition}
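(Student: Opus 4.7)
The plan is to apply the Levi--Delone--Faddeev correspondence (Theorem~\ref{thm:df}) to translate the statement into a count of $\GL_2(\Z)$-orbits of nondegenerate binary cubic forms $f \in V(\Z)$ with $q^2 \mid \Disc(f)$ and $0 < \pm \Disc(f) < X$. Picking a fundamental domain $\mathcal{F} \subset V(\R)$ for the action of $\GL_2(\Z)$ on nondegenerate forms and letting $\mathcal{F}_X = \{f \in \mathcal{F} : |\Disc(f)| < X\}$, the number of orbits is bounded up to a bounded multiplicative factor (coming from stabilizer sizes) by the number of integer points in $\mathcal{F}_X$ lying in the specified union of residue classes modulo~$q^2$.

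Next, I would invoke the standard geometry-of-numbers estimate (Davenport's lemma, as packaged for this setting in \cite[Proposition~1]{DH} and \cite[Lemma~3.4]{BBP}): the number of such integer points in $\mathcal{F}_X$ equals the product of $\mathrm{vol}(\mathcal{F}_X)$ and the density of the specified union of residue classes modulo $q^2$, up to error terms coming from a boundary surface-area term and from a cusp contribution. The volume $\mathrm{vol}(\mathcal{F}_X)$ is $O(X)$, so if the density is small enough we obtain the desired main term.

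The density estimate per prime is the key input. For each prime $p$, I would count the number $N_p$ of $f \in V(\Z/p^2\Z)$ with $p^2 \mid \Disc(f)$ and show $N_p \leq 6 p^6$ uniformly in $p$. This follows either from a direct case analysis on the reduction of $f$ modulo $p$ (splitting types, combined with counting lifts to $\Z/p^2\Z$), or by reading off the associated density from equation~\eqref{eq:res_p2}, namely $\mathscr{A}(\Phi_{p^2}) = 2 p^{-2} - p^{-4}$, which bounds the density by $2/p^2$ for all but a handful of small primes (the remaining cases being handled by an explicit check). By the Chinese Remainder Theorem, for squarefree $q$ the density modulo $q^2$ is then at most $6^{\omega(q)}/q^2$, producing a main term of size $O(6^{\omega(q)} X/q^2)$.

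The main obstacle will be controlling the cusp contribution in Davenport's lemma uniformly in $q$. Forms $f$ with small leading coefficient $a$ lie on a lower-dimensional boundary component of $\mathcal{F}$ and must be handled separately from the bulk count. Following \cite{DH} and \cite{BBP}, such cusp forms are bounded by exploiting the congruence condition $p^2 \mid \Disc(f)$ at each prime $p \mid q$, which (after a suitable $\GL_2(\Z)$-reduction making the form nearly reducible) forces additional divisibility among the remaining coefficients $b, c, d$ and shrinks the count of admissible triples by a factor compensating for the reduced effective dimension; the resulting cusp bound is again absorbed into $O(6^{\omega(q)} X/q^2)$. Combining the main term with the cusp estimate, and accounting for the bounded factor relating orbit counts to lattice-point counts, then yields the claimed bound.
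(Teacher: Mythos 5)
The paper does not give an argument for this proposition at all: its entire proof is the sentence ``This follows from \cite[Lemma 3.4]{BBP} (see also \cite[Lemma 3.4]{TT\_rc}),'' deferring to those references. Your sketch is an attempt to reconstruct what those references do, and in broad outline (Levi--Delone--Faddeev correspondence, pass to lattice points in a fundamental domain, density modulo $q^2$ times volume) it is pointed in the right direction. But there are a few issues worth flagging.

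First, a circularity in the citations: you invoke \cite[Lemma~3.4]{BBP} as ``the standard geometry-of-numbers estimate'' packaging Davenport's lemma, but \cite[Lemma~3.4]{BBP} \emph{is} the statement you are trying to prove. The geometry-of-numbers input you actually want is Davenport's lemma itself (e.g.\ \cite[Proposition~1]{DH} or its precursors in Davenport's papers), not the end result.

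Second, and more substantially, the naive ``$\mathrm{vol}(\mathcal{F}_X)\cdot(\text{density}) + (\text{boundary error}) + (\text{cusp error})$'' scheme does not yield a bound that is \emph{uniform} in $q$ with no error term, which is what the proposition demands. The lattice of forms in a fixed residue class modulo $q^2$ has covolume $q^8$, and Davenport's lemma contributes lower-dimensional error terms that, after summing over the $\asymp 6^{\omega(q)}q^6$ admissible residue classes, are of size roughly $6^{\omega(q)}(X^{3/4}+\cdots+q^6)$. For $q$ in the range $X^{1/8}\lesssim q\lesssim X^{1/2}$ these error terms dominate the claimed main term $6^{\omega(q)}X/q^2$. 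So for intermediate $q$ the argument as written fails, and this is precisely where the real work in \cite{BBP} lies: one must exploit the structure of the condition $p^2\mid\Disc(f)$ (essentially, after a $\GL_2(\Z/p\Z)$-reduction, that $f$ has a double or triple root at a normalized point, forcing $p\mid a$ and $p\mid b$, and then a further linear divisibility condition) to slice the count variable by variable so that the per-prime loss is genuinely $O(1/p^2)$ with a controlled combinatorial constant, rather than relying on a global volume-plus-error bound. Acknowledging the cusp as ``the main obstacle'' and gesturing at \cite{DH,BBP} is not yet a proof.

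Third, a smaller point on the density input. The residue constant $\mathscr{A}(\Phi_{p^2})=2p^{-2}-p^{-4}$ from \eqref{eq:res_p2} does happen to equal the density of $\{x\in V(\Z/p^2\Z): p^2\mid\Disc(x)\}$, but that identification is itself a computation (it is the ``volume'' part of the residue, not the full residue, which also involves $\mathscr{B}$); invoking \eqref{eq:res_p2} does not by itself prove the bound $N_p\le 6p^6$. A short direct case analysis on $f\bmod p$ (triple root, double root plus lifting condition, content divisible by $p$) is what is really needed, and that is where a clean constant such as $6$ naturally emerges. Also note that $2p^{-2}-p^{-4}<2/p^2$ for every prime, so the caveat about ``a handful of small primes'' is not needed; the only genuine issue is justifying that $\mathscr{A}$ is the density in the first place.
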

\noindent Here (and elsewhere), $\omega(q)$ is the number of prime divisors of $q$.

For nonmaximal rings, we have the following stronger estimate.
The proof is identical to \cite[Lemma 2.7]{BBP},
using Lemma \ref{lemma_overrings} to include the reducible rings.
\begin{proposition}\label{prop:tail_nonmax}
For each squarefree integer $q$,
the number of cubic rings $R$ that are nonmaximal at each prime divisor of $q$ and satisfy $0 < \pm \Disc(R) < X$ is $\ll 3^{\omega(q)} X/q^2$. 
\end{proposition}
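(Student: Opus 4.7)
The plan is to reduce the problem to counting cubic rings whose content is coprime to $q$, and then to count those via overrings using Lemma~\ref{lemma_overrings}.

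First, I would partition the rings $R$ in question according to $d := \gcd(\ct(R), q)$, a squarefree divisor of $q$, and decompose $R = \Z + d R^*$ for a uniquely determined cubic ring $R^*$. Since $\ct(R^*) = \ct(R)/d$ (the primitive binary cubic form of $R$ factors as $d$ times that of $R^*$), the condition $\gcd(\ct(R),q) = d$ translates to $\gcd(\ct(R^*), q/d) = 1$, and one has $|\Disc(R^*)| = |\Disc(R)|/d^4 < X/d^4$. Because $d$ is a unit at each prime $p \mid q/d$, the localizations of $R$ and $R^*$ agree at such $p$, so $R$ being nonmaximal at every $p \mid q$ is equivalent to $R^*$ being nonmaximal at every $p \mid q/d$ (together with the automatic nonmaximality of $R$ at the primes dividing $d$). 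Thus counting reduces, for each $d \mid q$, to counting cubic rings $R^*$ with content coprime to $q' := q/d$, absolute discriminant below $X/d^4$, and nonmaximal at each prime dividing $q'$.

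Next, for such $R^*$, Lemma~\ref{lemma_overrings}(i) provides an overring $R' \supset R^*$ with $[R' : R^*] = q'$. Then $|\Disc(R')| < X/(d^2 q^2)$, and $\ct(R')$ divides $\ct(R^*)$, hence is coprime to $q'$. The classical Davenport lattice-point bound for $\GL_2(\Z)$-orbits on $V(\Z)$ yields $O(X/(d^2 q^2))$ possibilities for $R'$, and Lemma~\ref{lemma_overrings}(ii) (applied with $\ct(R')$ coprime to $q'$) bounds the number of index-$q'$ subrings of each $R'$ by $3^{\omega(q')}$. Consequently the count of $R$ with $\gcd(\ct(R),q)=d$ is $O(3^{\omega(q/d)}X/(d^2 q^2))$.

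Summing over squarefree divisors $d$ of $q$ gives
\begin{equation*}
\sum_{d \mid q} \frac{3^{\omega(q/d)}X}{d^2 q^2} = \frac{X}{q^2}\prod_{p \mid q}\Bigl(3 + \frac{1}{p^2}\Bigr) = \frac{3^{\omega(q)} X}{q^2}\prod_{p \mid q}\Bigl(1 + \frac{1}{3p^2}\Bigr) = O\!\left(\frac{3^{\omega(q)} X}{q^2}\right),
\end{equation*}
since the Euler product $\prod_p (1 + 1/(3p^2))$ converges. The main technical point I anticipate is setting up the content decomposition cleanly and verifying that nonmaximality transfers correctly between $R$ and $R^*$; once that is in place, the rest is a routine application of Lemma~\ref{lemma_overrings} together with the classical count of cubic rings of bounded discriminant.
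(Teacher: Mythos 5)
Your overall framework---peel off content to pass to rings with content coprime to the modulus, pass to overrings via Lemma~\ref{lemma_overrings}, and count those overrings---is the right one and is essentially the same route the paper follows (via [BBP, Lemmas 2.7 and 3.3] and the content bookkeeping displayed in \eqref{eq:factor2} of Section~\ref{sec:direct_proof}). However, there is a genuine gap in the step where you assert that ``$\ct(R')$ divides $\ct(R^*)$, hence is coprime to $q'$,'' and this invalidates your application of Lemma~\ref{lemma_overrings}(ii) as written.

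The claim is false: an overring of $R^*$ of index $q'$ can acquire content at primes dividing $q'$ even when $\ct(R^*)$ is coprime to $q'$. A concrete counterexample, already with $d=1$ and $q=q'=p$: let $R^*$ be the cubic ring whose binary cubic form is $u^3 + p^3 v^3$. It has content $1$, is nonmaximal at $p$ (its form has a triple root mod $p$, and indeed $(v,u)\mapsto(u,v)$ puts $p^2\mid a$, $p\mid b$), and one checks by running over the matrices of determinant $p$ that its \emph{unique} overring $R'$ of index $p$ is the ring of $p(u^3+v^3)$, which has content $p$. So $\ct(R')=p$ neither divides $\ct(R^*)=1$ nor is coprime to $q'=p$, and Lemma~\ref{lemma_overrings}(ii) gives the bound $p+1$ (not $3$) for this $R'$. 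Your proof silently assumes this case away and therefore does not establish the proposition.

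The fix is exactly the further bookkeeping the paper performs in Section~\ref{sec:direct_proof} when it splits $d_4=d_{4c}d_{4n}$ in \eqref{eq:factor2}: after fixing $d$, partition the overrings $R'$ by $e:=\gcd(\ct(R'),q')$. For each divisor $e\mid q'$, an $R'$ in that class satisfies $R'=\Z+eR''$ for some cubic ring $R''$ with $|\Disc(R'')|<X/(d^2q^2e^4)$, so Davenport's bound gives $O(X/(d^2q^2e^4))$ choices, while Lemma~\ref{lemma_overrings}(ii) bounds the number of index-$q'$ subrings of $R'$ by $3^{\omega(q'/e)}\prod_{p\mid e}(p+1)$. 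Summing over $d\mid q$ and $e\mid q/d$ yields
\begin{equation*}
\sum_{d\mid q}\sum_{e\mid q/d}\frac{X}{d^2q^2e^4}\cdot 3^{\omega(q/(de))}\prod_{p\mid e}(p+1)
=\frac{X}{q^2}\prod_{p\mid q}\Bigl(\frac{1}{p^2}+\frac{p+1}{p^4}+3\Bigr)\ll\frac{3^{\omega(q)}X}{q^2},
\end{equation*}
since $\prod_p\bigl(1+\tfrac{1}{3p^2}+\tfrac{p+1}{3p^4}\bigr)$ converges. With this extra layer the rest of your argument (the content decomposition $R=\Z+dR^*$, the transfer of nonmaximality at primes $p\mid q/d$, the discriminant bookkeeping) is correct and gives the stated bound.
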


Finally, we require the following uniformity estimate on the number of cubic rings having discriminant divisible by a given cubefree integer $q$:
\begin{proposition}\label{prop:uniformity}
For each cubefree integer $q < X^{1/4-\epsilon'}$,
the number of cubic rings $R$ with $0 < \pm \Disc(R) < X$ for which $q \mid \Disc(R)$ 
is $\ll X/q^{1-\epsilon}$
where $\epsilon,\epsilon'>0$ are arbitrary.
\end{proposition}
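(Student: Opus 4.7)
\emph{Proof proposal.}
The plan is to apply the analytic Landau method (Theorem~\ref{thm:landau}) to a Shintani zeta function that encodes the congruence condition $q \mid \Disc$. Let $\Phi_q \colon V(\Z/q\Z) \to \{0,1\}$ be the characteristic function of $\{y : q \mid \Disc(y)\}$. Writing the cubefree integer $q$ as $\prod_{p} p^{e_p}$ with $e_p \in \{1,2\}$, the Chinese remainder theorem gives a factorization $\Phi_q = \otimes_{p} \Phi_{p^{e_p}}$ in which each local factor is the characteristic function appearing in parts~(ii) and~(iii) of Theorem~\ref{thm:fe}. By \eqref{eqn:shintani_df} and the Levi--Delone--Faddeev correspondence, the weighted sum
\[
N^\pm(X,\Phi_q) = \sum_{\substack{0 < \pm\Disc(R) < X \\ q \mid \Disc(R)}} \frac{1}{|\Aut(R)|}
\]
differs from the unweighted count of rings by at most a factor of $6$, so it suffices to prove $N^\pm(X,\Phi_q) \ll X/q$.

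Next I would verify the hypotheses of Theorem~\ref{thm:landau}. The multiplicativity asserted in Theorem~\ref{thm:fe} together with the explicit residue data in \eqref{eq:res_p2} and \eqref{eq:res_p} give $\mathscr{A}(\Phi_{p^{e_p}}), \mathscr{B}(\Phi_{p^{e_p}}), \mathscr{C}(\Phi_{p^{e_p}}) \ll p^{-e_p}$, so $\delta_1(\Phi_q) \ll_\epsilon q^{\epsilon - 1}$, and the residue at $s = 5/6$ satisfies the same bound; in particular \eqref{eq:cond_lower_error} is trivial. For the Fourier-dual quantity, the trivial bound $|\widehat{\Phi_q}|_\infty \leq q^{-4}\#\{y \in V(\Z/q\Z) : q \mid \Disc(y)\}$ combined with standard estimates for the zero set of the discriminant polynomial modulo $p$ and $p^2$ gives $|\widehat{\Phi_q}|_\infty \ll_\epsilon q^{\epsilon - 1}$. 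Since the total number of $\GL_2(\Z)$-orbits of discriminant up to $N$ is $\ll N$, this yields $\widehat{\delta_1}(\Phi_q) \ll q^{3+\epsilon}$. Hypothesis~\eqref{eq:error_smaller} then reduces to $q^4 \ll X^{1-\epsilon'}$, which holds throughout the range $q < X^{1/4-\epsilon}$.

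Applying Theorem~\ref{thm:landau} now produces
\[
N^\pm(X,\Phi_q) \ll_\epsilon \frac{X}{q^{1-\epsilon}} + X^{5/6} \cdot q^{\epsilon-1} + X^{3/5} (q^{\epsilon-1})^{3/5}(q^{3+\epsilon})^{2/5} \ll_\epsilon \frac{X}{q^{1-\epsilon}} + X^{3/5} q^{3/5+\epsilon},
\]
and in the range $q < X^{1/4-\epsilon}$ the Landau error term is dominated by the main term, giving the claimed bound after absorbing the harmless $q^{\epsilon}$ into the implicit constant. The only real obstacle is the balance of the $X^{3/5} q^{3/5}$ error against the target $X/q$, which is exactly what forces the restriction $q < X^{1/4-\epsilon}$; sharper Fourier-analytic or exponential-sum bounds on $\widehat{\Phi_q}$ could in principle extend the range, but are not needed for this uniformity estimate.
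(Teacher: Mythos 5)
Your proposal is correct and gives a valid proof, but it takes a genuinely different route from the paper. The paper's own proof deliberately avoids Theorem~\ref{thm:landau}: it uses the exponentially-smoothed Mellin transform $\sum_n a^\pm(\Phi_q,n)\exp(-n/X)=\frac{1}{2\pi i}\int\xi^\pm(\Phi_q,s)X^s\Gamma(s)\,ds$, shifts the contour past the critical strip, and estimates the shifted integral with only the \emph{trivial} Fourier bound $|\widehat{\Phi_q}(x)|\leq 1$, letting the $m^{4s}$ factor in the functional equation and $\Gamma(s)$ do the rest. As the paper's Remark explains, this choice is deliberate: Proposition~\ref{prop:uniformity} is invoked inside Section~5 when bounding a dual sum that feeds back into the Landau machinery, and the authors wanted a proof visibly independent of that machinery. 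Your proof instead goes through Theorem~\ref{thm:landau} directly, verifying its hypotheses from scratch via the residue formulas~\eqref{eq:res_p},~\eqref{eq:res_p2}, the $L^\infty$-via-$L^1$ bound $|\widehat{\Phi_q}|_\infty\ll q^{\epsilon-1}$ from the density of the discriminant zero locus, and the classical $O(N)$ count of orbits with $|\Disc|<N$. This is \emph{not} actually circular, since those inputs are all independent of Proposition~\ref{prop:uniformity} and of Section~5; but it is worth stating explicitly that you use Theorem~\ref{thm:landau} only with trivial inputs, precisely because the paper flags this as the danger point. The paper's approach buys maximal transparency about independence; yours buys brevity by reusing the general theorem, and your finer Fourier bound could in principle extend the admissible range of $q$. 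Two small points of hygiene: the factor you call ``the harmless $q^\epsilon$'' is not literally absorbable into the implied constant when $q$ is allowed to grow to $X^{1/4-\epsilon}$ (this mild overstatement in fact appears in the paper's own $O(X/q)$ claim as well, since the $s=1$ residue carries a $\prod_{p\mid q}(1+p^{-1})\ll\log\log q$ factor); and your intermediate claim $\mathscr{C}(\Phi_q)\ll q^{\epsilon-1}$ is too strong for $q$ with many square factors (for $q=p^2$ it is $\asymp q^{-5/6}$), though the resulting secondary term $X^{5/6}q^{-5/6+\epsilon}\ll X^{1-\epsilon}/q$ is still dominated by the target. Neither affects the range $q<X^{1/4-\epsilon}$ or the application in the paper, where only $q<X^\delta$ for tiny $\delta$ is actually needed.
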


\begin{proof}
Let $\Phi_q : V(\Z/q\Z) \rightarrow \{ 0, 1 \}$ be the characteristic function of those forms whose discriminants are divisible
by $q$. Then, we have
\begin{equation}
\sum_{n < X} a^{\pm}(\Phi_q, n) \leq e \cdot 
\sum_{n} a^{\pm}(\Phi_q, n) \exp(-n/X)
= 
\frac{e}{2 \pi i} \int_{2 - i \infty}^{2 + i \infty} \xi^\pm(s,\Phi_q) 
X^s \Gamma(s) ds.
\end{equation}
Shift the contour to $\Re(s) = - c$ for some small $c > 0$, obtaining a `main term'
of $O(X/q^{1-\epsilon})$ from~the pole of $\xi^{\pm}(s,\Phi_q)$ at $s = 1$.
We have the trivial bound
\smash{$|\widehat\Phi_q(x)|\ll q^{-1+\epsilon''}$},
since the support of $\Phi_q$ in $V(\Z/q\Z)$ has cardinality
$\ll q^{3+\epsilon''}$.
Thus the functional
equation establishes a bound of
$O(q^{3 + 4c+\epsilon''})$ on the line $\Re(s) = -c$
of the integrand,
and the factor of $\Gamma(s)$ guarantees
the absolute convergence of the integral. The contribution of the other residues may be either estimated directly,
or bounded using the Phragmen-Lindel\"of principle, and this completes the proof. 
\end{proof}

\begin{remark}
A similar result also appeared as \cite[Lemma 4.4]{TT_rc}. We will actually only need this result for $q < X^{\delta}$ for {\itshape any} arbitrarily small $\delta > 0$.
This result is used in the penultimate paragraph of Section~5 (and analogously in Section~6) where, after applying the functional equation to the relevant Shintani zeta functions, we must bound a `dual sum' on $V^*(\Z)$
that shares some characteristics with our original counting problem. A result of this form could also be
immediately deduced from the main results of this paper or from~\cite{TT_rc}. The above proof is independent of our other results and thus allows us to avoid any circular reasoning.
\end{remark}

\section{Direct proof}\label{sec:direct_proof}
We first prove Theorems \ref{thm_rc} and \ref{thm_rc_torsion} directly using Landau's method as presented in Theorems
\ref{thm:landau} and \ref{thm:landau_average}.
We obtain error	terms of $O(X^{2/3+\epsilon})$ in both
	results. (For the improvement to 
	$O(X^{2/3} (\log X)^{2.09})$ in Theorem
	\ref{thm_rc}, see the alternative
	proof using discriminant reduction in Section
	\ref{sec:disc_reduce_1}.)

We fix a collection of local specifications $\Sigma=(\Sigma_p)_p$,
ordinary away from its conductor~$M$, such that
the conditions (mod $M$) correspond to 
a $\GL_2(\Z/M\Z)$-invariant
function $\Phi_M : V(\Z / M \Z) \rightarrow \C$.
(For Theorems \ref{thm_rc}
and \ref{thm_rc_torsion} we take
$M = 1$; the same setup will also be used
in Section~\ref{sec:lc},
when proving the more
general
Theorem \ref{thm_lc_averaged}.)
As before, we may assume that $\Phi_M$ is
supported only on those $x \in V(\Z / M \Z)$ satisfying the Davenport--Heilbronn maximality condition of Proposition 
\ref{def_up} for each prime $p \mid M$.

To carry out the sieve, for each prime $p \nmid M$ 
we write $\Psi_{p^2} : V(\Z/p^2\Z) \rightarrow \{ 0, 1 \}$
for the characteristic function of
one of the following two sets, in accordance with whether $\Sigma_p = A_p$ or $\Sigma_p = A'_p$:
\begin{itemize}
\item
Those $x \in V(\Z/p^2\Z)$ that are nonmaximal at $p$ 
in the sense of Proposition \ref{def_up}. \smallskip
\item
Those $x \in V(\Z/p^2\Z)$ that are nonmaximal at $p$ or have a triple root $(\textmod \ p)$.
When $p > 2$, this is equivalent to requiring that $p^2 \mid \Disc(x)$.
\end{itemize}
For each squarefree $q$ coprime to $M$,
we view $\Psi_{q^2} \!:=\!
\otimes_{p \mid q} \Psi_{p^2}$ as a function
$V(\Z/q^2\Z) \rightarrow \{ 0, 1 \}$.

The quantity $N^{\pm}_3(X, \Sigma)$ of 
interest is related to $N^{\pm}_{\leq 3}(X, \Sigma)$ as discussed in Proposition~\ref{prop:red1},
so that it suffices to estimate the latter.
The Levi--Delone--Faddeev correspondence
(Theorem~\ref{thm:df}), the Davenport--Heilbronn
correspondence (Proposition \ref{def_up}),
and inclusion-exclusion give
\begin{equation}
N^{\pm}_{\leq 3}(X, \Sigma)  = 
\sum_q \mu(q) N^{\pm}(X, \Phi_M \Psi_{q^2}).
\end{equation}
We now split the sum into two parts
in accordance with whether $q\leq Q$
or $q>Q$, and apply
Landau's method (Theorem \ref{thm:landau_average})
for the former.
We obtain
\begin{equation}\label{eq:main_term}
N^{\pm}_{\leq 3}(X, \Sigma)
= \sum_{\sigma \in \{ 1, \frac{5}{6} \}} \frac{X^\sigma}{\sigma}
\sum_{\substack{q =1 \\ (q, M) = 1}}^{\infty} \mu(q) \cdot \Res_{s = \sigma} \xi^{\pm}(s, \Phi_M \Psi_{q^2})
+ O\left( E_1 + E_2 + E_3 \right),
\end{equation}
with
\begin{align*}
E_1 := & \ \sum_{\sigma \in \{ 1, \frac56 \}} X^{\sigma}  \sum_{\substack{q > Q}} \left|\Res_{s = \sigma} \xi^{\pm}(s, \Phi_M \Psi_{q^2})\right|,
\\[-.065in]
E_2 := & 
X^{\frac35}
\sum_{Q_1} 
\Biggl( \sum_{q \in [Q_1, 2Q_1]} \delta_1(\Phi_M \Psi_{q^2}) \Biggr)^{\frac35} 
\Biggl( \sum_{q \in [Q_1, 2Q_1]} \widehat{\delta_1}(\Phi_M \Psi_{q^2}) \Biggr)^{\frac25},
\\[.05in]
E_3 := &
\sum_{q > Q} N^{\pm}(X, \Phi_M \Psi_{q^2}),
\end{align*}
provided that \eqref{eq:error_smaller_2} is satisfied for our choice of the parameter $Q$.
Here $Q_1$ ranges over all integer powers of $2$ less than $Q$.
Throughout, all summations over $q$ are over {\itshape squarefree} integers
coprime to $M$.

In this section, we specialize to the 
case $M = 1$, i.e., when $\Sigma_p = A_p$ or $\Sigma_p =  A'_p$ for every prime $p$.
(Making the same choice for all primes $p$ leads to Theorem \ref{thm_rc} or Theorem \ref{thm_rc_torsion} respectively.) In this case, we have
$\Phi_M \Psi_{q^2} = \Psi_{q^2}$, and
our residue formulas \eqref{eq:res_nmax} and \eqref{eq:res_p2} then imply that 
\[
E_1 \ll X \sum_{q > Q} q^{-2 + \epsilon} + X^{5/6}  \sum_{q > Q} q^{-5/3 + \epsilon}
\ll \frac{X}{Q^{1 - \epsilon}}
\]
for $Q < X^{1/2}$, 
and that \eqref{eq:cond_lower_error} is satisfied in the same range.
By the tail estimate of Proposition~\ref{prop:tail_no_eps}, we also have
\[
E_3 \ll X \sum_{q > Q} 6^{\omega(q)} q^{-2} \ll \frac{X}{Q^{1 - \epsilon}}.
\]

To bound $E_2$, note that 
$\delta_1( \Psi_{q^2}) \ll q^{-2 + \epsilon}$ for each $q$ by the residue formulas already quoted, 
and we claim the following average bound on $\widehat{\delta_1}(\Psi_{q^2})$:
\begin{proposition}\label{prop:bound_delta_2}
For either definition of $\Psi_{q^2}$, we have
\begin{equation}\label{eqn:bound_delta_2}
\sum_{q \in [Q, 2Q]} \widehat{\delta_1}(\Psi_{q^2}) \ll Q^{2 + \epsilon}.
\end{equation}
\end{proposition}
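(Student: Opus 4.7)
The plan is to interchange the sum over $q$ with the inner sum defining $\widehat{\delta_1}(\Psi_{q^2})$, and to exploit both the multiplicative structure of the Fourier transform and the sparsity of its support. Unfolding the definition,
\[
\widehat{\delta_1}(\Psi_{q^2}) = q^8 \sup_N \frac{1}{N} \sum_{\alpha = \pm} \sum_{\substack{x \in V^*(\Z)/\GL_2(\Z) \\ 0 < \alpha \Disc(x) < N}} \frac{|\widehat{\Psi_{q^2}}(x)|}{|\Stab(x)|},
\]
and squarefreeness of $q$ gives $\widehat{\Psi_{q^2}}(x) = \prod_{p \mid q} \widehat{\Psi_{p^2}}(x)$, so each orbit's weight factors multiplicatively across the primes dividing $q$.

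The key technical input is the $L^1$ estimate
\[
\sum_{y \in V^*(\Z/p^2\Z)} |\widehat{\Psi_{p^2}}(y)| \ll p^{1+\epsilon}.
\]
This bound can be extracted from the explicit computations of $\widehat{\Psi_{p^2}}$ in \cite{TT_L}, where $\widehat{\Psi_{p^2}}$ is written as a linear combination of characteristic functions of $\GL_2(\Z/p^2\Z)$-orbits on $V^*(\Z/p^2\Z)$ with explicit coefficients and orbit sizes. By contrast, a direct application of Plancherel and Cauchy--Schwarz yields only $\|\widehat{\Psi_{p^2}}\|_1 \ll p^{3+\epsilon}$, short by exactly a factor of $p^2$. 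By multiplicativity, $\|\widehat{\Psi_{q^2}}\|_1 \ll q^{1+\epsilon}$ for squarefree $q$.

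With this $L^1$ bound in hand, equidistribution of $\GL_2(\Z)$-orbits on $V^*(\Z)$ across residue classes modulo $q^2$, argued along the lines of Proposition~\ref{prop:uniformity}, gives
\[
\sum_{\substack{x \text{ orbit} \\ |\Disc(x)| < N}} \frac{|\widehat{\Psi_{q^2}}(x)|}{|\Stab(x)|} \ll \frac{N}{q^8} \cdot \|\widehat{\Psi_{q^2}}\|_1 \ll N q^{-7+\epsilon}
\]
once $N$ is large enough that equidistribution kicks in; for smaller $N$, the support structure of $\widehat{\Psi_{q^2}}$ forces orbits to have $|\Disc(x)|$ divisible by all the primes in $q$, which (after invoking Proposition~\ref{prop:uniformity}) prevents spikes in the supremum. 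Hence $\widehat{\delta_1}(\Psi_{q^2}) \ll q^{1+\epsilon}$, and summing over $q \in [Q, 2Q]$ yields $\sum_q \widehat{\delta_1}(\Psi_{q^2}) \ll Q^{2+\epsilon}$. The main obstacle is the sharp $L^1$ bound on $\widehat{\Psi_{p^2}}$: the direct Plancherel--Cauchy--Schwarz approach loses a factor of $p^2$ per prime, and closing this gap requires the orbit-level computations from \cite{TT_L}, where the $\GL_2$-equivariance of $\widehat{\Psi_{p^2}}$ concentrates its support.
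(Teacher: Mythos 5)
Your overall framework — unfolding $\widehat{\delta_1}$, exploiting the multiplicativity of $\widehat{\Psi_{q^2}}$, and trading the size of the Fourier transform against the divisibility of the discriminant on its support — is the right shape, and it is what the paper does. But there is a genuine gap at the ``equidistribution / prevents spikes'' step, and it is exactly the point the paper flags with the sentence ``Our analysis will bound the sum over $q\in [Q,2Q]$ as a whole, and avoid the need for sharp bounds on the inner sum for each $q$ individually.'' You are trying to prove the pointwise bound $\widehat{\delta_1}(\Psi_{q^2})\ll q^{1+\epsilon}$ and then sum trivially; the paper deliberately does not do this, because that pointwise bound is not accessible by the tools you invoke.

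Here is where it breaks. The dangerous part of the support of $\widehat{\Psi_{q^2}}$ is the set of $x$ for which the associated ring $R$ is nonmaximal at each $p\mid q$ with $p^3\Vert\Disc(R)$ (the ``$d_3$'' piece in the paper's notation): there $|\widehat{\Psi_{q^2}}(x)|\asymp q^{-4}$, and after passing to an overring $R'$ of index $q$ via Lemma~\ref{lemma_overrings}, the discriminant satisfies only $q\Vert\Disc(R')$ --- a \emph{first-power} divisibility. To get $\widehat{\delta_1}(\Psi_{q^2})\ll q^{1+\epsilon}$ you must show
\[
\#\bigl\{R' : q\mid\Disc(R'),\ 0<|\Disc(R')|<N'\bigr\}\ \ll\ N' q^{-1+\epsilon}
\]
for \emph{every} $N'$ entering the supremum, including $N'\asymp N/q^2$ with $N$ as small as $q^{3}$, hence $N'$ as small as $q$. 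Proposition~\ref{prop:tail_no_eps} gives nothing for first-power divisibility, and Proposition~\ref{prop:uniformity} requires $q< (N')^{1/4-\epsilon}$, i.e.\ $N\gtrsim q^{6}$. In the range $N\in[q^3,\,q^{6}]$ neither tool applies; the trivial bound gives only $\ll N/q^2$ for the count, hence $\widehat{\delta_1}(\Psi_{q^2})\ll q^{2+\epsilon}$, and summing over $q\in[Q,2Q]$ yields $Q^{3+\epsilon}$ rather than the claimed $Q^{2+\epsilon}$. The phrase ``prevents spikes in the supremum'' is exactly the missing content.

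The paper closes this gap only by averaging over $q$: it fixes the form $x'$, sums the factor $d_3$ over a dyadic interval $[D_3,2D_3]$, and uses the divisor bound to show there are at most $O(Q^\epsilon)$ admissible $d_3$ dividing $\Disc(x')$; Proposition~\ref{prop:tail_no_eps} is then applied only to the square-divisibility conditions $d_{4n}^2 d_2^2\mid\Disc(x')$. That maneuver is unavailable for a single $q$, so it is not a cosmetic difference — it is the mechanism that avoids the unproved uniform count above, which would otherwise require control on the number of cubic rings of a prescribed discriminant $\asymp q^k$ beyond what is known. Your $L^1$ estimate $\|\widehat{\Psi_{p^2}}\|_1\ll p^{1+\epsilon}$ is fine as far as it goes, but the supremum over $N$ in the definition of $\widehat{\delta_1}$ defeats a pure average/equidistribution argument precisely in the short ranges where the support of $\widehat{\Psi_{q^2}}$ is not yet equidistributed.
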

\noindent Granting this for now, we have
\[
E_1 + E_2 + E_3 \ll \frac{X}{Q^{1 - \epsilon}} + X^{3/5} 
\sum_{Q_1} Q_1^{- \frac35 + \epsilon} \cdot Q_1^{\frac45 + \epsilon}
\ll
\frac{X}{Q^{1 - \epsilon}} + X^{3/5} Q^{1/5 + \epsilon}.  
\]
Choosing \smash{$Q = X^{1/3 - \epsilon}$}, which is acceptable in \eqref{eq:error_smaller_2}, we obtain error terms of $O(X^{2/3 + \epsilon})$ in Theorem
\ref{thm_rc} and in
\eqref{eqn_torsion2}, which is equivalent to Theorem \ref{thm_rc_torsion}. The main terms in counting $N_{\leq 3}^\pm(X)$
are given by the infinite sums in \eqref{eq:main_term}, namely
\begin{equation}\label{eq:sums}
C^\pm \cdot \frac{\pi^2}{72} \cdot X \prod_p (1 - \mathscr{A}(\Psi_{p^2})) +
\frac{\pi^2}{24} \cdot X \prod_p (1 - \mathscr{B}(\Psi_{p^2})) +
K^\pm \cdot \frac{\pi^2 \zeta(1/3)}{9 \Gamma(2/3)^3} \cdot \frac{6}{5}
X^{5/6} \cdot \prod_p  (1 - \mathscr{C}(\Psi_{p^2})).
\end{equation}
The products are evaluated using the formulas in \eqref{eq:res_nmax} and \eqref{eq:res_p2} respectively, and then
an application of Propositions  \ref{lem:quad_count} and \ref{prop:red1} finishes the proof.

It remains only to prove Proposition \ref{prop:bound_delta_2}. 
Expanding the definition \eqref{eqn:def_delta1} of $\widehat{\delta_1}$, the bound
to be proved is
\begin{equation}\label{eqn:bound_delta_2a}
\sum_{\alpha\in\{\pm\}}\sum_{q \in [Q, 2Q]}  
\sum_{n<N} a^{\alpha}(|\widehat{\Psi_{q^2}}|, n)
\ll N Q^{-6 + \epsilon}.
\end{equation}
By the definition of the coefficients, this is equivalent to
showing that
\begin{equation}\label{eqn:bound_delta_2b}
\sum_{q \in [Q, 2Q]}
\sum_{\substack{x \in \GL_2(\Z) \backslash V(\Z) \\0\neq|\Disc(x)|<N}}
|\widehat{\Psi_{q^2}}(x)|
\ll N Q^{-6 + \epsilon}.
\end{equation}
Our analysis will bound the sum over $q\in [Q,2Q]$ as a whole, and avoid the need for sharp bounds on the inner sum for each $q$ individually. 

The first step is an evaluation of the Fourier transforms \smash{$|\widehat{\Psi_{q^2}}(x)|$}; the following
combination of \cite[Lemmas 3.3 and 4.3]{TT_rc} summarizes\footnote{We point out a minor mistake
in Lemmas 3.3 and 4.3 of \cite{TT_rc}: exact formulas are claimed for the values of $\Psi_{q^2}$ for those $x$
which are nonmaximal at $p$, have content coprime to $p$, and for which $p^4 \mid \Disc(x)$. These formulas
hold as equalities only for some such $x$, but as upper bounds they hold for all $x$.} the results we need:

\begin{proposition}\label{lem:ft_eval} For either definition of $\Psi_{q^2}$, the function $\widehat{\Psi_{q^2}}(x)$ is multiplicative
in $q$, and satisfies the following bounds (where $R$ is the cubic ring corresponding to $x$):
\begin{itemize}
\item $($Content $p^2)$\, $|\widehat{\Psi_{p^2}}(x)| = O(p^{-2})$ if $p^2$ divides the content of $R$.
\item $($Content $p)$\, $|\widehat{\Psi_{p^2}}(x)| = O(p^{-3})$ if $p$ divides the content of $R$ but $p^2$
does not.
\item $($Divisible by $p^4)$\, $|\widehat{\Psi_{p^2}}(x)| = O(p^{-3})$ if $R$ is nonmaximal at $p$,
$p$ does not divide the content of $R$, and $p^4 \mid \Disc(R)$.
\item $($Divisible by $p^3)$\, $|\widehat{\Psi_{p^2}}(x)| = O(p^{-4})$ if $R$ is nonmaximal at $p$
and $p^3 \mid \mid \Disc(R)$.
\item $($Divisible by $p^2)$\, $|\widehat{\Psi_{p^2}}(x)| = O(p^{-5})$ if $p^2 \mid \mid \Disc(R)$.
\item Otherwise, and in particular if $p^2 \nmid \Disc(R)$, we have $\widehat{\Psi_{p^2}}(x) = 0$.
\end{itemize}
\end{proposition}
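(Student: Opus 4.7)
The plan is to reduce the computation to a single prime via a Chinese Remainder Theorem argument, and then perform explicit character sum evaluations at each prime. Under the CRT isomorphism $V(\Z/q^2\Z) \cong \prod_{p \mid q} V(\Z/p^2\Z)$, the assumption $\Psi_{q^2} = \otimes_{p\mid q}\Psi_{p^2}$ forces the Fourier transform to factor as $\widehat{\Psi_{q^2}} = \prod_{p \mid q} \widehat{\Psi_{p^2}}$, establishing multiplicativity in $q$ and reducing the bounds to a prime-by-prime analysis.

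For a fixed prime $p$, I would evaluate
\[
\widehat{\Psi_{p^2}}(x) = \frac{1}{p^8} \sum_{y \in V(\Z/p^2\Z)} \Psi_{p^2}(y)\,\exp\!\left(2\pi i\,\frac{[x,y]}{p^2}\right)
\]
by stratifying $y$ according to $\GL_2(\Z/p^2\Z)$-orbits in the support of $\Psi_{p^2}$. By the Davenport--Heilbronn description (Proposition \ref{def_up}), every such $y$ is equivalent to one with $p^2 \mid a$ and $p \mid b$ (or to the additional triple-root class, in the second variant of $\Psi_{p^2}$), so the support admits an efficient parametrization. For each orbit I would identify a reduced representative, compute the stabilizer in $\GL_2(\Z/p^2\Z)$, and use the $\GL_2$-invariance of $\Psi_{p^2}$ (inherited by $\widehat{\Psi_{p^2}}$) to reduce $x$ to an orbit representative. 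The pairing $[x,y]$ then becomes a linear form whose exponential sum can be evaluated in closed form, and the size of the resulting bound is controlled by the $p$-adic invariants of the cubic ring $R_x$, namely the content of $R_x$ and $v_p(\Disc(R_x))$, together with some splitting-type data when $v_p(\Disc(R_x))$ is small.

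The cleanest step is the vanishing statement: when $p^2 \nmid \Disc(R_x)$ and the content conditions fail, the restriction of the character $y \mapsto \exp(2\pi i[x,y]/p^2)$ to the sub-lattice of forms with $p^2\mid a$ and $p\mid b$ is nontrivial, so the orbit sums collapse. The case (content $p^2$) is equally easy, as it reduces to computing the total mass of $\Psi_{p^2}$, which is $O(p^{-2})$ by \eqref{eq:res_nmax}. The main obstacle will be the intermediate-valuation cases ($p^3 \mid\mid \Disc$ and $p^4 \mid \Disc$, with $p$ not dividing the content), where the stabilizer of $y$ depends on the ramification pattern of $R_x$ modulo $p$, and one must track the cancellation carefully to secure the claimed powers $p^{-4}$ and $p^{-3}$. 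This delicate combinatorics is exactly what is carried out in \cite[Lemmas 3.3 and 4.3]{TT_rc}, whose computations we would follow while keeping only the upper-bound form (so as to avoid the inaccuracy noted in the footnote, and because only bounds are required in our subsequent applications).

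Assembling the prime-by-prime estimates according to which case $R_x$ falls into at each $p \mid q$, and combining them multiplicatively, yields the stated bounds on $|\widehat{\Psi_{q^2}}(x)|$ for all $x \in V(\Z)$.
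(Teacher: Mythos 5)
Your proposal takes essentially the same route as the paper, which establishes Proposition~\ref{lem:ft_eval} simply by citing \cite[Lemmas 3.3 and 4.3]{TT_rc} (reading the exact formulas there as upper bounds, per the footnote). Your CRT argument for multiplicativity, the orbit-stratified character-sum sketch, and the observation that only the upper-bound form is needed all match the paper's reasoning, and you correctly identify that the substance of the intermediate-valuation cases is precisely what those cited lemmas compute.
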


\noindent
For each squarefree $q \in [Q, 2Q]$, we consider the contribution to \eqref{eqn:bound_delta_2b} from 
every factorization
\begin{equation}\label{eq:factor1}
q = c_2 c_1 d_4 d_3 d_2
\end{equation}
and from those $x$ such that $x \!\pmod {p^2}$ satisfies the first five conditions of Proposition \ref{lem:ft_eval} 
for $p$ dividing $c_2$, $c_1$, $d_4$, $d_3$, and $d_2$, respectively. 
We begin by replacing each form $x$ with $x/c_2^2 c_1$, using the natural bijection 
between forms $x$ with $|\Disc(x)| < N$ and content divisible by $c_2^2 c_1$, 
and forms $x$ with $|\Disc(x)| < Nc_2^{-8} c_1^{-4}$.

By Lemma \ref{lemma_overrings} (i),
each ring $R$ with $d_4 d_3>1$ is contained
in an overring $R'$ of index $d_4 d_3$. The content of $R'$ may be divisible by prime factors
of $d_4$; we write $d_4 = d_{4c} d_{4n}$, where $d_{4c}$ is the gcd of $d_4$ and the content of $R'$, and refine the factorization
of \eqref{eq:factor1} to 
\begin{equation}\label{eq:factor2}
q = c_2 c_1 d_{4c} d_{4n} d_3 d_2.
\end{equation}
We count our rings $R$ by counting these overrings $R'$ with multiplicity 
given by the number of $R$ thus contained in any such $R'$, which is 
$\ll q^{\epsilon} d_{4c}$ by Lemma \ref{lemma_overrings} (ii).
We then replace 
the form $x'$ corresponding to each such $R'$
by $x'/d_{4c}$.

The total contribution to \eqref{eqn:bound_delta_2b} is therefore bounded above by
\begin{equation}\label{eqn:total_contribution}
Q^{\epsilon} \sum_{c_2, c_1, d_{4c}, d_{4n}, d_3, d_2}
\frac{1}{c_2^2 c_1^3 d_{4c}^2 d_{4n}^3 d_3^4 d_2^5}
\sum_{\substack{|\Disc(x')|  < \frac{N}{c_2^8 c_1^4 d_{4c}^6 d_{4n}^2 d_3^2} \\
d_{4n}^2 d_3 d_2^2 \mid \Disc(x')}} 1,
\end{equation}
where the outer sum is over all choices of the six variables whose product \eqref{eq:factor2} is in $[Q, 2Q]$.

When $N$ is not too large---when $N \leq Q^{100}$, say--- we first sum the variable $d_3$ over dyadic intervals $[D_3, 2D_3]$. 
For each fixed $x'$ in the inner sum, 
there are at most $O(Q^{\epsilon})$  such $d_3$ with $d_3 \mid \Disc(x')$, 
and so 
the sum in \eqref{eqn:total_contribution} is bounded above by
\begin{equation}\label{eqn:total_contribution2}
Q^{\epsilon} \sum_{D_3}
\sum_{c_2, c_1, d_{4c}, d_{4n}, d_2}
\frac{1}{c_2^2 c_1^3 d_{4c}^2 d_{4n}^3 D_3^4 d_2^5}
\sum_{\substack{|\Disc(x')| < \frac{N}{c_2^8 c_1^4 d_{4c}^6 d_{4n}^2 D_3^2} \\
d_{4n}^2 d_2^2 \mid \Disc(x')}} 1.
\end{equation}
By Proposition \ref{prop:tail_no_eps}, the inner sum is 
$\ll Q^{\epsilon} \frac{N}{c_2^8 c_1^4 d_{4c}^6 d_{4n}^2 D_3^2} \cdot \frac{1}{d_{4n}^2 d_2^2}$,
so that the above simplifies~to
\begin{equation}\label{eqn:messy2}
Q^{\epsilon}
\sum_{D_3} 
\sum_{c_2, c_1, d_{4c}, d_{4n}, d_2}
\frac{N}{c_2^{10} c_1^7 d_{4c}^8 d_{4n}^7 D_3^6 d_2^7}.
\end{equation}
The double sum is over choices of the six variables whose product is $> Q$, 
so that the sum 
is $\ll N Q^{-6 + \epsilon}$.

For $N > Q^{100}$, Proposition \ref{prop:uniformity} more than suffices to bound the inner sum in \eqref{eqn:total_contribution},
again yielding a bound $\ll N Q^{-6 + \epsilon}$. 

We have thus proven 
\eqref{eqn:bound_delta_2b} in both cases, and therefore
Proposition \ref{prop:bound_delta_2}. This yields Theorem~\ref{thm_rc}
(with $(\log X)^{2.09}$ replaced with $X^\epsilon$)
and \ref{thm_rc_torsion}.

\section{Proof of Theorem \ref{thm_lc_averaged}: Local Conditions}\label{sec:lc}

Let $U$, $r$, and $t$ be squarefree and coprime integers, and let $\Sigma^{r, t}$ be a collection of local specifications satisfying the hypotheses
described immediately before Theorem \ref{thm_lc_averaged}.
For simplicity, we 
assume for each $p \mid U$ (and we know a priori for each $p \mid rt$) that $\Sigma_p$ consists of all extensions having 
one of the five cubic splitting types; by summation, this will imply
the result when $\Sigma_p$ is a union of splitting types.
In Remark \ref{remark:other_split}, we describe
how to amend the proof to accommodate $\Sigma_p$ that are not a union of splitting types.

We always assume that $r$ and $t$ are coprime to $6$, incorporating any $2$- and $3$-adic conditions into the $U$ component instead.
Conversely, for each $p > 3$ dividing $U$, we assume that $\Sigma_p$ corresponds to
one of the three {\itshape unramified} splitting types; ramified splitting types may be incorporated using the variables $r$ and $t$ instead.

As described in Section \ref{subsec:lc}, there is an integer $M$, called the {\itshape conductor} of $\Sigma^{r, t}$ and 
divisible by precisely the primes dividing $U rt$,
such that the partial collection $(\Sigma_p)_{p \mid U r t}$ corresponds to a function
$\Phi_M : V(\Z/M\Z) \rightarrow \{0, 1 \}$ of the form $\otimes_{p^a \mid \mid M} \Phi_{p^a}$, where $\Phi_{p^a}$ is any of the following functions:
\begin{itemize}
\item $\Theta_{(111), p}, \Theta_{(21), p}, \Theta_{(3), p} : V(\Z/p\Z) \rightarrow \{0, 1\}$, 
the characteristic functions of the three unramified splitting types (totally split, partially split, or inert), respectively. These functions are automatically supported only on forms that are maximal at $p$. \smallskip
\item $\Theta_{(1^2 1), p^2} : V(\Z/p^2\Z) \rightarrow \{0, 1\}$, the characteristic function of the partially ramified splitting type, corresponding to binary cubic forms that are maximal at $p$ and have exactly two roots in~$\P^1(\F_p)$, one of which is a double root.
\smallskip
\item $\Theta_{(1^3), p^2} : V(\Z/p^2\Z) \rightarrow \{0, 1\}$, the characteristic function of the totally ramified  splitting type, corresponding to binary 
cubic forms that are maximal at $p$ and have a triple root in $\P^1(\F_p)$. 
\end{itemize}

To accommodate our analysis, we also introduce the following
additional functions:
\begin{itemize}
\item $\Theta_{\textdivblahblah, p} : V(\Z/p\Z) \rightarrow \{0, 1\}$, the characteristic function of binary cubic forms $x \!\pmod{p}$ for which 
$p \mid \Disc(x)$, but without any maximality condition. \smallskip
\item $\Theta_{\textdivblahblah^2, p^2} : V(\Z/p^2\Z) \rightarrow \{0, 1\}$, the characteristic function of forms $x$ for which 
$p^2 \mid \Disc(x)$, but without any maximality condition. \smallskip
\item $\Theta_{\textnmax, p^2} : V(\Z/p^2\Z) \rightarrow \{0, 1\}$, the characteristic function of those forms that are nonmaximal at $p$.
\end{itemize}
The last two functions were introduced with the notation $\Psi_{q^2}$ in the previous section, and 
Fourier transform bounds for these functions were obtained
in Proposition
\ref{lem:ft_eval}.
For $p\neq 2, 3$, we have the relation
\begin{equation}\label{eq:Phip_relations}
\Theta_{(1^2 1), {p^2}} = \Theta_{\textdivblahblah, p} - \Theta_{\textdivblahblah^2, {p^2}}
\end{equation}
when one views $\Theta_{\textdivblahblah, p}$ as a function on $V(\Z/p^2\Z)$. 
We also have
$\Theta_{(1^3), {p^2}} = \Theta_{\textdivblahblah^2, {p^2}} - \Theta_{\textnmax, {p^2}}$, implying that $\Theta_{(1^3), {p^2}}$ satisfies the same Fourier transform bounds in Proposition \ref{lem:ft_eval}.

We use the relation \eqref{eq:Phip_relations} to write
\begin{equation}\label{eq:Phip2}
N_{\leq 3}^\pm(X, \Sigma^{r, t})
= 
\sum_{d \mid r} \mu(d) \sum_{\substack{q =1 \\ (q, M) = 1}}^{\infty} \mu(q)
N^{\pm}(X, 
\Phi_U \Theta_{\textdivblahblah, \frac{r}{d}} \Theta_{\textdivblahblah^2, d^2} \Theta_{(1^3), t^2} \Psi_{q^2}),
\end{equation}
where
$\Phi_U = \otimes_{p^a \mid \mid U} \Phi_{p^a}$, with 
$\Phi_{p^a}$ denoting any of
$\Theta_{(111), p}, \Theta_{(21), p},$ or $\Theta_{(3), p}$ for $p \nmid 6$, and 
$\Psi_{q^2} = \prod_{p \mid q} \Psi_{p^2}$, with
$\Psi_{p^2}$ denoting one of the two functions according as $\Sigma_p = A_p$
or $\Sigma_p = A'_p$ as in Section \ref{sec:direct_proof}.
We let 
\begin{equation}\label{eq:exphi}
E(X,\Phi_m)
:=
\sum_{\substack{q =1 \\ (q, m) = 1}}^{\infty} \left|
N^{\pm}(X, \Phi_m \Psi_{q^2})
- \sum_{\sigma \in \{ 1, \frac{5}{6} \}} \frac{X^\sigma}{\sigma}
\cdot \Res_{s = \sigma} \xi^{\pm}(s, \Phi_m \Psi_{q^2}) \right|.
\end{equation}
We will sum this error term over the functions
\[
\Phi_m = \Phi_U \Theta_{\textdivblahblah, \frac{r}{d}} \Theta_{\textdivblahblah^2, d^2} \Theta_{(1^3), t^2}
\]
appearing in \eqref{eq:Phip2},
and in this section we will prove the bound
\begin{equation}\label{eq:lc_setup_2}
\sum_{r \leq R} \sum_{t \leq T} \sum_{d\mid r}
\left|
E(X_{r,t}, \Phi_m)
 \right|
\ll X^{2/3 + \epsilon} U^{2/3} R^{2/3}.
\end{equation}
provided that $\frac{X}{2} \leq X_{r, t} \leq X$ for each $r$ and $t$.
Once this bound is established,
Theorem 1.4 is proved as follows:

We first note that for each $r$ and $t$, the inner sum over $d$
in \eqref{eq:lc_setup_2} is
the sum over the set of all $\Phi_m$ appearing in \eqref{eq:Phip2},
and by \eqref{eq:exphi} is therefore a bound on the error made
in approximating  $N_{\leq 3}^\pm(X_{r, t}, \Sigma^{r, t})$
in terms of residues of Shintani zeta functions.
We write out the analogue of \eqref{eq:sums},
with the quantities $C_p(\Sigma_p)$ and
$K_p(\Sigma_p)$ of Table \ref{table:error_exponents} coming from our
residue formula \eqref{eq:res_other}.
We use Propositions \ref{lem:quad_count} and~\ref{prop:red1}
to relate $N^{\pm}_{\leq 3}(X_{r, t}, \Sigma^{r, t})$
to $N^{\pm}_3(X_{r, t}, \Sigma^{r, t})$, and note that the
error terms in Proposition~\ref{lem:quad_count} are less than those
in Theorem~\ref{thm_lc_averaged}.
Thus we have a proof of Theorem~\ref{thm_lc_averaged},
provided that $\frac{X}{2} \leq X_{r, t} \leq X$ for each $r$ and $t$.
Finally, we extend \eqref{eq:lc_setup_2} to allow arbitrary $X_{r, t} \leq X$
by dividing the interval $[1, X]$ into $O(X^{\epsilon})$ dyadic intervals,
considering the contribution of those $(r, t)$ with $X_{r, t}$ in each interval,and then summing the resulting error terms.
The sum of the error terms is again $O(X^{2/3 + \epsilon} U^{2/3} R^{2/3})$,
finishing the proof of Theorem~\ref{thm_lc_averaged}.

To show \eqref{eq:lc_setup_2}, we rearrange the problem and prove:
\begin{proposition}\label{prop:lc_sum_of_errors}
Given parameters $R'$ and $T'$,
we have
\begin{equation}\label{eq:lc_sum_of_errors}
\sum_{\Phi_m} 
\left|
E(X_{\Phi_m},\Phi_m)
 \right|
\ll X^{2/3 + \epsilon} U^{2/3} (R')^{2/3},
\end{equation}
where $\Phi_m = \Phi_U \Theta_{\textdivblahblah, \frac{r}{d}} \Theta_{\textdivblahblah^2, d^2} \Theta_{(1^3), t^2}$ as above,
and the sum is over $(r,t,u)$ satisfying:
$r$ and $t$ range over squarefree integers coprime to $U$ and to each other, and $d$ ranges over divisors of $r$;
with $r' := r/d \in [R', 2R']$ and $t' := dt \in [T', 2T']$.
Here 
$X_{\Phi_m}$ may vary arbitrarily within a dyadic interval $\frac{X}{2} \leq X_{\Phi_m} \leq X$ for each $\Phi_m$.
\end{proposition}
To obtain \eqref{eq:lc_setup_2} from Proposition \ref{prop:lc_sum_of_errors},
we extend the ranges of $r'$ and $t'$ in the left hand side
of \eqref{eq:lc_sum_of_errors}
to the set of pairs $(r', t')$ with $r' \leq R$ and $r' t' \leq RT$
by summing the right hand side of \eqref{eq:lc_sum_of_errors}
over $O(X^{\epsilon})$ dyadic intervals.
The resulting bound is again
$O( X^{2/3 + \epsilon} U^{2/3} R^{2/3})$,
and this verifies \eqref{eq:lc_setup_2}.

Thus it is enough to prove Proposition \ref{prop:lc_sum_of_errors},
which we start now.
Theorem \ref{thm:landau_average} implies that
\begin{equation}\label{eq:lc_setup}
\sum_{\Phi_m} 
\left|
E(X_{\Phi_m},\Phi_m)
 \right|
\ll E'_1 + E'_2 + E_3',
\end{equation}
with 
\begin{align*}
E'_1 := & \ \sum_{\sigma = 1, 5/6} X^\sigma \sum_{\Phi_m}
 \sum_{\substack{q > Q}}
\left|\Res_{s = \sigma} \xi^{\pm}(s, \Phi_m \Psi_{q^2})\right|,
\\
E'_2 := & \ X^{\frac35}
\sum_{Q_1} 
\Bigg( \sum_{\Phi_m} \sum_{q \in [Q_1, 2Q_1]} \delta_1(\Phi_m \Psi_{q^2}) \Bigg)^{\frac35} 
\Bigg( \sum_{\Phi_m} \sum_{q \in [Q_1, 2Q_1]} \widehat{\delta_1}(\Phi_m \Psi_{q^2}) \Bigg)^{\frac25}
\\
= & \ X^{\frac35}
\sum_{Q_1} E_{2}''(Q_1)^{3/5} {\widehat E}_{2}''(Q_1)^{2/5},
\\
E_3' := &
\sum_{\Phi_m}
\sum_{q > Q} N^{\pm}(X, \Phi_m \Psi_{q^2}),
\end{align*}
where again $Q_1$ ranges over integer powers of $2$ less than $Q$, 
subject to the condition 
that \eqref{eq:error_smaller_2} holds, i.e., for each $Q_1 < Q$ that we have
\begin{equation}\label{eq:error_smaller_3}
{\widehat E''}_2(Q_1) \ll E''_2(Q_1) X,
\end{equation}
which we will check later.

For ease of reading we replace $r', t', R', T'$ by $r, t, R, T$ respectively.
Thus in particular
$\Phi_m = \Phi_U \Theta_{\textdivblahblah, r} \Theta_{\textdivblahblah^2, d^2} \Theta_{(1^3), t^2/d^2}$.
By Theorem~\ref{thm:fe},
the residues of $\xi^\pm(s,\Phi_m\Psi_{q^2})$
at $s=1$ and $5/6$ are 
$\ll X^\epsilon R^{-1} T^{-2} q^{-2}$
and $\ll X^\epsilon R^{-1} T^{-5/3} q^{-5/3}$ respectively, with analogous lower bounds with $X^{-\epsilon}$ in place of $X^{\epsilon}$, so that
\[
E_1' \ll X^{1+\epsilon} Q^{-1}T^{-1}+X^{5/6+\epsilon}Q^{-2/3}T^{-2/3}
\ll X^{1+\epsilon} Q^{-1}T^{-1}
\]
for $QT<X^{1/2}$, and
\begin{equation}\label{eq:e2pp_bound}
X^{-\epsilon} Q_1^{-1}T^{-1} \ll E_2''(Q_1)\ll X^{\epsilon} Q_1^{-1}T^{-1}.
\end{equation}

We can also use the {tail estimate} of Proposition \ref{prop:tail_no_eps} to bound $E_3'$: we have
\begin{align*}
E_3' = &
\sum_{r \in [R, 2R]} \sum_{t \in [T, 2T]} \sum_{d \mid t}
\sum_{q > Q} N^{\pm}(X, \Phi_m \Psi_{q^2}) \\
\ll & \
X^{\epsilon} \sum_{t \in [T, 2T]} \sum_{d \mid t}
\sum_{q > Q} N^{\pm}(X, \Phi_{Ut^2} \Psi_{q^2}) \\
\ll & \
X^{\epsilon} \sum_{t \in [T, 2T]} \sum_{d \mid t} \sum_{q > Q} X t^{-2 + \epsilon}
q^{-2 + \epsilon}
\\
\ll & \
X^{1 + \epsilon} Q^{-1} T^{-1},
\end{align*}
where in the second line $\Phi_{Ut^2}$ is the product of the local conditions
modulo $Ut^2$. Here we use the fact that any
$R$ with $0 < |\Disc(R)| < X$ satisfies $p \mid \Disc(R)$ for at most $O(X^{\epsilon})$
primes~$p$.

We claim the following average bound on
$\widehat{\delta_1}(\Phi_m \Psi_{q^2})$:
\begin{proposition}
We have
\begin{equation}\label{eq:e22}
{\widehat E}_2''(Q_1)
:=\sum_{q \in [Q_1, 2Q_1]} \widehat{\delta_1}(\Phi_m \Psi_{q^2})
\ll 
(RTUQ_1)^2 X^{\epsilon}.
\end{equation}
\end{proposition}
\begin{proof}
By the definition \eqref{eqn:def_delta1}, the claim to be proved is 
\begin{multline}\label{eqn:bound_delta_5a}
\sum_{q \in [Q, 2Q]} \sum_{r \in [R, 2R]} \sum_{t \in [T, 2T]} \sum_{d \mid t}
\sum_{|\Disc(x)| < N} |\widehat{\Phi_U}(x) \widehat{\Phi_{r}}(x) 
\widehat{\Theta_{\textnormal{div}^2, d^2}}(x) \widehat{\Theta_{(1^3), \frac{t^2}{d^2}}}(x)
\widehat{\Psi_{q^2}}(x)|\\
\ll
N
Q^{-6} 
R^{-2}
T^{-6}
U^{-2}
X^{\epsilon}.
\end{multline}
Here, and in what follows, the sum ranges over $q,r,t\in\Z$ such that
$qrtU$ is squarefree.

We expand the definition of $\Psi_{p^2}$ to be any of the functions 
$\Theta_{(1^3), p^2}$, 
$\Theta_{\textnmax, p^2}$ or $\Theta_{\textnormal{div}^2, p^2}$,
all of which
satisfy the Fourier transform bounds of Proposition \ref{prop:bound_delta_2}.
We may simplify \eqref{eqn:bound_delta_5a} by combining the $T$ and $Q$ variables. 
Since any product $qt$ with $q \in [Q, 2Q]$ and $t \in [T, 2T]$
has $O((QT)^{\epsilon})$ such factorizations, and any $t$ in the sum has at most $O(X^{\epsilon})$ divisors,
it suffices to prove that
\begin{equation}\label{eqn:bound_delta_5b}
\sum_{q \in [Q, 2Q]} \sum_{r \in [R, 2R]} 
\sum_{|\Disc(x)| < N} |\widehat{\Phi_U}(x) \widehat{\Phi_{r}}(x) 
\widehat{\Psi_{q^2}}(x)|
\ll
N
Q^{-6} 
R^{-2}
U^{-2}
X^{\epsilon}.
\end{equation}
(Our claim is initially reduced to 
proving a variation of \eqref{eqn:bound_delta_5b} with a sum over $q \in [QT, 4QT]$ on the left and $(QT)^{-6}$ in place of $Q^{-6}$
on the right; a change of variables and two applications of \eqref{eqn:bound_delta_5b} yield such a bound.)

The Fourier transform $\widehat{\Phi_m}$ is multiplicative. 
We require the following bounds on the Fourier transforms of the constituent functions
$\widehat{\Phi_p}$ for $p \mid m$ due to Mori~\cite[Theorem~1]{Mori}; see also
\cite[Proposition 6.1]{TT_L} and \cite[Theorem~11 and Corollary~12]{TT_orbital}.
 
\begin{proposition}
Let $p$ be a prime, and let $\Phi_p : V(\Z/p\Z) \rightarrow \{ 0, 1 \}$ be any of
$\Phi_{(111), p}$, $\Phi_{(21), p}$, or $\Phi_{(3), p}$.
Then 
\begin{equation}
|\widehat{\Phi_p}(x)| \ll
\begin{cases}
	1	& x=0,\\
	p^{-1}		& \text{$x$ has a triple root modulo $p$},\\
		p^{-2}		& \text{otherwise}.
\end{cases}
\end{equation}
\end{proposition}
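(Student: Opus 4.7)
The plan is a case analysis on the $\GL_2(\F_p)$-orbit of $x$. Since each $\Phi_p$ under consideration is $\GL_2(\F_p)$-invariant and the pairing $[\cdot,\cdot]$ on $V(\F_p)\times V^*(\F_p)$ is $\SL_2(\F_p)$-invariant (changing at most by a $\det$-twist under $\GL_2(\F_p)$), a change of variables in \eqref{eta_hat} shows that $|\widehat{\Phi_p}(x)|$ depends on $x$ only through its $\GL_2(\F_p)$-orbit in $V^*(\F_p)\simeq V(\F_p)$. The nonzero orbits are indexed by the five splitting types, with convenient representatives $u^3$, $u^2v$, $uv(u-v)$, $u(u^2-\alpha v^2)$ for $\alpha$ a non-square, and an irreducible cubic. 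The case $x=0$ is trivial since $|\widehat{\Phi_p}(0)|\le p^{-4}\cdot\#\{y:\Phi_p(y)=1\}\ll 1$, so we may fix a nonzero representative $x_0$ of each of the remaining orbits and bound $|\widehat{\Phi_p}(x_0)|$.

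For each such $x_0$ and each of the three choices $\Phi_p\in\{\Phi_{(111),p},\Phi_{(21),p},\Phi_{(3),p}\}$, I would parametrize the $y$-orbit supporting $\Phi_p$ via the coset space $\GL_2(\F_p)/\Stab(y_0)$, writing
\[
p^4\,\widehat{\Phi_p}(x_0)=\sum_{y\in\mathrm{orbit}(y_0)}\psi\bigl([x_0,y]/p\bigr)=\frac{1}{|\Stab(y_0)|}\sum_{g\in\GL_2(\F_p)}\psi\bigl([x_0,g\cdot y_0]/p\bigr).
\]
Writing $g=\left(\begin{smallmatrix}a&b\\c&d\end{smallmatrix}\right)$ and expanding $[x_0,g\cdot y_0]$ as a polynomial in $a,b,c,d\in\F_p$, the task reduces to bounding an explicit four-variable character sum restricted to $ad-bc\neq 0$. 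In each of the three unramified cases, the polynomial in the exponent is sufficiently nondegenerate that repeated completion of squares together with Gauss-sum evaluation (or Weil's bound for curves, after fibering on a single variable) produces square-root cancellation in all four variables. The total cancellation is thus $O(p^2)$, yielding the bound $|\widehat{\Phi_p}(x_0)|\ll p^{-2}$ after dividing by $p^4$.

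The main obstacle, and the reason for the weaker bound in the triple-root case, is that when $x$ lies in the $(1^3)$-orbit, so that we may take $x_0=u^3$, the stabilizer $\Stab(x_0)\subset\GL_2(\F_p)$ is a Borel-type subgroup of order $\asymp p^2$. Consequently, after quotienting the sum by this stabilizer symmetry, the exponential sum admits genuine cancellation in only three of the four entries of $g$, and the remaining sum contributes a factor of size $p$. The resulting inner sum has size $O(p^3)$, leading to the announced bound $|\widehat{\Phi_p}(x_0)|\ll p^{-1}$. Carrying out these explicit character-sum evaluations for each combination of splitting types of $x$ and $y$ is precisely what is done in Mori's paper \cite{Mori}; equivalently, the local orbital-integral description of $\widehat{\Phi_p}$ in \cite[Proposition~6.1]{TT_L} rewrites it as a $p$-adic density involving \'etale cubic $\Q_p$-algebras, from which the stated pointwise bounds can be read off directly once one tracks the $p$-dependence of the local integrals.
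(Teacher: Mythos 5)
The paper does not actually prove this proposition; it cites it to Mori \cite[Theorem~1]{Mori}, and also to \cite[Proposition~6.1]{TT_L} and \cite[Theorem~11 and Corollary~12]{TT_orbital}. Your ultimate deferral to those same sources is therefore in line with the paper's treatment, and your framing---orbit-invariance of $|\widehat{\Phi_p}(x)|$, the trivial $x=0$ estimate by counting the support, and parametrizing the supporting orbit via $\GL_2(\F_p)/\Stab(y_0)$---is a reasonable way to organize the problem.

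The gap is that the heuristic offered for the exponential-sum estimate does not withstand scrutiny. Expanding $[x_0,g\cdot y_0]$ in the entries of $g=\twtw{a}{b}{c}{d}$, the $(\det\gamma)^{-1}$ appearing in the $\GL_2$-action makes the exponent a cubic polynomial in $a,b,c,d$ divided by $ad-bc$: this is a rational function on the open set $ad-bc\neq0$, not a quadratic form, so ``repeated completion of squares'' does not apply, and a generic appeal to square-root cancellation in a four-variable sum is unjustified without a careful analysis of the singular locus. Likewise, the account of the triple-root case by pointing at the order of $\Stab(x_0)$ is only suggestive: after quotienting by a stabilizer of size $\asymp p^2$ (or by the $\SL_2$-stabilizer, of size $\asymp p$, which is what the invariance of $[\,\cdot\,,\,\cdot\,]$ actually lets you use), one still has to exhibit genuine cancellation in the quotient sum, and nothing in the sketch establishes the required $O(p^3)$ bound on the $g$-sum. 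The real content of the proposition is the explicit, type-by-type evaluation of these orbital Gauss sums in terms of classical Gauss and Kloosterman-type sums, which is exactly what Mori and \cite{TT_orbital} carry out; as you concede in your final sentence, the rigor of your argument ultimately rests entirely on those cited computations, just as the paper's does.
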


\begin{proposition}
Let $p$ be a prime.
The function
$\Phi_{\textdivblahblah, p} \ : V(\Z/p\Z) \rightarrow \{ 0, 1 \}$
satisfies 
\begin{equation}
|\widehat{\Phi_{\textdivblahblah, p}}(x)| \ll
\begin{cases}
	p^{-1}	& x=0,\\
	p^{-2}		& x \neq 0, \ \ p \mid \Disc(x),\\
		p^{-3}		& \text{otherwise}.
\end{cases}
\end{equation}
\end{proposition}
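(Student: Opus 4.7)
The plan is to combine the $\GL_2(\F_p)$-invariance of $\Phi_{\textdivblahblah, p}$ with character-sum bounds of Mori type. Since $\Disc$ is a relative invariant of the $\GL_2$-action on $V$, the function $\Phi_{\textdivblahblah, p}$ is $\GL_2(\F_p)$-invariant, and so is its Fourier transform; hence $\widehat{\Phi_{\textdivblahblah, p}}(x)$ depends only on the $\GL_2(\F_p)$-orbit of $x$, equivalently its splitting type modulo $p$. It therefore suffices to verify the three claimed bounds at one representative of each orbit. The case $x=0$ is immediate: $\widehat{\Phi_{\textdivblahblah, p}}(0) = p^{-4}\cdot\#\{y \in V(\F_p): p\mid\Disc(y)\}$, and since $\{\Disc=0\}\subset\A^4$ is an affine hypersurface of degree $4$, this count is $\ll p^3$, giving $\ll p^{-1}$.

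For $x\neq 0$, I would start from the partition identity
\[
\Phi_{\textdivblahblah, p} = \mathbf{1}_{V(\F_p)} - \bigl(\Phi_{(111), p} + \Phi_{(21), p} + \Phi_{(3), p}\bigr),
\]
which, together with $\widehat{\mathbf{1}_{V(\F_p)}}(x)=0$ for $x\neq 0$, yields
\[
\widehat{\Phi_{\textdivblahblah, p}}(x) = -\bigl(\widehat{\Phi_{(111), p}}(x) + \widehat{\Phi_{(21), p}}(x) + \widehat{\Phi_{(3), p}}(x)\bigr)\qquad(x\neq 0).
\]
A termwise application of the preceding proposition would give only $p^{-1}$ when $x$ has a triple root modulo $p$ and $p^{-2}$ otherwise---a factor of $p$ weaker than what is required. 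The required improvement must therefore come from cancellation between the three orbital Fourier transforms, which is encoded in the refined character-sum identities of Mori \cite{Mori} (the same computations underlying the preceding proposition, but tracked one order more carefully in $p$); see also \cite[Prop.~6.1]{TT_L} and \cite[Thm.~11, Cor.~12]{TT_orbital}. Substituting these identities produces the claimed $\ll p^{-2}$ when $p\mid\Disc(x)$ and $\ll p^{-3}$ when $p\nmid\Disc(x)$.

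The main obstacle is the sharp $p^{-3}$ bound in the third case. A direct route that avoids a separate appeal to the orbital identities is to write $\Phi_{\textdivblahblah, p}(y) = p^{-1}\sum_{t \in \F_p}\exp(2\pi i\,t\Disc(y)/p)$ and evaluate
\[
\widehat{\Phi_{\textdivblahblah, p}}(x) = \frac{1}{p^5}\sum_{t \in \F_p^*}\sum_{y \in V(\F_p)} \exp\!\Bigl(\tfrac{2\pi i(t\Disc(y) + [x,y])}{p}\Bigr)
\]
(the $t=0$ contribution vanishing for $x\neq 0$), and to apply Deligne-type bounds to the inner four-variable exponential sum, stratifying by the splitting type of~$x$. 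The hypothesis $p\nmid\Disc(x)$ is precisely what guarantees the nondegeneracy of the critical locus of the phase $t\Disc(y)+[x,y]$ for each $t\in\F_p^*$, which in turn is what yields the extra factor of $p^{-1}$; the triple-root and double-root cases follow by the same method with a weakened nondegeneracy check.
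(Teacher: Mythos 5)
The paper offers no proof of this proposition: it is quoted directly from Mori and from the second and third authors' earlier works (\cite{TT_L}, \cite{TT_orbital}). Your first route --- the partition identity $\Phi_{\textdivblahblah,p}=\mathbf{1}-(\Phi_{(111),p}+\Phi_{(21),p}+\Phi_{(3),p})$ followed by an appeal to ``the refined character-sum identities of Mori'' --- is therefore essentially a restatement of that same citation rather than a proof. One small inaccuracy there: when $x$ has splitting type $(1^21)$, so $p\mid\Disc(x)$ but $x$ has no triple root, the termwise bound $3\cdot p^{-2}\ll p^{-2}$ already matches the target; the factor-of-$p$ shortfall occurs only in the triple-root case and the case $p\nmid\Disc(x)$.

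Your alternative ``direct route'' via Deligne-type bounds has a genuine gap. The $x=0$ estimate and the vanishing of the $t=0$ term are fine. But the target $p^{-3}$ for $p\nmid\Disc(x)$ does not follow from the argument sketched. A Deligne/Weil bound on the inner four-variable sum $\sum_{y}e((t\Disc(y)+[x,y])/p)$ can give at best $\ll p^{2}$ for each fixed $t\in\F_p^{\ast}$, leading to $\widehat{\Phi_{\textdivblahblah,p}}(x)\ll p^{-5}\cdot p\cdot p^{2}=p^{-2}$. Equivalently, $\widehat{\Phi_{\textdivblahblah,p}}(x)=p^{-4}\sum_{y:\,p\mid\Disc(y)}e([x,y]/p)$ is a sum over a locus of size $\asymp p^3$, so square-root cancellation alone predicts only $\ll p^{-5/2}$, not $p^{-3}$. ``Nondegeneracy of the critical locus'' is the hypothesis that would \emph{certify} square-root cancellation, not improve on it; the further half power requires cancellation in the $t$-sum (or, equivalently, special structure of the sum over the discriminant variety beyond purity), and you do not say where that comes from. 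There is also a technical obstruction even for the baseline estimate: the projective zero locus of $\Disc$ in $\P^3$ is singular along the cuspidal (triple-root) stratum, so the version of Deligne's theorem that applies to exponential sums with smooth leading form is not directly available, and a stratification or Laumon-style argument would be needed. In short, the $p^{-3}$ bound on the generic orbit in Mori's work and in \cite{TT_orbital} is an \emph{explicit} computation of $\widehat{\Phi_{\textdivblahblah,p}}$ orbit by orbit; the cancellation is exhibited rather than deduced from a general purity principle, and your sketch does not reproduce it.
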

Analogously to \eqref{eqn:total_contribution}, we therefore obtain that \eqref{eqn:bound_delta_5b} is bounded above by
\begin{equation}\label{eqn:total_contribution3}
X^{\epsilon} \sum_{c_2, c_1, d_{4c}, d_{4n}, d_3, d_2}
\frac{1}{c_2^2 c_1^3 d_{4c}^2 d_{4n}^3 d_3^4 d_2^5}
\sum_{r \in [R, 2R]}
\sum_{\substack{|\Disc(x')|  < \frac{N}{c_2^8 c_1^4 d_{4c}^6 d_{4n}^2 d_3^2} \\
d_{4n}^2 d_3 d_2^2 \mid \Disc(x')}} 
 |\widehat{\Phi_U}(x') \widehat{\Phi_{r}}(x')|,
\end{equation}
where the notation is as in \eqref{eqn:total_contribution}, and in particular the outer sum is over all choices
of the variables, squarefree and coprime to each other and to $Ur$, and the product is in $[Q, 2Q]$. 

We now unravel the inner sum by summing over all factorizations $U = u_1 u_2 u_3$
and $r = r_1 r_2 r_3$, where $u_1 r_1 \mid x$, and $u_2 r_2 \nmid x$ but
$u_2^2 r_2 \mid \Disc(x)$, and then replacing $x'$ with $x'' := \frac{x'}{u_1 r_1}$.
We thus see that the innermost sum is bounded above by
\[
X^{\epsilon}
\sum_{u_1 u_2 u_3 = U} 
\sum_{r_1 r_2 r_3 = r}
\sum_{\substack{|\Disc(x'')|  < \frac{N}{c_2^8 c_1^4 d_{4c}^6 d_{4n}^2 d_3^2 u_1^4 r_1^4} \\
d_{4n}^2 d_3  d_2^2 u_2^2 r_2 \mid \Disc(x'')}} (u_2 r_1)^{-1} (u_3 r_2)^{-2} r_3^{-3},
\]
so that the total sum is bounded above by
\begin{equation}\label{eqn:total_contribution2a}
X^{\epsilon} 
\sum_{\substack{c_2, c_1, d_{4c}, d_{4n}, d_3, d_2 \\ u_1, u_2, u_3, r_1, r_2, r_3}}
\frac{1}{c_2^2 c_1^3 d_{4c}^2 d_{4n}^3 d_3^4 d_2^5 
u_2 r_1 u_3^2 r_2^2 r_3^3}
\sum_{\substack{|\Disc(x'')|  < \frac{N}{c_2^8 c_1^4 d_{4c}^6 d_{4n}^2 d_3^2 u_1^4 r_1^4} \\
d_{4n}^2 d_3  d_2^2 u_2^2 r_2 \mid \Disc(x'')}}
1.
\end{equation}
As before, for small $N$ we sum the variable $d_3$ over dyadic intervals $[D_3, 2D_3]$, and we similarly sum 
$r_2$ over dyadic intervals $[R_2, 2R_2]$, and use the fact that there are at most
$O(N^{\epsilon}) = O(X^\epsilon)$ pairs $d_3$ and $r_2$ with $d_3 r_2 \mid \Disc(x'')$ for any fixed $x''$ with $|\Disc(x'')| \ll N$.
The above sum therefore reduces to
\begin{align*}
& 
X^\epsilon 
\sum_{D_3, R_2} 
\sum_{\substack{c_2, c_1, d_{4c}, d_{4n}, d_2 \\ u_1, u_2, u_3, r_1, r_3}}
\frac{1}{c_2^2 c_1^3 d_{4c}^2 d_{4n}^3 D_3^4 d_2^5 
u_2 r_1 u_3^2 R_2^2 r_3^3} 
\cdot \frac{N}{c_2^8 c_1^4 d_{4c}^6 d_{4n}^2 D_3^2 u_1^4 r_1^4} \cdot
\frac{1}{d_{4n}^2  d_2^2 u_2^2 } \\
= \ &
X^\epsilon 
\sum_{D_3, R_2} 
\sum_{\substack{c_2, c_1, d_{4c}, d_{4n}, d_2 \\ u_1, u_2, u_3, r_1, r_3}}
\frac{1}{c_2^{10} c_1^7 d_{4c}^8 d_{4n}^7 D_3^6 d_2^7}
\cdot
\frac{1}{u_1^4 u_2^3 u_3^2 r_1^5 R_2^2 r_3^3} \cdot N \\
\ll \ &
X^\epsilon U^{-2} R^{-2}
\sum_{D_3} 
\sum_{c_2, c_1, d_{4c}, d_{4n}, d_2}
\frac{1}{c_2^{10} c_1^7 d_{4c}^8 d_{4n}^7 D_3^6 d_2^7} \cdot N \\
\ll \ &
X^\epsilon Q^{-6} U^{-2} R^{-2} N,
\end{align*}
as desired, establishing \eqref{eq:e22}. For large $N$, as before we apply Proposition \ref{prop:uniformity} to obtain the same bound.
\end{proof}

We now complete the proof of Proposition \ref{prop:lc_sum_of_errors}.
Collecting all of our error terms, we have
\begin{align} \label{eq:error_collected}
E'_1 + E'_2 + E'_3 \ll & \ X^{\epsilon} \Bigl( X Q^{-1} T^{-1} + X^{3/5} \sum_{Q_1} \bigl( Q_1^{-1} T^{-1} \bigr)^{3/5}
(R^2 T^2 U^2 Q_1^2)^{2/5} + XQ^{-1}T^{-1} \Bigr) \\ \nonumber
\ll & \ X^{\epsilon} \bigl( X^{3/5} Q^{1/5} T^{1/5} R^{4/5} U^{4/5}
+ XQ^{-1}T^{-1} \bigr).
\end{align}
Suppose $U^2 R^2 T^3<X^{1 - \nu}$ for some small $\nu > 0$.
Optimizing the bound,
we choose $Q = X^{1/3 - \epsilon} T^{-1} R^{-2/3} U^{-2/3}$.
The lower bound in \eqref{eq:e2pp_bound}
shows that \eqref{eq:error_smaller_3} is satisfied, and so we have
\[
E'_1 + E'_2 + E'_3 \ll X^{2/3 + \epsilon} U^{2/3}  R^{2/3},
\]
finishing the proof in this case.

If instead $U^2 R^2 T^3\geq X^{1 - \nu}$, we have
\[
\sum_{\Phi_m} 
\left|
E(X_{\Phi_m},\Phi_m)
 \right|
\leq E'_1 + E_3'
\]
with $Q = 0.9$, directly from the definition of $E(X_{\Phi_m},\Phi_m)$.
The bounds on $E_1'$ and $E_3'$ proved above are true for any $Q$,
and we have $E_1'+E_3'\ll X^{1+\epsilon'}T^{-1}$ for any $\epsilon'>0$.
This is bounded above by $X^{2/3 + \epsilon} U^{2/3} R^{2/3}$
in the present case. This finishes the proof.

\begin{remark}\label{remark:other_split}
As stated in Theorem \ref{thm_lc}, our method can handle essentially arbitrary local conditions. 
Given any $\GL_2(\Z/M'\Z)$-invariant
function $\Phi_{M'} : V(\Z/M'\Z) \rightarrow \C$ with $(M', M) = 1$ and $|\Phi_{M'}(x)| \leq 1$ for all $x$, we may replace $\Phi_M$ and $\Phi_m$ by $\Phi_M \Phi_{M'}$ 
and $\Phi_m \Phi_{M'}$
throughout the~proof. 

In particular, for $p^a \mid\mid M'$ one may choose  $\Phi_{p^a} : V(\Z/p^a \Z) \rightarrow \{ 0, 1\}$ corresponding to a fixed ramified cubic
algebra over $\Q_p$,
the possibilities for which were described below
Theorem~\ref{thm_lc_averaged}. Again insisting on maximality at $p$,
we may always take $a = 2$ when $p > 3$.

The assumption that
$U^2 R^2 T^3 < X^{1 - \epsilon}$ may be strengthened to $U^2 R^2 T^3 M'^4 < X^{1 - \epsilon}$.
 One must
then obtain residue formulas for the Shintani zeta function (carried out in some cases in \cite[Section 8]{TT_L}),
check condition \eqref{eq:cond_lower_error}, and prove an analogue of 
Proposition \ref{lem:quad_count}, showing that the contribution of reducible rings is as expected. 
The remaining arguments are then the same; in \eqref{eq:error_collected}, one
multiplies the expression $R^2 T^2 U^2 Q_1^2$ by $M'^4$, or by a smaller power of $M'$ given suitable bounds on $\widehat{\Phi_{M'}}(x)$.

One thus obtains the counting function for all cubic orders
satisfying the local conditions prescribed by $\Phi_M$ and $\Phi_{M'}$, and which are maximal except as prescribed by $\Phi_{M'}$.

\end{remark}

\section{Proof of Theorem \ref{thm_rc} using a discriminant-reducing identity}\label{sec:disc_reduce_1}

We now describe a ``discriminant-reducing identity'' for nonmaximal cubic rings. It is essentially
equivalent to an identity in \cite[Section 9.1]{BST}, but we give a formulation and proof 
in the language of Shintani zeta functions. We then apply our identity to give a second proof of
Theorem \ref{thm_rc}, but with the $O(X^{2/3 + \epsilon})$ error term of Section \ref{sec:direct_proof}
improved to $O(X^{2/3} (\log X)^{\alpha})$ with any $\alpha > -\frac{1}{2} + \frac{5}{3^{3/5}}$.
Note that our identity allows for the presence of unrelated local conditions, so that
we could likely also obtain another proof of Theorems \ref{thm_lc} and \ref{thm_lc_averaged}
in the case where $\Sigma_p = A_p$ at every ordinary prime.

In this section, for each squarefree integer $q$, we write $\Psi_{q^2} : V(\Z/q^2 \Z) \rightarrow \{ 0, 1 \}$ for 
the characteristic function
of those 
$x \in V(\Z/q^2 \Z)$ that are nonmaximal at each prime divisor of~$q$.
We also write $\eta_q : V(\Z/q\Z) \rightarrow \Z$ for the function counting the number of roots of $x \in V(\Z/q\Z)$ in $\mathbb{P}^1({\Z/q\Z})$. 

\begin{proposition}\label{thm_shintani_bst}
For any
$\GL_2(\Z/M\Z)$-invariant function
$\Phi_M : V(\Z/M\Z) \rightarrow \C$ with $(M, q) = 1$, we have
\begin{equation}\label{eqn_shintani_bst_2}
\xi^{\pm}(s, \Psi_{q^2} \otimes \Phi_M) = 
q^{-4s} \sum_{k l m = q} \mu(l) k^{2s} \xi^{\pm}(s, \eta_{kl} \otimes \Phi_M).
\end{equation}
\end{proposition}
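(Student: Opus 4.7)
The plan is to first reduce to the case $q=p$ prime. Since $\Psi_{q^2}=\bigotimes_{p\mid q}\Psi_{p^2}$ and $\eta_{kl}=\bigotimes_{p\mid kl}\eta_p$ factor by the Chinese Remainder Theorem over the prime divisors of squarefree $q$ and $kl$, and the sum $\sum_{klm=q}\mu(l)k^{2s}(\cdot)$ itself factors multiplicatively in $q$, the identity for squarefree $q$ follows by iterating the prime case.

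When $q=p$, expanding the sum on the right gives
\[
\xi^\pm(s,\Psi_{p^2}\otimes\Phi_M) \;=\; p^{-2s}\,\xi^\pm(s,\eta_p\otimes\Phi_M) \;+\; p^{-4s}\,\bigl[\xi^\pm(s,\Phi_M)-\xi^\pm(s,\eta_p\otimes\Phi_M)\bigr].
\]
For a subring $R''\subset R'$ of squarefree index $n$ coprime to its content one has $|\Disc R''|=n^2|\Disc R'|$, and $\Phi_M$ depends only on $R\bmod M$ and is therefore unchanged when passing between a cubic ring and any of its $p$-power over- or subrings (since $(p,M)=1$). Comparing coefficients of $|D|^{-s}$ then reduces the identity to the arithmetic statement
\[
\sum_{\substack{|\Disc R|=D \\ R\text{ nonmax at }p}}\frac{\Phi_M(R)}{|\Aut R|} \;=\; \sum_{|\Disc R'|=D/p^2}\frac{\eta_p(R')\,\Phi_M(R')}{|\Aut R'|} \;+\; \sum_{|\Disc R''|=D/p^4}\frac{(1-\eta_p(R''))\,\Phi_M(R'')}{|\Aut R''|}.
\]

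I would then prove this coefficient-wise identity---essentially the discriminant-reducing identity of \cite[Section~9.1]{BST} recast in Shintani-zeta language---by a weighted bijection. For each nonmaximal $R$ of discriminant $D$ with content coprime to $p$, Lemma~\ref{lemma_overrings}(i) produces an overring $R'\supset R$ with $[R':R]=p$ and $|\Disc R'|=D/p^2$; by Levi--Delone--Faddeev, this embedding is encoded by a point $\tau\in\mathbb{P}^1(\F_p)$ that is a root of $f_{R'}$ modulo $p$, such points are counted by $\eta_p(R')$, and $\Aut R$ arises as the stabilizer of $\tau$ in $\Aut R'$. These pairs match the first sum on the right. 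Rings $R$ with $p$ dividing their content satisfy $R=\Z+pR''$ for a cubic ring $R''$ with $|\Disc R''|=D/p^4$, and the weight $1-\eta_p(R'')$ in the second sum compensates, by M\"obius inversion, for the overcounting that arises in the first sum when $R'$ is itself nonmaximal at $p$.

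The main obstacle is the careful tracking of automorphism-group weights, particularly in the content-$p$ case, where $|\Aut(\Z+pR'')|$ may exceed $|\Aut R''|$ in a way that must align precisely with the combinatorial factors $\eta_p$ and $1-\eta_p$. This I would handle by a local analysis at $p$: parameterizing the index-$p$ overrings of $\Z+pR''$ as lines in $R''/pR''\cong\F_p^2$ in the content-$p$ case, and using the Delone--Faddeev correspondence between roots of $f_{R'}\bmod p$ and index-$p$ subrings of $R'$ in the content-coprime case, to verify the identity term by term.
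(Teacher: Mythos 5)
Your reduction to $q=p$ prime and your expansion of the $klm=p$ sum are both correct, and the coefficient-wise identity you extract is exactly the content of the proposition. Where you diverge from the paper is in the method: the paper does \emph{not} pass to cubic rings, over-rings, and the BST-style discriminant-reducing combinatorics. Instead it works entirely on the binary-cubic-form side, introducing the sublattices $X_p=\{x_3\equiv 0\,(p),\,x_4\equiv 0\,(p^2)\}$ and $Y^p=\{x_1\equiv 0\,(p)\}$ of $V(\Z)$, the Borel-type subgroups $B_p,B^p\subset\GL_2(\Z)$ that preserve them, and the $\GL_2(\Q)$-element $\operatorname{diag}(1,1/p)$ that sends $X_p$ to $Y^p$ while rescaling discriminants by $p^{-2}$. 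It then evaluates the \emph{same} intermediate Dirichlet series over $B_p\backslash X_p$ in two ways: on the $Y^p$ side it is $p^{-2s}\xi^\pm(s,\eta_p\otimes\Phi_M)$ by an orbit-counting lemma identifying $B^p\backslash\Gamma$ with $\P^1(\F_p)$, and on the $X_p$ side it decomposes into a Davenport--Heilbronn nonmaximality contribution plus a contribution from $X_p\cap pV(\Z)$ that is related back to $\eta_p$ by the explicit matrix $\left(\begin{smallmatrix}0&1\\1&0\end{smallmatrix}\right)$. The orbit--stabilizer formalism thereby handles all of the automorphism-group bookkeeping automatically, via the index $[\Gamma_y:B^p_y]$ in \eqref{eq:eta-identity}, rather than requiring you to track $|\Aut R|$ through a chain of sub- and over-rings.

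Your route is essentially the \cite[Section~9.1]{BST} argument that the paper says it is deliberately recasting, so it is a legitimate alternative; the payoff is that it stays close to the arithmetic picture of cubic rings. But the places where you write ``I would then prove\ldots'' and ``This I would handle by\ldots'' are precisely the places where the delicacy lives, and they should not be underestimated. In particular: the overring $R'$ of index $p$ produced by Lemma~\ref{lemma_overrings}(i) is not obviously unique, so a bijection $R\leftrightarrow(R',\tau)$ must be set up with more care than ``Lemma~2.4(i) produces an overring''; the correspondence between roots of $f_{R'}$ mod $p$ and index-$p$ subrings (and how multiple roots, triple roots, and $p\mid\ct(R')$ each behave) needs to be pinned down so that the count really is $\eta_p(R')$; and the weight $1-\eta_p(R'')$ can be as negative as $-p$, so the ``M\"obius inversion compensates'' step is a genuine inclusion--exclusion argument that must be written out. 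None of this is a fatal objection --- BST did it --- but it is exactly the bookkeeping the paper's parabolic-orbit formulation is designed to avoid.
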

\begin{proof}
By induction on the number of prime factors of $q$, it suffices to assume that $q = p$ is prime.
For convenience write $\Gamma := \GL_2(\Z)$, and define
\begin{align*}
X_p &
=\{(x_1,x_2,x_3,x_4)\in V(\Z):  x_3 \equiv 0 \!\!\!\!\pmod{p}, \,\,x_4 \equiv 0 \!\!\!\!\pmod{p^2}\},
\\
Y^p &
=\{(x_1,x_2,x_3,x_4)\in V(\Z):  x_1 \equiv 0 \!\!\!\!\pmod{p}\},
\\
B_p &
=\bigl\{\twtw{\alpha}{\beta}{\gamma}{\delta} \in \Gamma: \gamma \equiv 0 \!\!\!\!\pmod{p}\bigr\}, \\
B^p &
=\bigl\{\twtw{\alpha}{\beta}{\gamma}{\delta} \in \Gamma: \beta \equiv 0 \!\!\!\!\pmod{p}\bigr\}.
\end{align*}
Then $B_p$ and $B^p$
preserve $X_p$ and $Y^p$ respectively.
The action of the matrix 
$\twtw{1}{0}{0}{1/p} \in\GL_2(\Q)$
induces a bijection $\phi : X_p \rightarrow Y^p$, 
such that if $gx = x'$ in $X_p$ with
$g = \twtw{\alpha}{\beta}{\gamma}{\delta} \in B_p$,
then $\widetilde{g} \phi(x) = \phi(x')$ with
$\widetilde{g} = \twtw{\alpha}{\beta p}{\gamma/p}{\delta} \in B^p$. 
Writing $B_{p, x}$ and $B^p_x$ for $\Stab_{B_p}(x)$ and $\Stab_{B^p}(x)$ respectively,
$\phi$ thus induces
the equality
\begin{equation}\label{eqn_shintani_bst_3}
\sum_{\substack{x \in B_p \backslash X_p\\ \pm\Disc(x)>0}}
\frac{1}{|B_{p,x}|} \Phi_M(x) |\Disc(x)|^{-s}
=
p^{-2s} \sum_{\substack{x \in B^p \backslash Y^p\\ \pm\Disc(x)>0}} \frac{1}{|B^p_x|} \Phi_M(x) |\Disc(x)|^{-s}.
\end{equation}

We claim that 
the right side of \eqref{eqn_shintani_bst_3} is
$p^{-2s} \xi^{\pm}(s, \eta_p \otimes \Phi_M)$; it suffices to show
that for each $x\in V(\Z)$ that
\begin{equation}\label{eq:eta-identity}
\eta_p(x)=\sum_{y\in B^p\backslash (\Gamma x\cap Y^p)}
	[\Gamma_y:B^p_y].
\end{equation}
Recall that there is a transitive right action of $\GL_2(\Z)$ on
$\P^1(\Z/p\Z)$, given by matrix multiplication when we represent a 
point $(t_0 : t_1) \in \P^1$ as a row vector.
With this action, $B^p$ is the stabilizer of $(1:0)\in \P^1(\Z/p\Z)$.
We identify $B^p\backslash \Gamma$ with
$\P^1(\Z/p\Z)$ via $[g]\mapsto(1:0)g$, so that
\[
\eta_p(x)=\#\{[g]\in B^p\backslash \Gamma \mid gx\in Y^p\}.
\]
Consider the map
\[
\{[g]\in B^p\backslash \Gamma\mid gx\in Y^p\}
\longrightarrow B^p\backslash (\Gamma x\cap Y^p),
\qquad
[g]\longmapsto [gx].
\]
It is surjective by construction. Writing $y = gx$, we have $B^p y = B^p g'x$ if and only if $y = h g' g^{-1} y$ for some $h \in B^p$, i.e., 
if and only if $g' \in B^p \Gamma_y g$.
Therefore, the fibers each have size
\[
|B^p \backslash B^p \Gamma_y g| =
|B^p \backslash B^p \Gamma_y| = |B^p \cap \Gamma_y \backslash \Gamma_y| = [\Gamma_y : B^p_y],
\]
proving \eqref{eq:eta-identity}.

To conclude, we show that the left side of \eqref{eqn_shintani_bst_3} is 
\begin{equation}\label{eqn_shintani_bst_4}
\xi^\pm(s, \Psi_{p^2} \otimes \Phi_M) - p^{-4s}  \xi^\pm(s, \Phi_M) + p^{-4s} \xi^\pm(s, \eta_p \otimes \Phi_M).
\end{equation}

For those $x \in X_p$ not in $p V(\Z)$, it is readily checked that $\Gamma_x = B_{p, x}$, so that the contribution of such $x$
is equal to
$\xi^\pm(s, \Psi_{p^2} \otimes \Phi_M) - p^{-4s} \xi^\pm(s, \Phi_M)$
by Proposition \ref{def_up}.
The contribution from those $x \in X_p\cap p V(\Z)$ is
\begin{equation}\label{eqn_shintani_bst_5}
p^{-4s} \sum_{\substack{x \in B_p \backslash Y_p\\ \pm\Disc(x)>0}} \frac{1}{|B_{p,x}|} \Phi_M(x) |\Disc(x)|^{-s},
\end{equation}
where $Y_p=\{(x_1,x_2,x_3,x_4)\in V(\Z): \ x_4 \equiv 0 \pmod{p}\}$.
Let $\gamma=
\left(\begin{smallmatrix}0&1\\1&0\end{smallmatrix}\right)\in\Gamma$.
Then since $Y^p=\gamma Y_p$ and $B^p=\gamma B_p\gamma^{-1}$,
 the bijection $Y_p\ni x\mapsto\gamma x\in Y^p$
induces the bijection $B_p\backslash Y_p\rightarrow B^p\backslash Y^p$,
and the sum in \eqref{eqn_shintani_bst_5} coincides with the sum
in the right side of \eqref{eqn_shintani_bst_3}.
Thus the contribution \eqref{eqn_shintani_bst_5}
is $p^{-4s}\xi^\pm(s, \eta_p \otimes \Phi_M)$. 
\end{proof}

To give another proof of Theorem \ref{thm_rc}, we again apply Landau's method and treat the error terms
$E_1$, $E_2$, and $E_3$ from \eqref{eq:main_term}. To avoid $O(X^{\epsilon})$ factors in the error terms,
we recall several standard bounds from analytic number theory.
For any fixed $C\geq1$, we have
\begin{equation}\label{eq:IK}
\sum_{q \leq Q} \mu^2(q) C^{\omega(q)} \ll Q (\log Q)^{C - 1},
\end{equation}
which can be proved (for example) as in \cite[p.~24]{IK}. We also have
\begin{equation}\label{eq:IK2}
\sum_{q > Q} \mu^2(q) \frac{C^{\omega(q)}}{q^\alpha} \ll \frac{(\log Q)^{C - 1}}{Q^{\alpha - 1}}
\end{equation}
for each fixed $\alpha > 1$ and $C\geq1$, as can be seen by subdividing the interval $(Q, \infty)$ into dyadic subintervals and applying 
\eqref{eq:IK} to each. Finally, we have
\begin{equation}\label{eq:gronwall}
\prod_{p \mid n} \Big( 1 + \frac{1}{p} \Big) \ll \log \log n
\end{equation}
as a consequence of Gronwall's classical bound $\sigma(n) \ll n \log \log n$ \cite{gronwall}.

We now return to \eqref{eq:main_term}
with $M=1$.
By \eqref{eq:res_nmax} and \eqref{eq:IK2}, we obtain
\[
E_1 \ll X \sum_{q > Q} \prod_{p \mid q} \big( 2 p^{-2} \big) + X^{5/6} \sum_{q > Q} \prod_{p \mid q} \big( 2 p^{-5/3} \big)
\ll 
\frac{X (\log Q)}{Q} + \frac{X^{5/6} (\log Q)}{Q^{2/3}}.
\]
For $E_3$,
by using Proposition \ref{prop:tail_nonmax}
in place of Proposition \ref{prop:tail_no_eps},
we have
\[
E_3 \ll X \sum_{q > Q} 3^{\omega(q)} q^{-2} \ll \frac{X (\log Q)^2}{Q}.
\]

To treat the $E_2$ error term,
we apply Theorem \ref{thm:landau} (instead of 
Theorem \ref{thm:landau_average})
to each Shintani zeta function
in the summation in \eqref{eqn_shintani_bst_2} with $M = 1$.
This gives
\begin{equation}
E_2 := X^{3/5} 
\sum_{q \leq Q} 
\sum_{k l m = q}
(q^{-4} k^2)^{3/5} \big(\delta_1(\eta_{kl}) \big)^{3/5}  \big(\widehat{\delta_1}(\eta_{kl}) \big)^{2/5}.
\end{equation}
We have $\delta_1(\eta_{kl}) \ll 2^{\omega(kl)}$ by \eqref{eq:res_roots}, and by definition we have
\[
\widehat{\delta_1}(\eta_{kl}) 
\leq (kl)^4 \sup_N \frac{1}{N}\sum_{\alpha \in\{\pm\}} \sum_{n < N} a^{\alpha}(|\widehat{\eta_{kl}}|, n).
\]
We will prove, for each 
squarefree $d$ and $N > 1$, that
\begin{equation}\label{eqn:dyadic1}
\sum_{\alpha \in\{\pm\}} \sum_{n < N} a^{\alpha}(|\widehat{\eta_d}|, n) \ll N d^{-3} 6^{\omega(d)} \log \log d.
\end{equation}
Granting this for now, we have
\begin{align}
E_2 & \ll X^{3/5} \sum_{q \leq Q} 
\sum_{k l m = q}
(q^{-4} k^2)^{3/5} \big( 2^{\omega(kl)} \big)^{3/5} \big(kl 6^{\omega(kl)} \log \log(kl) \big)^{2/5} \label{eqn:EQ}
\\
&
\leq X^{3/5} \sum_{q \leq Q} q^{-4/5} 2^{\frac{3}{5} \omega(q)} 6^{\frac{2}{5} \omega(q)} (\log \log q)^{2/5}
\sum_{k l m = q}l^{-6/5}m^{-8/5}
\\
& \ll X^{3/5} \sum_{q \leq Q} q^{-4/5} 2^{\frac{3}{5} \omega(q)} 6^{\frac{2}{5} \omega(q)} (\log \log q)^{2/5}
\nonumber
\\
& \ll X^{3/5} Q^{1/5} (\log Q)^{2 \cdot 3^{2/5} - 1} (\log \log Q)^{2/5}, \label{eqn:EQ2}
\end{align}
so that choosing $Q = X^{1/3} (\log X)^\beta$
with $\beta = \frac56 ( 3 - 2\cdot 3^{2/5} ) = -.086\dots$ yields
$E_1 + E_2 + E_3 \ll X^{2/3} (\log X)^\alpha$
for any $\alpha>2-\beta=2.086\dots$,
which implies Theorem \ref{thm_rc}.

It remains to prove \eqref{eqn:dyadic1}. For this, we first recall the following formula for $\widehat{\eta_d}(x)$, first (essentially) proved
by Mori \cite{Mori}, with a second simpler proof given by the second and third authors in
\cite[Proposition 1]{TT_orbital}:

\begin{proposition}\label{thm:Phip}
The function $\widehat{\eta_d}(x)$ is multiplicative in $d$, and for a prime $p \neq 3$ we have
\begin{equation}\label{eq:Psi_fourier}
\widehat{\eta_p}(x)=
\begin{cases}
	1+p^{-1}	& x=0,\\
	p^{-1}		& \text{$x$ has a triple root modulo $p$},\\
		0		& \text{otherwise},
\end{cases}
\end{equation}
where we regard $x$ as a point in $V_{\Z/p\Z}$.

\end{proposition}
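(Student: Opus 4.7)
The plan is to establish the formula via a direct computation of the exponential sum in the Fourier transform definition \eqref{eta_hat}, exploiting the fact that $\eta_p$ decomposes as a sum of characteristic functions indexed by points of $\P^1(\F_p)$.

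First I would handle multiplicativity. By the Chinese Remainder Theorem, if $\gcd(d_1,d_2)=1$ then $V(\Z/d_1 d_2 \Z) \cong V(\Z/d_1\Z) \times V(\Z/d_2\Z)$ and $\P^1(\Z/d_1 d_2 \Z) \cong \P^1(\Z/d_1\Z) \times \P^1(\Z/d_2\Z)$, so that counting roots is multiplicative: $\eta_{d_1 d_2}(y_1,y_2) = \eta_{d_1}(y_1)\,\eta_{d_2}(y_2)$. Multiplicativity of $\widehat{\eta_d}$ then follows formally from the product decomposition of the additive character in \eqref{eta_hat}.

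Next I would fix a prime $p \neq 3$ and write $\eta_p(y) = \sum_{t \in \P^1(\F_p)} \mathbf{1}[y(t)=0]$. Substituting this into \eqref{eta_hat} and swapping the order of summation yields
\[
\widehat{\eta_p}(x) = \frac{1}{p^4} \sum_{t \in \P^1(\F_p)} \sum_{y \in W_t} \exp\!\left(2\pi i\cdot\frac{[x,y]}{p}\right),
\]
where $W_t := \{y \in V(\F_p) : y(t)=0\}$ is a three-dimensional $\F_p$-subspace of $V(\F_p)$. The inner sum is a character sum over a linear subspace, and equals $p^3$ if the functional $y \mapsto [x,y]$ vanishes identically on $W_t$, and $0$ otherwise. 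Hence
\[
\widehat{\eta_p}(x) = \frac{1}{p} \cdot \#\{t \in \P^1(\F_p) : x \in W_t^\perp\},
\]
where $W_t^\perp$ denotes the annihilator of $W_t$ under the pairing $[\cdot,\cdot]$.

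The heart of the argument is then to identify $W_t^\perp$. I would carry out the computation at the base point $t_0 = (1:0)$, where $W_{t_0}$ consists of forms with vanishing $u^3$-coefficient, and verify directly from the pairing $[\cdot,\cdot]$ and the embedding $\iota : V^*(\Z) \hookrightarrow V(\Z)$ of Section \ref{sec:shintani_zeta} that $W_{t_0}^\perp$ is the one-dimensional subspace spanned by $v^3$, the (unique up to scalar) cubic form with triple root at $t_0$. Here the hypothesis $p \neq 3$ is essential: it ensures that $\iota$ reduces to an isomorphism $V^*(\F_p) \xrightarrow{\sim} V(\F_p)$, so that the annihilator has the expected dimension one. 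By the $\GL_2(\F_p)$-equivariance of both $[\cdot,\cdot]$ and the decomposition $y \mapsto \sum_t \mathbf{1}[y(t)=0]$, this identification extends to every $t \in \P^1(\F_p)$. Combining these ingredients, $\widehat{\eta_p}(x)$ just counts (with the prefactor $1/p$) the triple roots of $x$: for $x=0$ every one of the $p+1$ points contributes, giving $1+p^{-1}$; for $x \neq 0$ with a triple root (necessarily unique), exactly one $t$ contributes, giving $p^{-1}$; otherwise the sum is empty. The main obstacle in practice is the clean verification at $t_0$, which requires keeping careful track of the normalizations of $[\cdot,\cdot]$ and $\iota$, but once these conventions are pinned down it reduces to a short direct calculation.
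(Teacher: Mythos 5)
The paper does not prove Proposition~\ref{thm:Phip} in-house; it quotes the result from Mori and from \cite[Proposition~1]{TT_orbital}, so there is no internal argument to compare yours against. That said, your argument is correct and complete in outline, and is in the spirit of the ``simpler proof'' of \cite{TT_orbital}: decompose $\eta_p=\sum_{t\in\P^1(\F_p)}\mathbf{1}_{W_t}$ as a sum of indicators of the hyperplanes $W_t=\{y:y(t)=0\}$, observe that $\widehat{\mathbf{1}_{W_t}}=p^{-1}\mathbf{1}_{W_t^\perp}$ by orthogonality of characters on a subspace, identify the line $W_t^\perp$ with the span of the cube of the linear form vanishing at $t$, and count. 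The final tally ($p+1$ contributing $t$ for $x=0$; exactly one for a nonzero $x$ with a triple root, using that a nonzero cubic has at most one triple root; none otherwise) gives exactly \eqref{eq:Psi_fourier}.

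Two small points to tighten. First, the verification at $t_0=(1:0)$ is asserted rather than carried out, and this is precisely where the conventions for $[\cdot,\cdot]$ and $\iota$ enter; you should write out the computation that the annihilator of $\{y:y_1=0\}$ is spanned (after applying $\iota$, which mod $p$ is an isomorphism since $p\neq3$) by $v^3$. Once the pairing is pinned down this is a one-line check, but it is the one spot where a normalization slip would derail everything. Second, a minor misstatement: the annihilator of a three-dimensional subspace of a four-dimensional $\F_p$-space is one-dimensional for every $p$, so $p\neq3$ is not needed for the dimension count; its role is solely that $\iota$ is an isomorphism mod $p$, which lets you phrase the answer as a triple-root condition on $x\in V(\F_p)$ rather than on its preimage in $V^*(\F_p)$. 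With those points addressed, this is a clean and self-contained proof of the cited statement.
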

We now prove \eqref{eqn:dyadic1}.
Replacing $d$ by $d/3$ if necessary, we may assume that $d$ is coprime to~3.
We then sum over all factorizations $d = d_1 d_2$, and consider the contribution
of 
those $x$ which reduce to zero in $\Z/d_1\Z$ and which 
have a triple root in $\Z/p\Z$ for each $p\mid d_2$. 

By Proposition \ref{prop:tail_no_eps}, the number of such 
$x$ with $|\Disc(x)|<N$
is \smash{$\ll \frac{N 6^{\omega(d_2)}}{d_1^4 d_2^2}$}, and so
\begin{align*}
\sum_{|\Disc(x)| < N} |\widehat{\eta}_d(x)|
\ll & \sum_{d = d_1 d_2} \bigg(  \prod_{p | d_1} (1 + p^{-1})  \prod_{p | d_2} p^{-1} \bigg) \frac{N 6^{\omega(d_2)}}{d_1^4 d_2^2} \\
= & \ N d^{-3} 6^{\omega(d)} \prod_{p \mid d} \left( 1 + \frac{1}{6} \cdot \frac{1 + p^{-1}}{p} \right) \\
\ll & \ N d^{-3} 6^{\omega(d)} \log \log d,
\end{align*}
completing the proof of Theorem \ref{thm_rc}.

\section{Counting quadratic fields}\label{appendix:count_quadratic}

In this section, we prove Proposition \ref{lem:quad_count} which estimates the number
of quadratic fields of bounded discriminant
satisfying a prescribed set of local conditions.
As we explain, this amounts to counting
squarefree integers in $[1, X]$ in suitable arithmetic
progressions. 

We begin by formulating our results precisely.
Recall that a {\itshape local specification} $\Sigma_p$ at a prime~$p$ is a subset 
of the cubic \'etale algebras over $\Q_p$;
we say that a cubic field $F$ satisfies
the local specification $\Sigma_p$ if $F \otimes_\Q \Q_p \in \Sigma_p$,
and that a quadratic field $F_2$ satisfies 
the local specification $\Sigma_p$ if the algebra $F_2 \times \Q$ does.

For quadratic fields, we may assume without loss of generality that $\Sigma_p$ contains
algebras only of the form 
$(F_2)_p \times \Q_p$, where $(F_2)_p$ is either $\Q_p \times \Q_p$
or a quadratic field
extension of $\Q_p$. In this section, we abuse notation and also write $\Sigma_p$ for the set of 
$(F_2)_p$ with $(F_2)_p \times \Q_p \in \Sigma_p$.

Let $p\neq2$ and let $l$ be an arbitrary non-square element
in $\Z_p^\times$. Then it is immediately checked that the left column of the following table
enumerates the possibilities for $(F_2)_p$:

\begin{center}
\begin{tabular}{| c | c| c |}\hline
$(F_2)_p$ & Condition on $d = \Disc(F_2)$ & Density $c_p$ \\ \hline \hline
$\Q_p \times \Q_p$ & $(\frac{d}{p})= 1$ & 
$\frac{1}{2} (1 + p^{-1})^{-1}$  \\[.025in] \hline
$\Q_p(\sqrt{l})$ &  
$(\frac{d}{p})= -1$  &$\frac{1}{2} (1 + p^{-1})^{-1}$ \\[.025in]\hline
$\Q_p(\sqrt{p})$ &  
$(\frac{d/p}{p})= 1$ 
& $\frac{1}{2} (p + 1)^{-1}$  \\[.025in]\hline
$\Q_p(\sqrt{lp})$ & 
$(\frac{d/p}{p})= -1$ & $\frac{1}{2} (p + 1)^{-1}$ \\ \hline
\end{tabular}
\end{center}
For each $(F_2)_p$, we write $\varepsilon_p \in \{\pm 1\}$ for
the value of the Legendre symbol in the second column.
Note that $F_2 \otimes_\Q \Q_p$ is determined by the
value of $\Disc(F_2) \!\pmod{p^2}$, and by
$\Disc(F_2) \!\pmod{p}$ when $F_2$ is unramified.

If $p = 2$, then there are eight possibilities for $(F_2)_2$;
with the exception of the first row, the following table lists a generating
polynomial for $(F_2)_2$:
\begin{center}
\begin{tabular}{ |c | c| c|}\hline
$(F_2)_2$ & Condition on $d = \Disc(F_2)$ & Density $c_2$ \\ \hline\hline
$\Q_2 \times \Q_2$ & $d \equiv 1 \pmod{8}$ & 
$1/3$  \\[.025in]\hline
$x^2 - x + 1$ &  
$d \equiv 5 \pmod{8}$ &
$1/3$ 
\\[.025in]\hline
$x^2 + 2x \pm 2$ &  
$d \equiv 4 \mp 8 \pmod{32}$
& $1/12$  \\[.025in]\hline
$x^2 + a \ (a = \pm 2, \pm 6)$ & 
$d \equiv -4a \pmod{64}$ 
& $1/24$ \\ \hline
\end{tabular}
\end{center}
To verify this, one checks that: the polynomials
in the left column satisfy the discriminant
conditions in the middle column; these 
conditions are mutually
exclusive; 
any $\alpha \in \Q_2^{\times}$ is one of the $d$ listed above
times a nonzero $2$-adic square;
and these conditions cover all residue
classes (mod $64$) that can be discriminants
of quadratic fields.

Thus the quadratic fields enumerated by $N_2^\pm(X, \Sigma)$ are in bijection with
fundamental discriminants that satisfy appropriate congruence conditions.
To distinguish among the possibilities for $F_2 \otimes_\Q \Q_2$,
we assume a congruence condition of the form $\Disc(F_2) \equiv a \ (\textmod \ 64)$.
Hereafter we assume that $\Sigma$ is a collection of local specifications over odd primes only,
and write $N_2^{\pm}(X; a, \Sigma)$ for the count of fields counted by $N_2^\pm(X, \Sigma)$
with $\Disc(F_2) \equiv a \ (\textmod \ 64)$.

We assume that 
for sufficiently large primes $p$, the local specification 
$\Sigma_p$ consists of all quadratic \'etale $\Q_p$-algebras,
and hence imposes no condition on $F_2$. At each remaining odd prime~$p$, we assume that 
either (1) $\Sigma_p$ consists of a single algebra, or
(2) $\Sigma_p = \{ \Q_p(\sqrt{p}), \Q_p(\sqrt{lp}) \}$,
in which case $F_2$ satisfies the local specification $\Sigma_p$ if and only if $p$ ramifies in $F_2$.
For $\Sigma=(\Sigma_p)_p$,
we write: $u$ for the product of primes $p$ where $\Sigma_p$
consists of a single unramified algebra;
$s$ for the product of primes~$p$ where $\Sigma_p$ consists of a single ramified algebra;
and $r$ for the product of primes $p$ where
$\Sigma_p = \{ \Q_p(\sqrt{p}), \Q_p(\sqrt{lp}) \}$.
Then 
\begin{equation}\label{eq:quad-counting-interpretation}
N_2^\pm(X;a,\Sigma)
=
\#
\left\{
	d\in\Z
	\ \vrule \ 
\begin{array}{l}
	0<\pm d<X\\
	d\equiv a\!\!\!\!\pmod{64}\\
	\text{$p\nmid d$, $(\frac dp)=\varepsilon_p$
					for all $p\mid u$}\\	
	\text{$p\parallel d$, $(\frac{d/p}p)=\varepsilon_p$
					for all $p\mid s$}\\	
	\text{$p\parallel d$				for all $p\mid r$}\\	
	\text{$d$ is not divisible by $p^2$ for any $p\nmid 2usr$}\\
\end{array}
\right\},
\end{equation}
where 
for each $p\mid us$,
$\varepsilon_p\in\{\pm1\}$
is chosen depending on $\Sigma_p$.
To count this, define
\begin{align*}
N^\pm(X;a,\Sigma;q)
:= & \
\#
\left\{
	d\in\Z
	\ \vrule \ 
\begin{array}{l}
	0<\pm d<X\\
	d\equiv a\!\!\!\!\pmod{64}\\
	\text{$d$ is divisible by $srq^2$ and $(d/r, r) = 1$}\\
	p \nmid d, \text{$(\frac dp)=\varepsilon_p$
					for all $p\mid u$}\\	
	p \parallel d, \text{$(\frac{d/p}p)=\varepsilon_p$
					for all $p\mid s$}\\	
\end{array}
\right\}
\end{align*}
for each squarefree integer $q$ coprime to $2sur$.
Then by inclusion-exclusion, we have
\begin{equation}\label{eq:quad-inc-exc}
N_2^\pm(X;a,\Sigma)
=\sum_q
\mu(q)N^\pm(X;a,\Sigma;q),
\end{equation}
where $q$ runs through all squarefree integers coprime to $2usr$.
\begin{lemma}
We have
\begin{align} \label{eq:quad_ie_pv}
N^\pm(X;a,\Sigma;q)
&=
\frac{1}{64}
\prod_{p \mid us} \frac{1 - p^{-1}}{2}
\prod_{p \mid r} \big(1 - p^{-1}\big)
\frac{X}{srq^2}
+
O\big(2^{\omega(r)} \sqrt{us} \log(us)\big), \\ \label{eq:quad_ie_trivial}
N^\pm(X;a,\Sigma;q)
&=O\left(\frac{X}{srq^2}\right).
\end{align}
\end{lemma}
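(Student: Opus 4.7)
The second bound \eqref{eq:quad_ie_trivial} is immediate: after dropping all conditions except $srq^2\mid d$ and $0<\pm d<X$, one is counting integers in an interval of length $X$ that are divisible by $srq^2$, which gives $O(X/(srq^2))$ (and the stated bound holds \emph{a fortiori} when $srq^2>X$).

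For \eqref{eq:quad_ie_pv}, I would translate the count into a character-sum problem. Substituting $d=srq^2 f$, the conditions on $d$ become: $0<\pm f<Y:=X/(srq^2)$; a congruence $f\equiv a'\pmod{64}$ for some $a'\in(\Z/64\Z)^\times$ (using that $srq^2$ is odd); coprimality $(f,r)=1$ (from $(d/r,r)=1$); and Legendre-symbol constraints $(f/p)=\varepsilon_p'\in\{\pm1\}$ for each $p\mid us$, determined by $\Sigma_p$ and the residue of the appropriate factor of $srq^2$ modulo $p$. Using the identity $\mathbf{1}[(f/p)=\varepsilon]=\tfrac12((f/p)^2+\varepsilon(f/p))$ (whose virtue is that $(f/p)^2=\mathbf{1}[p\nmid f]$ automatically encodes coprimality) and expanding over $p\mid us$, then combining with $\mathbf{1}[(f,r)=1]$ via M\"obius inversion, I would reach
\begin{equation*}
N^\pm(X;a,\Sigma;q)=\frac{1}{2^{\omega(us)}}\sum_{n\mid us}\epsilon_n\sum_{e\mid usr/n}\mu(e)\chi_n(e)\,S_{n,e},
\end{equation*}
where $\chi_n(\cdot):=\prod_{p\mid n}(\cdot/p)$, $\epsilon_n:=\prod_{p\mid n}\varepsilon_p'$, and
\begin{equation*}
S_{n,e}:=\sum_{\substack{0<\pm g<Y/e\\ g\equiv c_e\!\pmod{64}}}\chi_n(g),
\end{equation*}
for suitable $c_e\in(\Z/64\Z)^\times$ obtained from $a'/e$.

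The trivial character $n=1$ contributes $\sum_{e\mid usr}\mu(e)(Y/(64e)+O(1))=(Y/64)\prod_{p\mid usr}(1-p^{-1})+O(2^{\omega(usr)})$; dividing by $2^{\omega(us)}$ and splitting the product by primes dividing $us$ versus $r$ yields exactly the asserted main term with error $O(2^{\omega(r)})$. For $n>1$, I would expand $\mathbf{1}[g\equiv c_e\pmod{64}]$ via orthogonality of characters modulo $64$, reducing $S_{n,e}$ to a combination of $\phi(64)=32$ character sums $\sum_{0<\pm g<Y/e}(\psi\chi_n)(g)$. Since $\chi_n$ is non-principal of odd modulus $n$ and coprime to $64$, each $\psi\chi_n$ is non-principal of modulus dividing $64n$, and the P\'olya--Vinogradov inequality gives $|S_{n,e}|\ll\sqrt{n}\log n$. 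Summing over $e\mid usr/n$ and $n\mid us$, and using the elementary identity $\sum_{n\mid us}\sqrt{n}/2^{\omega(n)}=\prod_{p\mid us}(1+\sqrt p/2)\ll\sqrt{us}$ (bounded uniformly because $(1+\sqrt p/2)/\sqrt p\leq 1$ for all primes $p\geq5$ and bounded by an absolute constant at $p=3$), the resulting error becomes
\begin{equation*}
\ll\frac{\log(us)}{2^{\omega(us)}}\cdot 2^{\omega(usr)}\sqrt{us}=2^{\omega(r)}\sqrt{us}\log(us),
\end{equation*}
as claimed.

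The main obstacle is the combinatorial bookkeeping: verifying that the characters $\chi_n$ for $n>1$ are genuinely non-principal (requiring $n$ squarefree, odd, and $>1$) and that the constant $\prod_{p\mid us}(1-p^{-1})/2$ drops out cleanly from the $n=1$ term. The identity $\tfrac12((f/p)^2+\varepsilon(f/p))$, in place of the naive $\tfrac12(1+\varepsilon(f/p))$, is the critical technical device that bundles the implicit coprimality at primes $p\mid us$ directly into the character expansion, avoiding an extra layer of sieving that would weaken the error bound.
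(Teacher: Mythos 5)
Your approach is the same as the paper's in all essentials: substitute $d=srq^2 f$, expand the Legendre-symbol and coprimality conditions into quadratic/principal characters, isolate the trivial-character term for the main term, and bound the remaining character sums by P\'olya--Vinogradov. The one cosmetic difference is that you clear out the coprimality condition to $usr/n$ via M\"obius inversion and then apply P\'olya--Vinogradov to (nearly) primitive characters, whereas the paper keeps the principal character $\chi_{0,usr/m}$ inside and invokes the imprimitive form of P\'olya--Vinogradov. Both routes deliver the same bound $\ll 2^{\omega(r)}\sqrt{us}\log(us)$.

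There is, however, a genuine (if easily patched) gap. You assert that the congruence becomes $f\equiv a'\pmod{64}$ ``for some $a'\in(\Z/64\Z)^\times$.'' That is false for the six conditions at $p=2$ where $F_2$ is ramified: in those cases $\Disc(F_2)\equiv a\pmod{64}$ with $4\mid a$, so $a'=a(srq^2)^{-1}$ is even, and your $c_e=a'/e$ is an even residue class. Consequently, the orthogonality identity $\mathbf{1}[g\equiv c_e\pmod{64}]=\frac{1}{32}\sum_{\psi}\overline\psi(c_e)\psi(g)$ you invoke to treat $S_{n,e}$ for $n>1$ fails outright: Dirichlet characters modulo $64$ only detect unit residue classes, and the right-hand side vanishes identically while the left does not (since $\chi_n$ has odd modulus, $S_{n,e}$ is a genuinely nonzero sum over even $g$). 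The fix is exactly the device the paper uses: set $b=(a',64)$, write $g=bg'$, pull $\chi_n(b)$ out, and apply orthogonality modulo $64/b$; all bounds survive with only a bounded multiplicative loss. With that correction your argument is sound and reproduces the lemma.
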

\begin{proof}
The estimate \eqref{eq:quad_ie_trivial} is immediate. To prove \eqref{eq:quad_ie_pv},
note that 
by definition we have
\begin{equation*}
N^\pm(X;a,\Sigma;q)
=  \
\#
\left\{
	n\in\Z
	\ \vrule \ 
\begin{array}{l}
	0<\pm n<X/srq^2\\
	n\equiv a' \!\!\!\!\pmod{64}\\
	\text{$(\frac np)=\varepsilon_p'$
					for all $p\mid us$} \\ (n, r) = 1\\	
\end{array}
\right\},
\end{equation*}
where $a' := a(sr)^{-1}q^{-2} \!\pmod{64}$ and
\begin{equation}
\varepsilon_p' :=
\begin{cases}
(\frac {rs}{p})
\varepsilon_p
& \textnormal{ if } p \mid u, \\ 
(\frac{rs/p}p)
\varepsilon_p
& \textnormal{ if } p \mid s.
\end{cases}
\end{equation}
Expanding the conditions on $n$ in terms
of Dirichlet characters, and writing
$a' = bb'$ with $b := (a', 64)$, we obtain 
\begin{align*}
N^\pm(X;a,\Sigma;q)
= &
\sum_{\substack{0< n < \frac{X}{bsrq^2} \\ (n, 2usr) = 1}}
\frac{b}{32} 
\sum_{\chi_2 \,(\textmod{\,\frac{64}{b})}} \overline{\chi_2}(b')
\chi_2(\pm n)
\prod_{p \mid us}
\bigg(\frac{1 + \varepsilon_p' \chi_p(\pm n b)}{2}
\bigg) \\
= &\,
\frac{b}{2^{5 + \omega(us)}}\!\!\!\!\!
\sum_{\chi_2 \,(\textmod{\,\frac{64}{b})}} \!\!\!\!\!
\overline{\chi_2}(b')
\chi_2(\pm 1)
\sum_{m \mid us}
\varepsilon_m' \chi_m(\pm b)
\!\!\!\!\!\sum_{0< n < \frac{X}{bsrq^2}}
\!\!\!\!\!\chi_2(n) \chi_m(n)
\chi_{0, usr/m}(n)
\end{align*}
where 
$\varepsilon_m' := \prod_{p \mid m} \varepsilon_p'$,
$\chi_m := \prod_{p \mid m} \big(\frac{ \cdot }{ p } \big),$ $\chi_2$ runs over all characters $($mod ${\frac{64}{b}})$, and $\chi_{0, usr/m}$ is the principal
character $(\textmod \ usr/m)$. 

By the P\'olya-Vinogradov inequality (for imprimitive characters; 
see, e.g., \cite[Chapter~23]{davenport_book}), the
innermost sum is $\ll 2^{\omega(usr)} \sqrt{us} \log(us)$ except
when $\chi_2$ is principal and
$m = 1$.
Therefore, the expression above simplifies
to
\begin{align*}
N^\pm(X;a,\Sigma;q)
& = 
O\big(2^{\omega(r)} \sqrt{us} \log(us)\big)
+ \frac{b}{2^{5 + \omega(us)}}
\sum_{\substack{0< n < X/bsrq^2 \\ (n, 2usr) = 1}}
1 \\
& = 
O\big(2^{\omega(r)} \sqrt{us} \log(us) \big)
+ \frac{b}{2^{5 + \omega(us)}}
\cdot \frac{ \phi(2usr)}{2usr} \cdot
\frac{X}{bsrq^2}
\\
& = 
O\big(2^{\omega(r)} \sqrt{us} \log(us)\big)
+ \frac{1}{64}
\prod_{p \mid us} \frac{1 - p^{-1}}{2}  \prod_{p \mid r} \big( 1 - p^{-1} \big)
\frac{X}{srq^2}.
\end{align*}
Note that $\#\{0<n<X\mid (n,r)=1\}=\frac{\phi(r)}rX+O(2^{\omega(r)})$.
\end{proof}

\begin{proof}[Proof of Proposition~$\ref{lem:quad_count}$] 
We decompose $N^{\pm}_2(X, \Sigma)$ into 
$O(U^{\epsilon})$ counts $N^{\pm}_2(X; a, \Sigma)$, where $\Sigma$
satisfies the assumptions stated before
\eqref{eq:quad-counting-interpretation}.
By \eqref{eq:quad-inc-exc}, we then have for any $Q$ that
{\small
\begin{align*}
N_2^\pm(X; a, \Sigma) & = 
\frac{1}{64}
\prod_{p \mid us} \frac{1 - p^{-1}}{2}
\prod_{p \mid r} \big( 1 - p^{-1} \big) 
\!\!\!\!\!\!\sum_{\substack{q \leq Q \\ (q, 2usr) = 1}}
\!\!\!\!\!\frac{\mu(q)}{q^2}
\frac{X}{sr}
+
O\big(Q \cdot 2^{\omega(r)} \sqrt{us} \log(us)\big)
+ \sum_{q > Q} O\Bigl( \frac{X}{srq^2} \Bigr) \\
& = 
\frac{1}{64}
\prod_{p \mid us} \frac{1 - p^{-1}}{2}
\prod_{p \mid r} \big( 1 - p^{-1} \big)
\!\prod_{p \nmid 2usr}\! \big(1 - p^{-2} \big)
\frac{X}{sr}
+
O\bigg(Q \cdot 2^{\omega(r)} \sqrt{us} \log(us) + \frac{X}{srQ} \bigg).
\end{align*}
}Taking $Q = X^{1/2} u^{-1/4} s^{-3/4} r^{-1/2}$, we obtain an error term of 
$O(X^{1/2} u^{1/4} s^{-1/4} r^{-1/2 + \epsilon} \log(us))$. 
This yields Proposition \ref{lem:quad_count}, with 
the densities $c_p$  
corresponding to the main term above.
\end{proof}

\section*{Acknowledgments}
The idea for this paper was conceived
at the Introductory Workshop on Arithmetic Statistics at MSRI.
We would like to thank the workshop organizers (Barry Mazur, Carl Pomerance, and Mike Rubinstein) 
for bringing us together, and for an outstanding conference. We then made further progress at the workshop on
Recent Developments in Analytic Number Theory, a short six years later. We would like to thank MSRI twice over for bringing us
together, as well as the National Science Foundation
(via Grants DMS-0932078 and DMS-1440140) for its financial support of MSRI.

We would like to thank Daniel Fiorilli, Kevin McGown, Harsh Mehta, Evan O'Dorney, Arul Shankar, Keiju Sono, and an anonymous referee for helpful comments on this paper in particular. 
Most of all, we would also like to thank the very many other researchers with whom we have
enjoyed stimulating conversations on the Davenport--Heilbronn theorems.

M.B.\ was partially supported by a Simons
Investigator Grant and NSF Grant DMS-1001828; 
T.T.\ was supported by the JSPS, KAKENHI Grants JP24654005, JP25707002, JP16K13747 and JP17H02835, and JSPS Postdoctoral Fellowship for Research Abroad.
F.T.\ was partially supported by the National Science Foundation under Grants
No. DMS-0802967 and DMS-1201330, by the National Security Agency under Grant
H98230-16-1-0051, and by grants from the Simons Foundation (Nos.~563234 and~586594).

\section*{Legal Disclaimers}

{\small
No data was generated or analyzed as part of the writing of this paper. 
  
\medskip  
On behalf of all authors, the corresponding author states that there is no conflict of interest.  
  }

\bibliographystyle{alpha}
\bibliography{btt}  
\end{document}